        \titleformat{\section}{\normalfont\large\bf}{\thesection.}{1ex}{\centering}
        \titleformat{\subsection}[runin]{\normalfont\bf}{\thesubsection.}{1ex}{}[.]
    \theoremstyle{plain}
        \newtheorem{theorem}{Theorem}[section]
        \newtheorem{lemma}[theorem]{Lemma}
        \newtheorem{proposition}[theorem]{Proposition}
        \newtheorem{corollary}[theorem]{Corollary}
    \theoremstyle{definition}
    \theoremstyle{remark}
        \newtheorem{remark}[theorem]{Remark}
    \DeclareMathOperator{\supp}{supp}
    \numberwithin{equation}{section}
\begin{document}
\begin{center}\large
    {\bf Properties of Navier-Stokes mild solutions in sub-critical Besov \\ spaces  whose regularity exceeds the critical value by $\boldsymbol{\epsilon\in(0,1)}$}\\\ \\
    Joseph P.\ Davies\footnote{University of Sussex, Brighton, UK, {\em jd535@sussex.ac.uk}} and Gabriel S.\ Koch\footnote{University of Sussex, Brighton, UK, {\em
    g.koch@sussex.ac.uk}}
\end{center}
\abstract{
We consider mild solutions to the Navier-Stokes initial-value problem which belong to certain ranges $Z_{p,q}^{s}(T,n):=\widetilde{L}^{1}(0,T;\dot{B}_{p,q}^{s+2}(\mathbb{R}^{n}))\cap\widetilde{L}^{\infty}(0,T;\dot{B}_{p,q}^{s}(\mathbb{R}^{n}))$ of Chemin-Lerner spaces. For $\epsilon\in(0,1)$, $s\in(-1,\infty)$, $p,q\in[1,\infty]$, and initial data $f\in\dot{B}_{p,q}^{s}(\mathbb{R}^{n})\cap\dot{B}_{\infty,\infty}^{-1+\epsilon}(\mathbb{R}^{n})$, we prove that there exists, at least locally in time, a unique solution $u\in\cap_{T'\in(0,T)}\left(Z_{p,q}^{s}(T',n)\cap Z_{\infty,\infty}^{-1+\epsilon}(T',n)\right)$. The solution $u$ and its maximal existence time $T_{f}^{*}\in(0,\infty]$ are independent of $\epsilon,s,p,q$. If $T_{f}^{*}$ is finite, then we have the blow-up estimate ${\|u(t)\|}_{\dot{B}_{\infty,\infty}^{-1+\epsilon}(\mathbb{R}^{n})}\gtrsim_{\varphi}\epsilon(1-\epsilon){(T_{f}^{*}-t)}^{-\epsilon/2}$ for all $t\in(0,T_{f}^{*})$, where $\varphi$ is the cutoff function used to define the Littlewood-Paley projections $\dot{\Delta}_{j}:=\varphi(2^{-j}D)$. The solution is unique among all solutions in the larger class $\cap_{T'\in(0,T_{f}^{*})}\cup_{\alpha\in(2,\infty)}L^{\alpha}(0,T';L^{\infty}(\mathbb{R}^{n}))$, and if $T_{f}^{*}<\infty$ then $u\notin L^{2}(0,T_{f}^{*};L^{\infty}(\mathbb{R}^{n}))$.

For $n=3$, $\epsilon\in(0,1)$ and $f\in\dot{B}_{\infty,\infty}^{-1+\epsilon}(\mathbb{R}^{n})$, we note that Chemin and Gallagher (Tunis. J. \linebreak Math., 2019) construct a solution $u\in\cap_{T'\in(0,T_{f,\epsilon}^{*})}Z_{\infty,\infty}^{-1+\epsilon}(T',n)$ with maximal existence time 
\linebreak 
${T_{f,\epsilon}^{*}\gtrsim_{\varphi,\epsilon}{\|f\|}_{\dot{B}_{\infty,\infty}^{-1+\epsilon}(\mathbb{R}^{n})}^{-2/\epsilon}}$. The results announced above are an improvement on the result by Chemin and Gallagher: we establish certain `propagation of regularity' results to show that the solution and its existence time are indepenent of $\epsilon$, we obtain a more explicit dependence on $\epsilon$ in the implied constants, and we establish additional properties of the solution, depending on the Besov spaces to which the initial data belongs.

We also address measurability issues which receive limited explicit attention in the existing literature.
}
\section{Introduction}
We consider the ``mild'' (or fixed-point) formulation of the Navier-Stokes initial value problem: for $T\in(0,\infty]$ and a tempered distribution $f\in\mathcal{S}'(\mathbb{R}^{n};\mathbb{R}^{n})$, we are interested in weakly* measurable functions\footnote{A function $u:(0,T)\rightarrow\mathcal{D}'(\mathbb{R}^{n})$ is said to be weakly* measurable if the map $t\mapsto\langle u(t),\phi\rangle$ is measurable for all $\phi\in\mathcal{D}(\mathbb{R}^{n})$. We discuss weak* measurability in Section \ref{weak*-measurability} below.} $u:(0,T)\rightarrow\mathcal{S}'(\mathbb{R}^{n};\mathbb{R}^{n})$ which satisfy
\begin{equation*}
    u(t) = S[f](t) - B[u,u](t) \quad \text{in }\mathcal{S}'(\mathbb{R}^{n})\text{ for all }t\in(0,T),
\end{equation*}
where the operators $S$ and $B$ are given by\footnote{We use the notation $\langle v,\phi\rangle$ to denote the action of a distribution $v$ on a test function $\phi$. We also use the notation $\langle \psi,\phi\rangle$ to denote the integral $\int_{\mathbb{R}^{n}}\psi(x)\phi(x)\,\mathrm{d}x$ if $\psi$ and $\phi$ are functions and $\psi\phi$ is integrable. If $v\in\mathcal{D}'(\mathbb{R}^{n})$ and $\psi\in L_{\mathrm{loc}}^{1}(\mathbb{R}^{n})$ satisfy $\langle v,\phi\rangle\equiv\langle\psi,\phi\rangle$, then we use the abuse of notation $v=\psi$.}
\begin{equation*}
    S[f](t) := e^{t\Delta}f,
\end{equation*}
\begin{equation*}
    B[u,v] := G\mathbb{P}\nabla\cdot(u\otimes v),
\end{equation*}
\begin{equation*}
    \langle{(\mathbb{P}\nabla\cdot W)}_{i},\phi\rangle := -\langle W_{jk},\mathbb{P}_{ij}\nabla_{k}\phi\rangle \quad \text{for }\phi\in\mathcal{S}(\mathbb{R}^{n}),
\end{equation*}
\begin{equation*}
    \langle G[w](t),\phi\rangle := \int_{0}^{t}\langle e^{(t-s)\Delta}w(s),\phi\rangle\,\mathrm{d}s \quad \text{for }\phi\in\mathcal{S}(\mathbb{R}^{n}),
\end{equation*}
and $\mathbb{P}_{ij}=\delta_{ij}-\frac{\nabla_{i}\nabla_{j}}{\Delta}$ is the projection onto divergence-free vector fields. In some contexts where the pointwise product $u\otimes v$ or the operator $\mathbb{P}\nabla\cdot$ are not necessarily defined, we replace the bilinear operator $B$ by the operator
\begin{equation*}
    \widetilde{B}[u,v] := G\left(\sum_{j\in\mathbb{Z}}\mathbb{P}\nabla\cdot\dot{\Delta}_{j}\mathsf{Bony}(u,v)\right),
\end{equation*}
where the Littlewood-Paley projections $\dot{\Delta}_{j}:=\varphi(2^{-j}D)$ (for a certain cutoff function $\varphi$) and the Bony product $\mathsf{Bony}(u,v)$ are defined in Section \ref{besov-section} below. If $f$ is divergence-free, then $u$ is too, but we will not use divergence-freedom of $f$ or $u$ at any point in this paper.

We consider the fixed-point problem in the context of the (semi)norms
\begin{equation*}
    {\|u\|}_{L_{T}^{\alpha}L^{p}(\mathbb{R}^{n})} := {\left\|t\mapsto{\|u(t)\|}_{L^{p}(\mathbb{R}^{n})}\right\|}_{L^{\alpha}(0,T)} \quad \text{for }\alpha,p\in[1,\infty]\text{ and }T\in(0,\infty],
\end{equation*}
\begin{equation*}
    {\|u\|}_{\widetilde{L}_{T}^{\alpha}\dot{B}_{p,q}^{s}(\mathbb{R}^{n})} := {\left\|j\mapsto2^{js}{\|\dot{\Delta}_{j}u\|}_{L_{T}^{\alpha}L^{p}(\mathbb{R}^{n})}\right\|}_{l^{q}(\mathbb{Z})} \quad \text{for }\alpha,p,q\in[1,\infty],\text{ }s\in\mathbb{R}\text{ and }T\in(0,\infty].
\end{equation*}
We use the notation $L_{T}^{\overline{\infty}}$ or $\widetilde{L}_{T}^{\overline{\infty}}$ when we want to replace ${\|\cdot\|}_{L^{\infty}(0,T)}$ by the supremum norm. The spaces $\widetilde{L}_{T}^{\alpha}\dot{B}_{p,q}^{s}(\mathbb{R}^{n})$ were first introduced in \cite{chemin1995} (in the case of the space $\widetilde{L}_{T}^{1}\dot{B}_{2,2}^{(n+2)/2}(\mathbb{R}^{n})$), and satisfy the property that if $f$ belongs to the Besov space $\dot{B}_{p,q}^{s}(\mathbb{R}^{n})$, then for all $T\in(0,\infty]$ we have that $S[f]$ belongs to\footnote{The inclusion $\subseteq$ stated here follows from the embeddings \eqref{interpolation-holder} and \eqref{chemin-lerner-minkowski} given below.}
\begin{equation*}
    Z_{p,q}^{s}(T) = Z_{p,q}^{s}(T,n) := \widetilde{L}_{T}^{1}\dot{B}_{p,q}^{s+2}(\mathbb{R}^{n})\cap\widetilde{L}_{T}^{\overline{\infty}}\dot{B}_{p,q}^{s}(\mathbb{R}^{n}) \subseteq L_{T}^{\overline{\infty}}\dot{B}_{p,q}^{s}(\mathbb{R}^{n})\cap\bigcap_{\alpha\in[1,\infty]}\widetilde{L}_{T}^{\alpha}\dot{B}_{p,q}^{s+\frac{2}{\alpha}}(\mathbb{R}^{n}).
\end{equation*}

We can make sense of the fixed-point problem in classes which are more general than $Z_{p,q}^{s}$. For $T\in(0,\infty)$, and $(\alpha,\ell,\epsilon)\in{[1,\infty]}^{2}\times(0,1]$ satisfying\footnote{The inequality $-s_{2}<\epsilon+\frac{2}{\alpha}$ is automatically satisfied in the case $n\geq2$. We choose to not exclude the case $n=1$, because the results of this paper hold not only for the Navier-Stokes equation $\partial_{t}u-\Delta u = -\mathbb{P}\nabla\cdot(u\otimes u)$, but also for any heat equation of the form $\partial_{t}u-\Delta u = \sigma(D)[uu]$, where $\sigma(D)$ is given by a smooth Fourier multiplier which is postive homogeneous of degree 1.}
\begin{equation}\label{ale-conditions}
    \left\{\begin{array}{l} \text{either } s_{\ell}+\epsilon+\frac{2}{\alpha}=0 \text{ and } -s_{2}\leq\epsilon+\frac{2}{\alpha}\leq1, \\ \text{or } s_{\ell}+\epsilon+\frac{2}{\alpha}>0 \text{ and } -s_{2}<\epsilon+\frac{2}{\alpha}<1, \end{array}\right.
\end{equation}
where $s_{\ell}=s_{\ell}(n):=-1+\frac{n}{\ell}$, we consider the path space
\begin{equation*}
    X_{\ell,\epsilon}^{\alpha}(T) = X_{\ell,\epsilon}^{\alpha}(T,n) := \left\{\begin{array}{ll} L_{T}^{\alpha}L^{\ell}(\mathbb{R}^{n}) & \text{if }s_{\ell}+\epsilon+\frac{2}{\alpha}=0, \\ \widetilde{L}_{T}^{\alpha}\dot{B}_{\ell,\infty}^{s_{\ell}+\epsilon+\frac{2}{\alpha}}(\mathbb{R}^{n}) & \text{if } s_{\ell}+\epsilon+\frac{2}{\alpha}>0. \end{array}\right.
\end{equation*}
Writing $p_{\alpha,\epsilon}=p_{\alpha,\epsilon}(n):=n{\left(1-\epsilon-\frac{2}{\alpha}\right)}^{-1}$ (so that $s_{p_{\alpha,\epsilon}}+\epsilon+\frac{2}{\alpha}=0$), the conditions \eqref{ale-conditions} are equivalent to
\begin{equation*}
    \left\{\begin{array}{l} \text{either } \ell=p_{\alpha,\epsilon} \text{ and } 2\leq p_{\alpha,\epsilon}\leq\infty, \\ \text{or } \ell<p_{\alpha,\epsilon} \text{ and } 2<p_{\alpha,\epsilon}<\infty. \end{array}\right.
\end{equation*}
Defining $B_{\alpha,\ell,\epsilon}:=B$ if $\ell=p_{\alpha,\epsilon}$, or $B_{\alpha,\ell,\epsilon}:=\widetilde{B}$ if $\ell<p_{\alpha,\epsilon}$, it makes sense to consider, for $f\in\mathcal{S}'(\mathbb{R}^{n};\mathbb{R}^{n})$, the problem of finding a weakly* measurable function $u:(0,T)\rightarrow\mathcal{S}'(\mathbb{R}^{n};\mathbb{R}^{n})$ with ${\|u\|}_{X_{\ell,\epsilon}^{\alpha}(T)}<\infty$, which satisfies
\begin{equation}\label{besov-uniqueness-fixed-point}
    u(t) = S[f](t) - B_{\alpha,\ell,\epsilon}[u,u](t) \quad \text{in }\mathcal{S}'(\mathbb{R}^{n})\text{ for all }t\in(0,T).
\end{equation}
For assurance that our definitions are reasonable, the following proposition describes a situation in which the bilinear operators $B$ and $\widetilde{B}$ coincide, and describes a situation in which the operator $G\mathbb{P}\nabla\cdot$ coincides with a more classical operator involving the Oseen kernel.
\begin{proposition}\label{lorentz-besov-equiv}
    \begin{enumerate}[label=(\roman*)]
        \item Let $T\in(0,\infty)$, let $(\alpha,\ell,\epsilon)\in{[1,\infty]}^{2}\times(0,1]$ satisfy \eqref{ale-conditions}, let $p=p_{\alpha,\epsilon}$, and let $f\in\dot{B}_{p_{0},\infty}^{s_{0}}(\mathbb{R}^{n};\mathbb{R}^{n})$ for some $p_{0}\in[2,\infty]$ and $s_{0}\in(-1,0)$. Then any solution $u\in X_{\ell,\epsilon}^{\alpha}(T)$ to the equation \eqref{besov-uniqueness-fixed-point} belongs to $\widetilde{L}_{T}^{2}\dot{B}_{p_{0},1}^{0}(\mathbb{R}^{n})+\widetilde{L}_{T}^{\alpha}\dot{B}_{p,1}^{0}(\mathbb{R}^{n})$. Moreover, for any $u,v\in\widetilde{L}_{T}^{2}\dot{B}_{p_{0},1}^{0}(\mathbb{R}^{n})+\widetilde{L}_{T}^{\alpha}\dot{B}_{p,1}^{0}(\mathbb{R}^{n})$ we have $u\otimes v=\mathsf{Bony}(u,v)$ and $\mathbb{P}\nabla\cdot(u\otimes v)=\sum_{j\in\mathbb{Z}}\mathbb{P}\nabla\cdot\dot{\Delta}_{j}(u\otimes v)$.
        \item Let $T\in(0,\infty)$, $p\in[1,\infty]$, and $W\in L_{T}^{1}L^{p}(\mathbb{R}^{n};\mathbb{R}^{n\times n})$. Then $G\mathbb{P}\nabla\cdot W\in L_{T}^{1}L^{p}(\mathbb{R}^{n};\mathbb{R}^{n})$ can be represented by the (almost everywhere defined) measurable function on $(0,T)\times\mathbb{R}^{n}$ given by the expression
        \begin{equation*}
            \int_{0}^{t}\left(\int_{\mathbb{R}^{n}}\nabla_{k}\mathcal{T}_{ij}(t-s,x-y)[W_{jk}(s)](y)\,\mathrm{d}y\right)\mathrm{d}s,
        \end{equation*}
        where $\mathcal{T}_{ij}=\mathbb{P}_{ij}\Phi$ is the Oseen kernel (and $\Phi$ is the heat kernel).
    \end{enumerate}
\end{proposition}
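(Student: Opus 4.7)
My plan is to prove the two parts of Proposition~\ref{lorentz-besov-equiv} separately. For part~(i), the strategy is to use the fixed-point equation~\eqref{besov-uniqueness-fixed-point} to decompose $u=S[f]-B_{\alpha,\ell,\epsilon}[u,u]$ and place each summand in one of the two components of the target sum space; the product identities then follow from convergence of the Bony decomposition to the classical product in a suitable Lebesgue space.

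For $S[f]$, the key is the heat-kernel estimate
\[
\|\dot{\Delta}_{j}S[f]\|_{L_{T}^{2}L^{p_{0}}} \lesssim \min\{T^{1/2},\,2^{-j}\}\,\|\dot{\Delta}_{j}f\|_{L^{p_{0}}} \lesssim \min\{T^{1/2},\,2^{-j}\}\,2^{-js_{0}}\,\|f\|_{\dot{B}_{p_{0},\infty}^{s_{0}}},
\]
after which one sums in $j$, splitting at $2^{2j}\sim T^{-1}$: the low-frequency part converges because $s_{0}<0$ and the high-frequency part because $s_{0}>-1$, with both contributing on the order of $T^{(1+s_{0})/2}\|f\|_{\dot{B}_{p_{0},\infty}^{s_{0}}}$. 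This places $S[f]\in\widetilde{L}_{T}^{2}\dot{B}_{p_{0},1}^{0}$ and is where the hypothesis $s_{0}\in(-1,0)$ enters. For the bilinear term, I would apply a Chemin-Lerner paraproduct bound on $u\otimes u$ (or on $\mathsf{Bony}(u,u)$ in the regime $\ell<p_{\alpha,\epsilon}$), combined with the two-derivative smoothing of $G$ and the one-derivative loss from $\mathbb{P}\nabla\cdot$, to obtain
\[
\|B_{\alpha,\ell,\epsilon}[u,u]\|_{\widetilde{L}_{T}^{\alpha}\dot{B}_{p,1}^{0}} \lesssim \|u\|_{X_{\ell,\epsilon}^{\alpha}(T)}^{2},
\]
the parameter relations in~\eqref{ale-conditions} being precisely what make the scaling close in both the classical ($\ell=p_{\alpha,\epsilon}$) and paraproduct ($\ell<p_{\alpha,\epsilon}$) regimes. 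Given both inclusions, $u$ belongs to the stated sum space; via the embeddings $\dot{B}_{\cdot,1}^{0}\hookrightarrow L^{\cdot}$ the factors $u,v$ are then honest functions, so $u\otimes v$ is defined pointwise almost everywhere, and the three pieces of the Bony decomposition of $\mathsf{Bony}(u,v)$ converge (by standard estimates on their partial sums) in a common Lebesgue space to this pointwise product, yielding $u\otimes v=\mathsf{Bony}(u,v)$. Applying $\mathbb{P}\nabla\cdot$ distributionally and using that $\sum_{j}\dot{\Delta}_{j}(u\otimes v)\to u\otimes v$ in $\mathcal{S}'$ (a consequence of the Besov membership just established) gives the second identity.

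Part~(ii) is a direct duality computation. Pairing against $\phi\in\mathcal{S}(\mathbb{R}^{n})$ and unfolding the definitions yields
\[
\langle (G\mathbb{P}\nabla\cdot W)_{i}(t),\phi\rangle = -\int_{0}^{t}\langle W_{jk}(s),\mathbb{P}_{ij}\nabla_{k}e^{(t-s)\Delta}\phi\rangle\,\mathrm{d}s = -\int_{0}^{t}\langle W_{jk}(s),(\nabla_{k}\mathcal{T}_{ij}(t-s,\cdot))\ast\phi\rangle\,\mathrm{d}s,
\]
using $\mathbb{P}_{ij}\nabla_{k}e^{(t-s)\Delta}\phi=\nabla_{k}(\mathcal{T}_{ij}(t-s,\cdot)\ast\phi)$. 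Fubini's theorem, justified by $W\in L_{T}^{1}L^{p}$, the heat-kernel bound $\|\nabla_{k}\mathcal{T}_{ij}(t-s,\cdot)\ast g\|_{L^{p}}\lesssim (t-s)^{-1/2}\|g\|_{L^{p}}$, and integrability of $(t-s)^{-1/2}$ on $(0,T)$, converts the display to an absolutely convergent triple integral; a change of variables together with the oddness of $\nabla_{k}\mathcal{T}_{ij}$ in its spatial argument produces the stated formula, and the same bounds verify $L_{T}^{1}L^{p}$-membership of the representative.

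The main obstacle will be the bilinear estimate in part~(i): it has to hold uniformly across the range~\eqref{ale-conditions}, cover both the classical and paraproduct regimes, and handle the endpoint cases $\epsilon+2/\alpha\in\{0,1\}$ and $-s_{2}=\epsilon+2/\alpha$, which push the Chemin-Lerner paraproduct estimates to their summation endpoints and require careful control in the $\widetilde{L}^{\alpha}$ topology.
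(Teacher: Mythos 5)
Your proposal follows essentially the same route as the paper's own proof: $S[f]$ is placed in $\widetilde{L}_{T}^{2}\dot{B}_{p_{0},1}^{0}(\mathbb{R}^{n})$ by exactly the frequency-split summation (cut at $2^{2j}\sim T^{-1}$, using $s_{0}\in(-1,0)$ on the two sides) that underlies the paper's combination of the heat estimate \eqref{besov-heat-estimate-1} with the interpolation \eqref{interpolation-geometric}; the bilinear term is placed in $\widetilde{L}_{T}^{\alpha}\dot{B}_{p,1}^{0}(\mathbb{R}^{n})$ via the estimate \eqref{besov-uniqueness-bilinear-estimate}, which the paper has already established in Lemma \ref{bilinear-estimates-lemma}(i) rather than treating as an obstacle internal to this proposition; the identity $u\otimes v=\mathsf{Bony}(u,v)$ comes from absolute convergence of the Littlewood--Paley double series in a Lebesgue space, as in the remarks preceding Lemma \ref{paraproduct}; and part (ii) is the same duality/Fubini/oddness computation with the Oseen kernel bound ${\|\nabla\mathcal{T}(t)\|}_{L^{1}(\mathbb{R}^{n})}\lesssim_{n}t^{-1/2}$.

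One step is stated too loosely to survive as written. For the identity $\mathbb{P}\nabla\cdot(u\otimes v)=\sum_{j}\mathbb{P}\nabla\cdot\dot{\Delta}_{j}(u\otimes v)$ you appeal to convergence of $\sum_{j}\dot{\Delta}_{j}(u\otimes v)$ to $u\otimes v$ in $\mathcal{S}'(\mathbb{R}^{n})$, but $\mathbb{P}\nabla\cdot$ is not continuous on $\mathcal{S}'(\mathbb{R}^{n})$: it is defined by pairing against $\mathbb{P}_{ij}\nabla_{k}\phi$, which is not a Schwartz function (the symbol $\xi_{i}\xi_{j}\xi_{k}/|\xi|^{2}$ is singular at the origin), so $\mathcal{S}'$-convergence of the partial sums does not let you pass the operator through the limit. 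What makes the identity true is that $u\otimes v$ lies in a Lebesgue space $L^{r}(\mathbb{R}^{n})$, the partial sums converge there, and $\sum_{j}\sigma(-D)\dot{\Delta}_{j}\phi$ converges in $L^{r'}(\mathbb{R}^{n})$ with norm controlled by ${\|\phi\|}_{\dot{B}_{r',1}^{1}(\mathbb{R}^{n})}$ and hence by finitely many Schwartz seminorms (Lemma \ref{convergence-duality}); this duality argument is exactly the content of the paper's Lemma \ref{sigma-lemma}, and you should route the convergence through $L^{r}$ rather than through $\mathcal{S}'$.
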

For the sake of completeness, we will prove the following uniqueness result.
\begin{proposition}\label{besov-uniqueness-theorem}
    Let $T\in(0,\infty]$ and $f\in\mathcal{S}'(\mathbb{R}^{n};\mathbb{R}^{n})$. For any $(\alpha,\ell,\epsilon)\in{[1,\infty]}^{2}\times(0,1]$ satisfying \eqref{ale-conditions}, there exists at most one solution $u\in\cap_{T'\in(0,T)}X_{\ell,\epsilon}^{\alpha}(T')$ to the equation $u=S[f]-B_{\alpha,\ell,\epsilon}[u,u]$.
\end{proposition}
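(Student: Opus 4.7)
The plan is a standard contraction-plus-bootstrap strategy. Setting $w := u - v$ and subtracting the two fixed-point equations yields the linearized identity
\begin{equation*}
    w(t) = -B_{\alpha,\ell,\epsilon}[w, u](t) - B_{\alpha,\ell,\epsilon}[v, w](t) \quad \text{in }\mathcal{S}'(\mathbb{R}^{n}).
\end{equation*}
I would first show that $w \equiv 0$ on some small initial interval $[0, T_{0}]$, and then extend uniqueness to all of $(0, T)$ by setting $T^{*} := \sup\{\tau \in (0, T) : u \equiv v \text{ on } (0, \tau)\}$: because $w$ vanishes on $(0, T^{*})$ and the operators $B_{\alpha,\ell,\epsilon}$ are given by a time integration anchored at $0$, we automatically obtain $u(T^{*}) = v(T^{*})$, and re-running the local argument starting from $T^{*}$ forces $T^{*} = T$.

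The technical heart is a time-uniform bilinear estimate
\begin{equation*}
    \|B_{\alpha,\ell,\epsilon}[a, b]\|_{X^{\alpha}_{\ell,\epsilon}(T')} \leq C\, \|a\|_{X^{\alpha}_{\ell,\epsilon}(T')}\|b\|_{X^{\alpha}_{\ell,\epsilon}(T')}
\end{equation*}
with $C$ independent of $T'$. In the subcritical regime $s_{\ell}+\epsilon+2/\alpha > 0$, I would derive it from Bony's paraproduct decomposition of $ab$ (in accordance with the definition of $\widetilde{B}$), estimating each Littlewood-Paley piece via Bernstein and H\"older in space and then applying heat-semigroup smoothing to $G\mathbb{P}\nabla$; the positive regularity index is calibrated precisely so as to absorb the derivative in $\mathbb{P}\nabla$ and the parabolic cost of the time norm. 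In the critical regime $\ell = p_{\alpha,\epsilon}$, one works directly in $L^{\alpha}_{T'}L^{\ell}$ via the Oseen-kernel representation of Proposition~\ref{lorentz-besov-equiv}(ii) with H\"older in space and classical heat-kernel estimates. Feeding the bilinear estimate into the linearized equation yields
\begin{equation*}
    \|w\|_{X^{\alpha}_{\ell,\epsilon}(T_{0})} \leq C\bigl(\|u\|_{X^{\alpha}_{\ell,\epsilon}(T_{0})} + \|v\|_{X^{\alpha}_{\ell,\epsilon}(T_{0})}\bigr)\|w\|_{X^{\alpha}_{\ell,\epsilon}(T_{0})},
\end{equation*}
so local uniqueness reduces to making the bracket factor strictly less than $1/C$ for some $T_{0}$ small.

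The hard part is precisely this shrinking step. Since $X^{\alpha}_{\ell,\epsilon}$ is built on $\ell^{\infty}$-Besov summability and permits $\alpha = \infty$, dominated convergence does not by itself force $\|u\|_{X^{\alpha}_{\ell,\epsilon}(T_{0})}$ or $\|v\|_{X^{\alpha}_{\ell,\epsilon}(T_{0})}$ to tend to $0$ as $T_{0} \to 0$. To circumvent this I would exploit the fact that $w$, as a fixed point of the linear operator $h \mapsto -B_{\alpha,\ell,\epsilon}[h, u] - B_{\alpha,\ell,\epsilon}[v, h]$, inherits the heat-semigroup smoothing in $G$ and hence automatically belongs to a strictly smaller auxiliary path space (with $\ell^{1}$-Besov summability, or with time-integrability index $\alpha' < \alpha$) in which dominated convergence does apply; running the contraction argument in this auxiliary space closes the loop. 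The delicate technical point is proving a strengthened bilinear estimate $X^{\alpha}_{\ell,\epsilon} \times X^{\alpha}_{\ell,\epsilon} \to (\text{refined space})$, which requires summing explicitly over the Littlewood-Paley indices using the heat-kernel damping factor $e^{-c(t-s)2^{2j}}$ arising from Bony's decomposition.
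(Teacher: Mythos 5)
Your overall architecture---linearize around $w=u-v$, prove uniqueness on a short initial interval, then continue by translating in time and using that $B_{\alpha,\ell,\epsilon}$ at time $T^{*}$ only sees $u|_{(0,T^{*})}$---is exactly the paper's (its proof restarts at $T_{0}-\eta$ rather than at $T_{0}$, but that is cosmetic, granted the semigroup identity \eqref{bilinear-semigroup} which you implicitly invoke). The gap is in the quantitative input. You state the bilinear estimate as \emph{time-uniform}, $\|B_{\alpha,\ell,\epsilon}[a,b]\|_{X^{\alpha}_{\ell,\epsilon}(T')}\leq C\|a\|\,\|b\|$ with $C$ independent of $T'$, and then correctly observe that this leaves you stuck, since the $\ell^{\infty}$-Besov (or $\alpha=\infty$) structure of $X^{\alpha}_{\ell,\epsilon}$ prevents $\|u\|_{X^{\alpha}_{\ell,\epsilon}(T_{0})}$ from shrinking as $T_{0}\to0$. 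But $X^{\alpha}_{\ell,\epsilon}$ is subcritical by exactly $\epsilon$, and what the computation actually yields (the paper's \eqref{besov-uniqueness-bilinear-estimate}) is $\|B_{\alpha,\ell,\epsilon}[a,b]\|_{X^{\alpha}_{\ell,\epsilon}(T')}\lesssim (T')^{\epsilon/2}\|a\|\,\|b\|$. The factor $(T')^{\epsilon/2}$ falls out of your own sketch of the $\ell=p_{\alpha,\epsilon}$ case: the Oseen-kernel time convolution in $L^{\alpha}_{T'}L^{\ell}$ with $\frac{n}{\ell}+\frac{2}{\alpha}=1-\epsilon$ gains precisely $(T')^{\epsilon/2}$ via Young's inequality, and the Besov regime gains it from the $G$-estimate applied with a regularity deficit of $\epsilon$. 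With this factor, the absorption $\|w\|\leq C\,T_{0}^{\epsilon/2}\bigl(\|u\|_{X^{\alpha}_{\ell,\epsilon}(T_{0})}+\|v\|_{X^{\alpha}_{\ell,\epsilon}(T_{0})}\bigr)\|w\|$ closes for small $T_{0}$ using only the trivial monotone bound $\|u\|_{X^{\alpha}_{\ell,\epsilon}(T_{0})}\leq\|u\|_{X^{\alpha}_{\ell,\epsilon}(T')}<\infty$, and the same mechanism drives the continuation step; no shrinking of norms is required anywhere.

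Consequently the ``hard part'' you single out is an artifact of having dropped the $T^{\epsilon/2}$, and your proposed cure---bootstrapping $w$ into a refined path space with $\ell^{1}$ summability or a smaller time exponent and contracting there---is the weakest point of the proposal. As described it does not obviously close: even if $\|w\|$ measured in the refined space tends to $0$ with $T_{0}$, the linearized inequality still carries the prefactor $\|u\|+\|v\|$ measured in the \emph{original} space, which is exactly the quantity you said does not shrink; making such an argument work is the genuinely delicate device needed for \emph{critical} spaces (where one must additionally split off $S[f]$ and exploit properties of the data), and it is unnecessary here. The paper does use an $\ell^{1}$-refined intermediate quantity $\widetilde{L}^{\alpha}_{T}\dot{B}^{s_{\ell}+\epsilon+2/\alpha}_{\ell,1}$ and reduces to $\alpha<\infty$, but only to obtain continuity in $t$ of the auxiliary function $h(t)$ in its a priori bound (Lemma \ref{besov-uniqueness-lemma-1}), not to manufacture smallness. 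If you restore the $T^{\epsilon/2}$ in your bilinear estimate, the rest of your outline goes through essentially as in the paper.
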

\begin{remark}\label{besov-uniqueness-remark}
\begin{enumerate}[label=(\roman*)]
    \item
    Applying Proposition \ref{besov-uniqueness-theorem} in the case $\alpha\in(2,\infty]$, $\ell=\infty$ and $\epsilon=1-\frac{2}{\alpha}$, we see that the fixed point problem $u=S[f]-B[u,u]$ satisfies uniqueness in $\cap_{T'\in(0,T)}\cup_{\alpha\in(2,\infty]}L_{T}^{\alpha}L^{\infty}(\mathbb{R}^{n})$.
    \item
    Proposition \ref{besov-uniqueness-theorem} in the case $\ell=p_{\alpha,\epsilon}$ was already proved in \cite{fabes1972}. A version in Lorentz spaces is proved in \cite{davies-lorentz-blowup}.
    \item
    For $n=3$, uniqueness in the class $\widetilde{L}_{T}^{\beta}\dot{B}_{\ell,q}^{s_{\ell}+\frac{2}{\beta}}(\mathbb{R}^{n})$, $\ell,q\in[1,\infty)$, $\beta\in(2,\infty)$, $s_{\ell\vee2}+\frac{2}{\beta}>0$, is proved in \cite[page 1419]{gallagher2003}. Writing $\frac{2}{\beta}=\epsilon+\frac{2}{\alpha}$ for $\epsilon\in(0,\frac{2}{\beta}]$, this implies uniqueness in the class $\widetilde{L}_{T}^{\alpha}\dot{B}_{\ell,q}^{s_{\ell}+\epsilon+\frac{2}{\alpha}}(\mathbb{R}^{n})$. Proposition \ref{besov-uniqueness-theorem} in the case $\ell<p_{\alpha,\epsilon}$ provides the new information that uniqueness holds in $\widetilde{L}_{T}^{\alpha}\dot{B}_{\ell,\infty}^{s_{\ell}+\epsilon+\frac{2}{\alpha}}(\mathbb{R}^{n})$.
    \item
    Under additional assumptions on the initial data, the paper \cite{chemin1999} provides uniqueness results in classes which are closely related to spaces of the form $\widetilde{L}_{T}^{\beta}\dot{B}_{\ell,\infty}^{s_{\ell}+\frac{2}{\beta}}(\mathbb{R}^{n})$.
\end{enumerate}
\end{remark}
The main result of this paper is the following theorem, which establishes properties of solutions when the initial data belongs to a subcritical Besov space $\dot{B}_{\infty,\infty}^{-1+\epsilon}(\mathbb{R}^{n};\mathbb{R}^{n})$, $\epsilon\in(0,1)$. As noted in Remark \ref{besov-existence-remark}(ii) below, the following theorem is an improvement on a result by Chemin and Gallagher \cite[Theorem 1.3]{chemin2019a}: for $n=3$, $\epsilon\in(0,1)$ and $f\in\dot{B}_{\infty,\infty}^{-1+\epsilon}(\mathbb{R}^{n};\mathbb{R}^{n})$, the result in \cite{chemin2019a} constructs a local solution in the class $Z_{\infty,\infty}^{-1+\epsilon}(T)$; we show that this solution and its maximal existence time are independent of $\epsilon$, and we establish additional regularity properties of the solution, depending on the Besov spaces to which the initial data belongs.
\begin{theorem}\label{besov-local-existence-theorem}
\begin{enumerate}[label=(\roman*)]
    \item
    There exists a positive constant $C_{\varphi}$ (where $\varphi$ is the cutoff function used to define the Littlewood-Paley projections), and there exists a positive constant $\widetilde{C}_{\varphi,\epsilon}$ for every $\epsilon\in(0,1)$, such that: if $T\in(0,\infty)$, $\epsilon\in(0,1)$, $s\in(-1,\infty)$, $p,q\in[1,\infty]$, and $f\in\dot{B}_{p,q}^{s}(\mathbb{R}^{n})\cap\dot{B}_{\infty,\infty}^{-1+\epsilon}(\mathbb{R}^{n})$ satisfies
    \begin{equation}\label{besov-local-existence-condition}
        \frac{C_{\varphi}}{\epsilon(1-\epsilon)}T^{\frac{\epsilon}{2}}{\|f\|}_{\dot{B}_{\infty,\infty}^{-1+\epsilon}(\mathbb{R}^{n})}\wedge\widetilde{C}_{\varphi,\epsilon}T^{\frac{1}{2}}{\|f\|}_{L^{\infty}(\mathbb{R}^{n})} < 1,
    \end{equation}
    then there exists $u\in Z_{p,q}^{s}(T)\cap Z_{\infty,\infty}^{-1+\epsilon}(T)$ satisfying $u(t)=S[f](t)-B[u,u](t)$ in $\mathcal{S}'(\mathbb{R}^{n})$ for all $t\in(0,T)$.
    \item
    Let $\epsilon\in(0,1)$, $s\in(-1,\infty)$, $p,q\in[1,\infty]$, and $f\in\dot{B}_{p,q}^{s}(\mathbb{R}^{n})\cap\dot{B}_{\infty,\infty}^{-1+\epsilon}(\mathbb{R}^{n})$. Let $T_{f}^{*}\in(0,\infty]$ be the maximal time for which there exists a solution $u\in\cap_{T'\in(0,T_{f}^{*})}\left(Z_{p,q}^{s}(T')\cap Z_{\infty,\infty}^{-1+\epsilon}(T')\right)$ to the equation $u=S[f]-B[u,u]$. If $T_{f}^{*}<\infty$, then we have the blowup estimate
    \begin{equation}\label{besov-blowup-estimate}
        \frac{C_{\varphi}}{\epsilon(1-\epsilon)}{(T_{f}^{*}-t)}^{\frac{\epsilon}{2}}{\|u(t)\|}_{\dot{B}_{\infty,\infty}^{-1+\epsilon}(\mathbb{R}^{n})}\wedge\widetilde{C}_{\varphi,\epsilon}{(T_{f}^{*}-t)}^{\frac{1}{2}}{\|u(t)\|}_{L^{\infty}(\mathbb{R}^{n})} \geq 1
    \end{equation}
    for all $t\in(0,T_{f}^{*})$, where $C_{\varphi}$ and $\widetilde{C}_{\varphi,\epsilon}$ are the constants from \eqref{besov-local-existence-condition}.
\end{enumerate}
\end{theorem}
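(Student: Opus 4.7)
The plan is to prove part (i) by a Picard iteration in the space $Z_{\infty,\infty}^{-1+\epsilon}(T)$, then lift the solution to $Z_{p,q}^{s}(T)\cap Z_{\infty,\infty}^{-1+\epsilon}(T)$, and deduce part (ii) from part (i) via the standard continuation argument. Setting $\Phi(u):=S[f]-B[u,u]$, the linear bound $\|S[f]\|_{Z_{\infty,\infty}^{-1+\epsilon}(T)}\lesssim_{\varphi}\|f\|_{\dot{B}_{\infty,\infty}^{-1+\epsilon}}$ is a routine heat-kernel computation on each Littlewood-Paley block. The crucial nonlinear ingredient is the bilinear estimate
\[
\|B[u,v]\|_{Z_{\infty,\infty}^{-1+\epsilon}(T)}\leq \frac{C_{\varphi}}{\epsilon(1-\epsilon)}\,T^{\epsilon/2}\,\|u\|_{Z_{\infty,\infty}^{-1+\epsilon}(T)}\|v\|_{Z_{\infty,\infty}^{-1+\epsilon}(T)},
\]
together with its $L^{\infty}$-variant, which gives rise to the $\widetilde{C}_{\varphi,\epsilon}T^{1/2}$ alternative. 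Given these, the smallness condition \eqref{besov-local-existence-condition} makes $\Phi$ a contraction on a suitable closed ball and produces the fixed point.

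I expect the bilinear estimate, with its sharp $\epsilon$-dependence, to be the main obstacle. The natural approach is to decompose $u\otimes v$ via the Bony paraproduct into $T_{u}v+T_{v}u+R(u,v)$, project onto $\dot{\Delta}_{j}B[\cdot,\cdot]$, and estimate each piece using the spectral localization and smoothing of $e^{(t-s)\Delta}\mathbb{P}\nabla\cdot$ on the $j$-th frequency annulus together with Bernstein inequalities. The factor $1/(1-\epsilon)$ arises from summing the low-frequency pieces appearing in the paraproducts: $\sum_{l<k}\|\dot{\Delta}_{l}u\|_{L^{\infty}}\lesssim\sum_{l<k}2^{l(1-\epsilon)}\lesssim 2^{k(1-\epsilon)}/(1-\epsilon)$. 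The factor $1/\epsilon$ arises from geometric series of the form $\sum_{k\geq j}2^{-k\epsilon}\lesssim 2^{-j\epsilon}/\epsilon$ which appear in the remainder piece once the heat gain is invoked. The time factor $T^{\epsilon/2}$ comes from the interpolation $\min(T,\,2^{-2j})\leq T^{\epsilon/2}\,2^{-2j(1-\epsilon/2)}$ applied inside the time integral against the heat semigroup.

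To extend the solution to $Z_{p,q}^{s}(T)\cap Z_{\infty,\infty}^{-1+\epsilon}(T)$, I plan to prove a mixed bilinear estimate
\[
\|B[u,v]\|_{Z_{p,q}^{s}(T)}\leq \frac{C_{\varphi}}{\epsilon(1-\epsilon)}\,T^{\epsilon/2}\,\|u\|_{Z_{\infty,\infty}^{-1+\epsilon}(T)}\|v\|_{Z_{p,q}^{s}(T)}
\]
by the same paraproduct analysis, and run the Picard iteration in the intersection space. Contractivity is then controlled purely by the $Z_{\infty,\infty}^{-1+\epsilon}(T)$-size of the iterates, so the smallness condition \eqref{besov-local-existence-condition}---which only involves $\|f\|_{\dot{B}_{\infty,\infty}^{-1+\epsilon}}$ and $\|f\|_{L^{\infty}}$---remains sufficient, while the bound $\|S[f]\|_{Z_{p,q}^{s}(T)}\lesssim\|f\|_{\dot{B}_{p,q}^{s}}$ controls the other factor.

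For part (ii), suppose toward contradiction that $T_{f}^{*}<\infty$ and \eqref{besov-blowup-estimate} fails at some $t_{0}\in(0,T_{f}^{*})$; since the $L^{\infty}$ clause of the wedge is automatically $\geq 1$ whenever $u(t_{0})\notin L^{\infty}$, whichever clause is $<1$ corresponds to a finite norm of $u(t_0)$. The embedding of the $Z$-spaces into $L_{T}^{\overline{\infty}}\dot{B}$-type spaces guarantees $u(t_{0})\in\dot{B}_{p,q}^{s}\cap\dot{B}_{\infty,\infty}^{-1+\epsilon}$. By continuity of $T\mapsto T^{\epsilon/2}$ and $T\mapsto T^{1/2}$ I can choose $T'>T_{f}^{*}-t_{0}$ so that \eqref{besov-local-existence-condition} holds with $f$ replaced by $u(t_{0})$ and $T$ by $T'$. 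Part (i) yields a solution $\tilde{u}$ on $(0,T')$ with initial datum $u(t_{0})$, and Proposition \ref{besov-uniqueness-theorem}---applied in a uniqueness class containing both $\tilde{u}$ and the translated solution $s\mapsto u(t_{0}+s)$---forces $\tilde{u}(s)=u(t_{0}+s)$ on their common interval. Concatenation extends $u$ to $(0,t_{0}+T')\supsetneq(0,T_{f}^{*})$, contradicting the maximality of $T_{f}^{*}$.
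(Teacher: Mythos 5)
Your plan for the construction in $Z_{\infty,\infty}^{-1+\epsilon}(T)$ and for part (ii) matches the paper's argument (two alternative smallness norms corresponding to the two clauses of the wedge, Picard iteration, then the standard restart-and-uniqueness continuation argument). The genuine gap is in the persistence-of-regularity step. The mixed bilinear estimate you propose,
\begin{equation*}
    {\|B[u,v]\|}_{Z_{p,q}^{s}(T)}\leq \frac{C_{\varphi}}{\epsilon(1-\epsilon)}\,T^{\epsilon/2}\,{\|u\|}_{Z_{\infty,\infty}^{-1+\epsilon}(T)}{\|v\|}_{Z_{p,q}^{s}(T)},
\end{equation*}
is false once $s>\epsilon$. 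The obstruction is the paraproduct term $\dot{T}_{v}u$, in which $u$ sits at high frequency: the output regularity $s+1$ needed for ${\|u\otimes v\|}_{\dot{B}_{p,q}^{s+1}}$ must then be borrowed from $u$, but the $Z_{\infty,\infty}^{-1+\epsilon}$ norm only controls $u$ up to regularity $1+\epsilon<s+1$. Concretely, taking $u$ spectrally concentrated at frequency $N$ and $v$ at frequency $1$, the left side scales like $N^{s+1}$ while the right side scales at best like $N^{1+\epsilon}$. Since the theorem allows all $s\in(-1,\infty)$, your estimate cannot be the basis of the argument.

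The paper's estimate \eqref{persistence-bilinear-estimate} repairs this by interpolating: the $\dot{T}_{v}u$ term is bounded by ${\|u\|}_{Z_{\infty,\infty}^{-1+\epsilon}(T)}^{\lambda}{\|u\|}_{Z_{p,q}^{s}(T)}^{1-\lambda}{\|v\|}_{Z_{\infty,\infty}^{-1+\epsilon}(T)}^{1-\lambda}{\|v\|}_{Z_{p,q}^{s}(T)}^{\lambda}$ with $\lambda(s+1-\epsilon)<1-\epsilon$, so that the low-frequency factor $v$ is placed in a negative-regularity space. This destroys your claim that ``contractivity is controlled purely by the $Z_{\infty,\infty}^{-1+\epsilon}$-size of the iterates'': the increment $\alpha_{m+1}={\|u^{m+1}-u^{m}\|}_{Z_{p,q}^{s}(T)}$ is then controlled by $\gamma_{m}N_{m}+\gamma_{m}^{\lambda}\alpha_{m}^{1-\lambda}\beta_{m}^{1-\lambda}N_{m}^{\lambda}$, which is not a contraction in $\alpha_{m}$. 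The paper closes the argument (following \cite[Theorem 9.11]{lemarie2016}) by Young's product inequality, $\alpha_{m+1}\leq\frac12\alpha_{m}+K\gamma_{m}N_{m}(1+\beta_{\infty}^{1/\lambda-1})$, followed by a telescoping product bound on $N_{m+1}-\frac12 N_{m}$ using $\sum_{m}\gamma_{m}<\infty$ (which comes from the already-established convergence in $Z_{\infty,\infty}^{-1+\epsilon}(T)$). You would need to supply both the interpolated estimate and this non-contractive summability argument to complete part (i).
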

\begin{remark}\label{besov-existence-remark}
\begin{enumerate}[label=(\roman*)]
    \item
    For $T\in(0,\infty)$ and $\epsilon\in(0,1)$, we have the embedding $Z_{\infty,\infty}^{-1+\epsilon}(T)\subseteq X_{\infty,\epsilon/2}^{4/(2-\epsilon)}(T)=L_{T}^{4/(2-\epsilon)}L^{\infty}(\mathbb{R}^{n})$ (proved in Lemma \ref{bilinear-estimates-lemma} below), so by Remark \ref{besov-uniqueness-remark}(i) the solution is unique among all solutions belonging to $\cap_{T'\in(0,T_{f}^{*})}\cup_{\alpha\in(2,\infty)}L_{T'}^{\alpha}L^{\infty}(\mathbb{R}^{n})$.
    \item
    For $n=3$, $\epsilon\in(0,1)$ and $f\in\dot{B}_{\infty,\infty}^{-1+\epsilon}(\mathbb{R}^{n})$, it was proved in \cite[Theorem 1.3]{chemin2019a} that there exists a local solution $u\in\cap_{T'\in(0,T_{f,\epsilon}^{*})}Z_{\infty,\infty}^{-1+\epsilon}(T')$, with maximal existence time $T_{f,\epsilon}^{*}\gtrsim_{\varphi,\epsilon}{\|f\|}_{\dot{B}_{\infty,\infty}^{-1+\epsilon}(\mathbb{R}^{n})}^{-2/\epsilon}$; as pointed out in \cite{chemin2019b}, this implies the blowup estimate ${\|u(t)\|}_{\dot{B}_{\infty,\infty}^{-1+\epsilon}(\mathbb{R}^{n})}\gtrsim_{\varphi,\epsilon}{(T_{f,\epsilon}^{*}-t)}^{-\epsilon/2}$ if $T_{f,\epsilon}^{*}$ is finite.

    Our Theorem \ref{besov-local-existence-theorem} goes even further. Besides obtaining a more explicit dependence on $\epsilon$ (see Remark \ref{bilinear-estimates-remark}(ii) which compares our slightly stronger bilinear estimate with the key bilinear estimate used in the proof of \cite[Theorem 1.3]{chemin2019a}), the blowup estimate ${\|u(t)\|}_{L^{\infty}(\mathbb{R}^{n})}\gtrsim_{\varphi,\epsilon}{(T_{f}^{*}-t)}^{-1/2}$ rules out the existence of a solution in $L_{T_{f}^{*}}^{2}L^{\infty}(\mathbb{R}^{n})$ if $T_{f}^{*}$ is finite, and demonstrates that the blowup time $T_{f}^{*}$ is independent of $\epsilon,s,p,q$. In particular, $T_{f}^{*}$ coincides with the maximal existence time for a solution in the class $\cap_{T'\in(0,T_{f}^{*})}\cup_{\alpha\in(2,\infty)}L_{T'}^{\alpha}L^{\infty}(\mathbb{R}^{n})$. This independence is sometimes referred to as {\em propagation of regularity}: for $f\in\dot{B}_{\infty,\infty}^{-1+\epsilon}(\mathbb{R}^{n})$, there exists a solution $u\in\cap_{T'\in(0,T_{f}^{*})}Z_{\infty,\infty}^{-1+\epsilon}(T')$ (which satisfies the uniqueness described in part (i) of this Remark); if we make the additional assumption that $f\in\dot{B}_{p,q}^{s}(\mathbb{R}^{n})$, then the solution satisfies the additional property $u\in\cap_{T'\in(0,T_{f}^{*})}Z_{p,q}^{s}(T')$.
    \item
    In the case $s=s_{p}+\epsilon$, the embedding $\dot{B}_{p,q}^{s_{p}+\epsilon}(\mathbb{R}^{n})\hookrightarrow\dot{B}_{\infty,\infty}^{-1+\epsilon}(\mathbb{R}^{n})$ (see \eqref{besov-embedding}, which similarly implies that $Z_{p,q}^{s_{p}+\epsilon}(T) \hookrightarrow Z_{\infty,\infty}^{-1+\epsilon}(T)$ for $T>0$) and the blowup estimate \eqref{besov-blowup-estimate} imply that ${\|u(t)\|}_{\dot{B}_{p,q}^{s_{p}+\epsilon}(\mathbb{R}^{n})}\gtrsim_{\varphi}\epsilon(1-\epsilon){(T_{f}^{*}-t)}^{-\epsilon/2}$ for all $t\in(0,T_{f}^{*})$ if $T_{f}^{*}<\infty$, so $T_{f}^{*}$ (if finite) is the first time at which the norm ${\|u(t)\|}_{\dot{B}_{p,q}^{s_{p}+\epsilon}(\mathbb{R}^{n})}$ blows up.
    \item
    In the critical case $\epsilon=0$, the blowup criteria become slightly more qualitative. In the case $n=3$ it is known (for example, from \cite[Theorem A.1]{gallagher2003}) that if $p,q\in[1,\infty)$ and $f\in\dot{B}_{p,q}^{s_{p}}(\mathbb{R}^{3})$, then there exists (at least locally in time) a unique solution $u\in C([0,T);\dot{B}_{p,q}^{s_{p}}(\mathbb{R}^{3}))\cap\bigcap_{T'\in(0,T),\,\beta\in[1,\infty)}\widetilde{L}_{T'}^{\beta}\dot{B}_{p,q}^{s_{p}+\frac{2}{\beta}}(\mathbb{R}^{3})$. Writing $T^{*}$ to denote the maximal existence time, it was established in \cite{gallagher2003} that if $T^{*}<\infty$ then we have the blowup criterion $u\notin C([0,T];\dot{B}_{p,q}^{s_{p}}(\mathbb{R}^{3}))$. This blowup criterion has since been improved to $\lim_{t\nearrow T^{*}}{\|u(t)\|}_{\dot{B}_{p,\infty}^{s_{p}}(\mathbb{R}^{3})}=\infty$ \cite[Corollary 1.9]{albritton2019}. (Intermediate improvements to the blowup criterion had been made in \cite{gallagher2016}, and then in \cite{albritton2018}.).
    \item
    Further properties of solutions may be established using the results of \cite[Section 8.4]{lemarie2016}. For example, \cite[Theorem 8.6]{lemarie2016} tells us, for $p\in(n,\infty]\cap[2,\infty]$, $\epsilon\in(0,-s_{p})$ and $q\in[\frac{-2}{s_{p}+\epsilon},\infty]$, that the bilinear estimate
    \begin{equation*}
        {\|B[u,v]\|}_{\mathcal{B}_{p,q}^{s_{p}+\epsilon}(T)}\lesssim_{\varphi,\epsilon,p,q}T^{\frac{\epsilon}{2}}{\|u\|}_{\mathcal{B}_{p,q}^{s_{p}+\epsilon}(T)}{\|v\|}_{\mathcal{B}_{p,q}^{s_{p}+\epsilon}(T)}
    \end{equation*}
    holds, where ${\|u\|}_{\mathcal{B}_{p,q}^{s}(T)}:={\left\|t^{-s/2}{\|u(t)\|}_{L^{p}(\mathbb{R}^{n})}\right\|}_{L^{q}((0,T),\frac{\mathrm{d}t}{t})}$. By making use of the heat estimate ${\|S[f]\|}_{\mathcal{B}_{p,q}^{s_{p}+\epsilon}(T)}\lesssim_{\varphi,\epsilon,p,q}{\|f\|}_{\dot{B}_{p,q}^{s_{p}+\epsilon}(\mathbb{R}^{n})}$, the existence time for a solution in $\mathcal{B}_{p,q}^{s_{p}+\epsilon}$ can then be bounded below by a constant multiple of ${\|f\|}_{\dot{B}_{p,q}^{s_{p}+\epsilon}(\mathbb{R}^{n})}^{-2/\epsilon}$.
\end{enumerate}
\end{remark}
To bridge some of the gaps between Proposition \ref{besov-uniqueness-theorem} and Theorem \ref{besov-local-existence-theorem}, the following corollary describes situations in which solutions in the uniqueness class $X_{\ell,\epsilon}^{\alpha}$ coincide with, and have the same existence time as, the solution from Theorem \ref{besov-local-existence-theorem}.
\begin{corollary}\label{further-uni}
    Let $(\alpha,\ell,\epsilon)\in{[1,\infty]}^{2}\times(0,1]$ satisfy \eqref{ale-conditions}, let $\eta\in(0,1)\cap(0,\epsilon+\frac{2}{\alpha}]$, and let $f\in\dot{B}_{\ell,\infty}^{s_{\ell}+\eta}(\mathbb{R}^{n})$. Let $\widetilde{T}\in[0,\infty]$ be the maximal time for which there exists a solution $v\in\cap_{T'\in(0,\widetilde{T})}X_{\ell,\epsilon}^{\alpha}(T')$ to the equation $v=S[f]-B_{\alpha,\ell,\epsilon}[v,v]$ (if no such solution exists, then set $\widetilde{T}=0$).
    \begin{enumerate}[label=(\roman*)]
        \item If $-\left(s_{\ell}+\frac{2}{\alpha}\right)<\epsilon\leq\eta$, or if $\epsilon<\eta<\epsilon+\frac{2}{\alpha}$, then $\widetilde{T}>0$.
        \item If $\eta<s_{\ell}+2\left(\epsilon+\frac{2}{\alpha}\right)$ and $\widetilde{T}>0$, then $\widetilde{T}=T_{f}^{*}$, and the solution $v\in\cap_{T'\in(0,\widetilde{T})}X_{\ell,\epsilon}^{\alpha}(T')$ coincides with the solution $u\in\cap_{T'\in(0,T_{f}^{*})}Z_{\ell,\infty}^{s_{\ell}+\eta}(T')$ from Theorem \ref{besov-local-existence-theorem}.
    \end{enumerate}
\end{corollary}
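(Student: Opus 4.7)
The plan is to combine Theorem~\ref{besov-local-existence-theorem} (for existence) with Proposition~\ref{besov-uniqueness-theorem} (for uniqueness), bridged by an embedding
\[
Z_{\ell,\infty}^{s_{\ell}+\eta}(T) \cap Z_{\infty,\infty}^{-1+\eta}(T) \hookrightarrow X_{\ell,\epsilon}^{\alpha}(T)
\]
together with a mixed-regularity bilinear estimate. For part~(i), I would first apply Bernstein's inequality to obtain $\dot{B}_{\ell,\infty}^{s_{\ell}+\eta} \hookrightarrow \dot{B}_{\infty,\infty}^{-1+\eta}$, so that $f \in \dot{B}_{\infty,\infty}^{-1+\eta}$. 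Since $\eta\in(0,1)$ and $s_{\ell}+\eta>-1$, Theorem~\ref{besov-local-existence-theorem}, applied with its $\epsilon$-parameter equal to $\eta$ and $(p,q,s)=(\ell,\infty,s_{\ell}+\eta)$, then yields a local solution $u\in Z_{\ell,\infty}^{s_{\ell}+\eta}(T_{0}) \cap Z_{\infty,\infty}^{-1+\eta}(T_{0})$ for some $T_{0}>0$. To deduce $\widetilde{T}>0$, I would establish the displayed embedding via a high/low frequency split at $2^{j}\sim T^{-1/2}$: Chemin--Lerner interpolation of $\widetilde{L}_{T}^{1}\dot{B}_{\ell,\infty}^{s_{\ell}+\eta+2}$ and $\widetilde{L}_{T}^{\overline{\infty}}\dot{B}_{\ell,\infty}^{s_{\ell}+\eta}$ gives $u\in\widetilde{L}_{T}^{\alpha}\dot{B}_{\ell,\infty}^{s_{\ell}+\eta+2/\alpha}$, which controls the high frequencies using $\eta\geq\epsilon$, while the $\widetilde{L}_{T}^{\overline{\infty}}$ piece controls the low frequencies using $\eta\leq\epsilon+2/\alpha$. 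In the critical sub-case $X_{\ell,\epsilon}^{\alpha}=L_{T}^{\alpha}L^{\ell}$ I would additionally invoke $\widetilde{L}_{T}^{\alpha}\dot{B}_{\ell,1}^{0}\hookrightarrow L_{T}^{\alpha}L^{\ell}$, for which the strict inequalities from hypothesis~B (namely $\epsilon<\eta$ and $\eta<\epsilon+2/\alpha$) are needed to make the $\ell^{1}$-sum in $j$ converge at both ends.

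For part~(ii), the same embedding gives $u\in X_{\ell,\epsilon}^{\alpha}(T')$ for every $T'<T_{f}^{*}$, so $\widetilde{T}\geq T_{f}^{*}$ by maximality of $\widetilde{T}$, and uniqueness in $X_{\ell,\epsilon}^{\alpha}$ (Proposition~\ref{besov-uniqueness-theorem}) identifies $v=u$ on their common interval of existence. For the reverse inequality $\widetilde{T}\leq T_{f}^{*}$, I would prove a bilinear estimate of the form
\[
\|B_{\alpha,\ell,\epsilon}[v,w]\|_{Z_{\ell,\infty}^{s_{\ell}+\eta}(T)} \lesssim \|v\|_{X_{\ell,\epsilon}^{\alpha}(T)}\,\|w\|_{Z_{\ell,\infty}^{s_{\ell}+\eta}(T)}
\]
via a Bony paraproduct decomposition, and combine it with a time-partitioning / continuity argument to propagate the $Z$-regularity of $v$ for as long as $v$ remains in $X_{\ell,\epsilon}^{\alpha}$. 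The hypothesis $\eta<s_{\ell}+2(\epsilon+2/\alpha)$ is precisely the regularity threshold at which the Besov product law, combined with the $-1$ loss from $\mathbb{P}\nabla\cdot$ and the $+2$ gain from $G$, places the output of $B_{\alpha,\ell,\epsilon}[v,w]$ into the target space $Z_{\ell,\infty}^{s_{\ell}+\eta}(T)$.

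The main obstacle will be this mixed-regularity bilinear estimate: the two factors live in rather different function spaces --- one in the Chemin--Lerner space $X_{\ell,\epsilon}^{\alpha}$ (or the Lebesgue space $L_{T}^{\alpha}L^{\ell}$ in the critical sub-case) and the other in the intersection defining $Z_{\ell,\infty}^{s_{\ell}+\eta}$ --- and each term of the Bony decomposition of $v\otimes w$ must be tracked through both the spatial operator $\mathbb{P}\nabla\cdot$ and the temporal operator $G$ while the time-integrability exponents are matched via Young-type estimates. Verifying sharpness of the threshold $\eta<s_{\ell}+2(\epsilon+2/\alpha)$ across the low--high, high--low and high--high pieces, and handling the critical sub-case $s_{\ell}+\epsilon+2/\alpha=0$ where $X_{\ell,\epsilon}^{\alpha}$ is not itself a Chemin--Lerner space, is the most delicate bookkeeping of the proof.
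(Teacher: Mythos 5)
Your part (i) is essentially the paper's argument: embed $f\in\dot{B}_{\ell,\infty}^{s_{\ell}+\eta}(\mathbb{R}^{n})\hookrightarrow\dot{B}_{\infty,\infty}^{-1+\eta}(\mathbb{R}^{n})$, run Theorem \ref{besov-local-existence-theorem} with parameters $(\eta,\,s_{\ell}+\eta,\,\ell,\,\infty)$, and push the resulting solution into $X_{\ell,\epsilon}^{\alpha}$ via interpolation of the two components of $Z_{\ell,\infty}^{s_{\ell}+\eta}$ (the paper's embedding \eqref{Z-embedding}), using the third-index-$1$ variant \eqref{interpolation-geometric} exactly when the critical sub-case forces $\epsilon<\eta<\epsilon+\frac{2}{\alpha}$. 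Part (ii), however, has a genuine gap at its very first step: you assert that ``the same embedding gives $u\in X_{\ell,\epsilon}^{\alpha}(T')$'', but that embedding is only valid under the hypotheses of part (i), which are \emph{not} assumed in part (ii). For instance, in the critical sub-case $s_{\ell}+\epsilon+\frac{2}{\alpha}=0$ with $\eta\leq\epsilon$ (permitted in (ii) whenever $\eta<\epsilon+\frac{2}{\alpha}$), one only gets $Z_{\ell,\infty}^{s_{\ell}+\eta}(T')\subseteq\widetilde{L}_{T'}^{\alpha}\dot{B}_{\ell,\infty}^{s_{\ell}+\eta+\frac{2}{\alpha}}(\mathbb{R}^{n})$ with $s_{\ell}+\eta+\frac{2}{\alpha}\leq0$, which does not embed into $L_{T'}^{\alpha}L^{\ell}(\mathbb{R}^{n})=X_{\ell,\epsilon}^{\alpha}(T')$. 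The paper closes this by a bootstrap you omit: introduce $\zeta\in(0,\eta)\cap(0,\epsilon]$ and $\beta$ with $\zeta+\frac{2}{\beta}=\epsilon+\frac{2}{\alpha}$, check $u\in X_{\ell,\zeta}^{\beta}$, use \eqref{besov-uniqueness-bilinear-estimate} to deduce $u(t_{0})\in\dot{B}_{\ell,q}^{s_{\ell}+\epsilon+\frac{2}{\alpha}}(\mathbb{R}^{n})$ for a.e.\ $t_{0}$ (this is where $\eta<s_{\ell}+2(\epsilon+\frac{2}{\alpha})$ enters), and then invoke propagation of regularity (Remark \ref{besov-existence-remark}(ii)) to get $\tau_{t_{1}}u\in\widetilde{L}_{T'}^{\infty}\dot{B}_{\ell,q}^{s_{\ell}+\epsilon+\frac{2}{\alpha}}(\mathbb{R}^{n})\subseteq X_{\ell,\epsilon}^{\alpha}(T')$.

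For the reverse inequality $\widetilde{T}\leq T_{f}^{*}$ your route --- a new mixed bilinear estimate ${\|B[v,w]\|}_{Z_{\ell,\infty}^{s_{\ell}+\eta}(T)}\lesssim{\|v\|}_{X_{\ell,\epsilon}^{\alpha}(T)}{\|w\|}_{Z_{\ell,\infty}^{s_{\ell}+\eta}(T)}$ plus a continuity/time-partition argument --- is genuinely different from the paper's and is left unexecuted at precisely the point you flag as the main obstacle; note also that a smallness-by-partition argument is problematic when $\alpha=\infty$, since ${\|v\|}_{L_{T}^{\infty}L^{\ell}(\mathbb{R}^{n})}$ does not shrink on short intervals, and that the remainder term $\dot{R}$ fails the positivity condition of \eqref{bony-estimate-3} for some admissible parameter choices unless the time exponents are redistributed. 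The paper avoids all of this: applying the already-proven estimate \eqref{besov-uniqueness-bilinear-estimate} in the auxiliary space $X_{\ell,\zeta}^{\beta}$ shows $B[v,v]\in L_{T'}^{\beta}\dot{B}_{\ell,1}^{s_{\ell}+\zeta+\frac{2}{\beta}}(\mathbb{R}^{n})\cap L_{T'}^{\infty}\dot{B}_{\ell,1}^{s_{\ell}+\zeta}(\mathbb{R}^{n})\subseteq L_{T'}^{2/(\eta-\zeta)}\dot{B}_{\ell,1}^{s_{\ell}+\eta}(\mathbb{R}^{n})$, hence $v\in L_{T'}^{2/\eta}\dot{B}_{\ell,\infty}^{s_{\ell}+\eta}(\mathbb{R}^{n})$ for all $T'<\widetilde{T}$; if $T_{f}^{*}<\widetilde{T}$ this contradicts the quantitative blowup rate ${\|u(t)\|}_{\dot{B}_{\infty,\infty}^{-1+\eta}(\mathbb{R}^{n})}\gtrsim{(T_{f}^{*}-t)}^{-\eta/2}$ from Theorem \ref{besov-local-existence-theorem}(ii), since that rate is not $\frac{2}{\eta}$-integrable up to $T_{f}^{*}$. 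You should either adopt this shorter argument or actually supply the missing bilinear estimate together with a propagation scheme that handles $\alpha=\infty$.
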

\begin{remark}\label{further-uni-remark}
\begin{enumerate}[label=(\roman*)]
    \item
    In the context of Corollary \ref{further-uni}, we have $f\in\dot{B}_{\ell,\infty}^{s_{\ell}+\eta}(\mathbb{R}^{n})\hookrightarrow\dot{B}_{\infty,\infty}^{-1+\eta}(\mathbb{R}^{n})$, so the conditions of Proposition \ref{lorentz-besov-equiv}(i) are satisfied. In particular, any solution $v\in\cap_{T'\in(0,\widetilde{T})}X_{\ell,\epsilon}^{\alpha}(T')$ to the equation $v=S[f]-B_{\alpha,\ell,\epsilon}[v,v]$ satisfies $B[v,v]=\widetilde{B}[v,v]$, while the solution \linebreak $u\in\cap_{T'\in(0,T_{f}^{*})}Z_{\ell,\infty}^{s_{\ell}+\eta}(T')\subseteq
    \cap_{T'\in(0,T_{f}^{*})}Z_{\infty,\infty}^{-1+\eta}(T')\subseteq
    \cap_{T'\in(0,T_{f}^{*})}X_{\infty,\eta/2}^{4/(2-\eta)}(T')$ (noting the embeddings from (i) and (iii) of Remark \ref{besov-existence-remark}) from Theorem \ref{besov-local-existence-theorem}  similarly satisfies $B[u,u]=\widetilde{B}[u,u]$.
    \item
    In the case $\epsilon=\eta$, Corollary \ref{further-uni} describes some of the connections between the uniqueness class $X_{\ell,\epsilon}^{\alpha}$ and the existence class $Z_{\ell,\infty}^{s_{\ell}+\epsilon}$, which are natural classes to consider when the initial data belongs to $\dot{B}_{\ell,\infty}^{s_{\ell}+\epsilon}(\mathbb{R}^{n})$.
\end{enumerate}
\end{remark}
The results of this paper are related to those in \cite{davies-lorentz-blowup}, in which we consider Lorentz spaces $L^{p,q}(\mathbb{R}^{n})$ instead of Besov spaces $\dot{B}_{p,q}^{s}(\mathbb{R}^{n})$. By the heat estimate ${\|e^{t\Delta}f\|}_{L^{\infty}(\mathbb{R}^{n})}\lesssim_{n,p}t^{-n/2p}{\|f\|}_{L^{p,q}(\mathbb{R}^{n})}$ for $p>1$ and $t>0$, and by the equivalence of norms ${\|f\|}_{\dot{B}_{\infty,\infty}^{s}(\mathbb{R}^{n})}\approx_{\varphi,s}\sup_{t\in(0,\infty)}t^{-s/2}{\|e^{t\Delta}f\|}_{L^{\infty}(\mathbb{R}^{n})}$ for $s<0$, we have the embedding $L^{p,q}(\mathbb{R}^{n})\hookrightarrow\dot{B}_{\infty,\infty}^{-n/p}(\mathbb{R}^{n})$ for $p\in(1,\infty)$. The solutions constructed in this paper coincide with the solutions constructed in \cite{davies-lorentz-blowup}, by virtue of uniqueness in $L_{T}^{\alpha}L^{\infty}(\mathbb{R}^{n})$ for $\alpha>2$. For $p\in(n,\infty)$, the blowup estimate \eqref{besov-blowup-estimate} implies $\inf_{t\in(0,T_{f}^{*})}{(T_{f}^{*}-t)}^{\frac{1}{2}\left(1-\frac{n}{p}\right)}{\|u(t)\|}_{L^{p,q}(\mathbb{R}^{n})}\gtrsim_{n,p}1$ if $T_{f}^{*}$ is finite. In \cite{davies-lorentz-blowup}, we establish the additional property that if the initial data belongs to a Lorentz space, then the solution remains bounded in that Lorentz space on all intervals $(0,T)$ with $T<T_{f}^{*}$. In particular, $T_{f}^{*}$ (if finite) is the first time at which the subcritical $(p>n)$ Lorentz norms blow up. In the Lebesgue case $L^{p}(\mathbb{R}^{n})=L^{p,p}(\mathbb{R}^{n})$, this reduces to the famous blowup result of Leray \cite{leray1934}. We note that the weaker blowup estimate $\limsup_{t\nearrow T_{f}^{*}}{(T_{f}^{*}-t)}^{\frac{1}{2}\left(1-\frac{n}{p}\right)}{\|u(t)\|}_{L^{p,q}(\mathbb{R}^{n})}\gtrsim_{n,p}1$ (if $T_{f}^{*}<\infty$) was proved in \cite[Theorem 1.4]{wang2021} for $p\geq n$.

For all $\epsilon\in(0,1)$ and $p,q\in[1,\infty]$, this paper establishes the blowup estimate ${\|u(t)\|}_{\dot{B}_{p,q}^{s_{p}+\epsilon}(\mathbb{R}^{n})}\gtrsim_{\varphi,\epsilon}{(T_{f}^{*}-t)}^{-\epsilon/2}$ for all $t\in(0,T_{f}^{*})$ if $T_{f}^{*}<\infty$. In \cite{davies-high-reg-besov}, we establish (using a method based on the type employed in, e.g., \cite{mccormick2016, robinson2014, robinson2012},  which  differs significantly from the method used in this present paper) the same blowup estimate (assuming nice regularity properties of the solution, and with the implied constant also depending on $p\vee q\vee 2$) in the case $n\geq3$, $\epsilon\in[1,2)$, $p,q\in[1,\frac{n}{2-\epsilon})$, and in the case $n\geq3$, $\epsilon=2$, $q=1$, $p\in[1,\infty)$.

The rest of this paper is organised as follows. In Section \ref{besov-preliminaries} we recall standard properties of Besov spaces and weakly* measurable functions. In Section \ref{estimates-for-the-fixed-point-problem} we prove the main estimates that we will need for the operators $S$, $B$ and $\widetilde{B}$. In Section \ref{besov-uniqueness-section} we prove Proposition \ref{besov-uniqueness-theorem}. In Section \ref{besov-existence-section} we prove Theorem \ref{besov-local-existence-theorem} and Corollary \ref{further-uni}. In the Appendix, we prove some technical lemmas which are used in Section 3, and we give a proof of Proposition \ref{lorentz-besov-equiv}.
\section{Preliminaries}\label{besov-preliminaries}
\subsection{Besov spaces}\label{besov-section}
We give an overview of Besov spaces, using \cite{bahouri2011} as our main reference.
\begin{lemma}\label{bahouri-prop-2.10} \cite[Proposition 2.10]{bahouri2011}.
    Let $\mathcal{C}=\mathcal{C}(n)$ be the annulus $B(0,8/3)\setminus\overline{B}(0,3/4)$ in $\mathbb{R}^{n}$. Then the set $\widetilde{\mathcal{C}}=B(0,2/3)+\mathcal{C}$ is an annulus, and there exist radial functions $\chi\in\mathcal{D}(B(0,4/3))$ and $\varphi\in\mathcal{D}(\mathcal{C})$, taking values in $[0,1]$, such that
    \begin{equation*}
        \left\{\begin{array}{ll}
            \chi(\xi)+\sum_{j\geq0}\varphi(2^{-j}\xi)=1 & \forall\,\xi\in\mathbb{R}^{n}, \\
            \sum_{j\in\mathbb{Z}}\varphi(2^{-j}\xi)=1 & \forall\,\xi\in\mathbb{R}^{n}\setminus\{0\}, \\
            |j-j'|\geq2\Rightarrow\supp\varphi(2^{-j}\cdot)\cap\supp\varphi(2^{-j'}\cdot)=\emptyset, \\
            j\geq1\Rightarrow\supp\chi\cap\supp\varphi(2^{-j}\cdot)=\emptyset, \\
            |j-j'|\geq5\Rightarrow2^{j'}\widetilde{\mathcal{C}}\cap2^{j}\mathcal{C}=\emptyset, \\
            1/2\leq\chi^{2}(\xi)+\sum_{j\geq0}\varphi^{2}(2^{-j}\xi)\leq1 & \forall\,\xi\in\mathbb{R}^{n}, \\
            1/2\leq\sum_{j\in\mathbb{Z}}\varphi^{2}(2^{-j}\xi)\leq1 & \forall\,\xi\in\mathbb{R}^{n}\setminus\{0\}. \\
        \end{array}\right.
    \end{equation*}
\end{lemma}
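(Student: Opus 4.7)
The plan is to construct $\chi$ and $\varphi$ explicitly as a classical dyadic partition of unity, then verify the listed properties one by one.

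First I would pick a smooth radial function $\chi\in\mathcal{D}(B(0,4/3))$ with values in $[0,1]$ which equals $1$ on $\overline{B}(0,3/4)$; such a function is obtained by mollifying the indicator function of $\overline{B}(0,(3/4 + 4/3)/2)$ with a radial bump supported in a small enough ball. I would then \emph{define} $\varphi(\xi):=\chi(\xi/2)-\chi(\xi)$. This is automatically smooth and radial with values in $[0,1]$; it vanishes on $\overline{B}(0,3/4)$ (where both terms equal $1$) and outside $B(0,8/3)$ (where both terms vanish), so $\varphi\in\mathcal{D}(\mathcal{C})$. The partition of unity identities then follow from telescoping: for any $\xi\in\mathbb{R}^{n}$,
\begin{equation*}
    \chi(\xi)+\sum_{j=0}^{N}\varphi(2^{-j}\xi)=\chi(2^{-(N+1)}\xi)\longrightarrow1\quad\text{as }N\to\infty,
\end{equation*}
since $2^{-(N+1)}\xi$ eventually lies in $\overline{B}(0,3/4)$. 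For $\xi\neq0$ the two-sided telescoping $\sum_{j=-M}^{N}\varphi(2^{-j}\xi)=\chi(2^{-(N+1)}\xi)-\chi(2^{M}\xi)\to1-0=1$ gives the second identity.

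Next I would check the four geometric disjointness statements by direct computation with the inner/outer radii. The support of $\varphi(2^{-j}\cdot)$ lies in the annulus $2^{j}\mathcal{C}=\{3\cdot2^{j-2}<|\xi|<2^{j+3}/3\}$, so $|j-j'|\geq2$ forces these dyadic shells to separate (check by comparing the outer radius $2^{j'+3}/3$ with the inner radius $3\cdot2^{j-2}$ when $j'\leq j-2$, giving $8/3<9/1=9$, hence separation by a definite factor). The condition $\mathrm{supp}\,\chi\cap\mathrm{supp}\,\varphi(2^{-j}\cdot)=\emptyset$ for $j\geq1$ is the inequality $4/3<3\cdot2^{j-2}\cdot(\text{inner radius factor})$, i.e.\ $4/3<3/2$. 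For the annulus $\widetilde{\mathcal{C}}=B(0,2/3)+\mathcal{C}$, the triangle inequality shows $\widetilde{\mathcal{C}}=\{1/12<|\xi|<10/3\}$ (for the inner bound, write any $\xi$ with $|\xi|>1/12$ as $(\xi/|\xi|)\,r+0$ or combine a small shift with a point of $\mathcal{C}$; for the outer bound, $|x+y|\leq2/3+8/3=10/3$). Then $|j-j'|\geq5$ gives $2^{j'}\widetilde{\mathcal{C}}$ and $2^{j}\mathcal{C}$ disjoint because either $2^{j'}\cdot10/3<2^{j}\cdot3/4$ (when $j'\leq j-5$, using $2^{-5}\cdot 10/3 < 3/4$) or $2^{j'}/12>2^{j}\cdot8/3$ (when $j'\geq j+5$, using $2^{5}/12>8/3$).

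Finally, for the quadratic bounds I would exploit that $\chi,\varphi\in[0,1]$ together with the already-established partition of unity. The upper bound is immediate since $t^{2}\leq t$ for $t\in[0,1]$:
\begin{equation*}
    \chi^{2}(\xi)+\sum_{j\geq0}\varphi^{2}(2^{-j}\xi)\leq\chi(\xi)+\sum_{j\geq0}\varphi(2^{-j}\xi)=1.
\end{equation*}
For the lower bound, the disjoint-support conditions already proved imply that at each $\xi$ at most two terms among $\chi(\xi)$ and the $\varphi(2^{-j}\xi)$ are nonzero; writing them as $a,b\geq0$ with $a+b=1$, Cauchy--Schwarz gives $a^{2}+b^{2}\geq\tfrac{1}{2}(a+b)^{2}=\tfrac{1}{2}$. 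The same reasoning handles $\sum_{j\in\mathbb{Z}}\varphi^{2}(2^{-j}\xi)$ for $\xi\neq0$.

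There is no single hard step here; the main care required is bookkeeping with the explicit dyadic radii so that all the constants $3/4$, $4/3$, $8/3$, $2/3$ line up. The most delicate check is the $|j-j'|\geq5$ separation for $\widetilde{\mathcal{C}}$, since the inner radius $1/12$ of $\widetilde{\mathcal{C}}$ is quite small and one needs a factor of $2^{5}$ rather than the $2^{2}$ that sufficed for $\mathcal{C}$ alone.
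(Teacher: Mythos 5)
Your construction is correct, but note that the paper itself offers no proof of this lemma: it is imported verbatim from \cite{bahouri2011}, whose own argument takes a slightly different (equally standard) route, namely fixing a bump $\theta$ supported in a sub-annulus of $\mathcal{C}$ and equal to $1$ on a smaller annulus, and normalising $\varphi:=\theta/\sum_{j\in\mathbb{Z}}\theta(2^{-j}\cdot)$, the denominator being bounded above and below by the choice of radii. Your telescoping construction $\varphi(\xi)=\chi(\xi/2)-\chi(\xi)$ is the other classical route and is arguably cleaner, since the partition-of-unity identities become exact finite telescopes rather than consequences of a normalisation. Three small points deserve care. First, $\varphi\geq0$ requires $\chi$ to be radially non-increasing; your mollified indicator does have this property (a convolution of radially symmetric non-increasing functions is again radially symmetric non-increasing), but this must be said, since an arbitrary $\chi\in\mathcal{D}(B(0,4/3))$ equal to $1$ on $\overline{B}(0,3/4)$ would not do. Second, to place $\supp\varphi$ compactly inside the \emph{open} annulus $\mathcal{C}$ you need $\chi\equiv1$ on a neighbourhood of $\overline{B}(0,3/4)$ and $\supp\chi\Subset B(0,4/3)$; your choice of mollification radius delivers exactly this, but the justification you wrote (``vanishes on $\overline{B}(0,3/4)$'') would only put $\supp\varphi$ in the closed annulus. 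Third, $2^{5}/12=8/3$ exactly, so for $j'=j+5$ the sets $2^{j'}\widetilde{\mathcal{C}}$ and $2^{j}\mathcal{C}$ are separated by the sphere $|\xi|=2^{j}\cdot 8/3$ rather than by a strict gap; they are still disjoint because both annuli are open, but the strict inequality as you stated it is false. None of these affects the validity of the argument, and the remaining verifications (the radii bookkeeping, the identification $\widetilde{\mathcal{C}}=\{1/12<|\xi|<10/3\}$, and the lower bound $a^{2}+b^{2}\geq\tfrac12(a+b)^{2}$ applied to the at most two nonvanishing terms) are sound.
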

We fix $\chi,\varphi$ satisfying Lemma \ref{bahouri-prop-2.10}. For $j\in\mathbb{Z}$ and $u\in\mathcal{S}'(\mathbb{R}^{n})$, we define
\begin{equation*}
    \dot{S}_{j}u := \chi(2^{-j}D)u = \mathcal{F}^{-1}\chi(2^{-j}\xi)\mathcal{F}u,
\end{equation*}
\begin{equation*}
    \dot{\Delta}_{j}u := \varphi(2^{-j}D)u = \mathcal{F}^{-1}\varphi(2^{-j}\xi)\mathcal{F}u,
\end{equation*}
where we recall that the Fourier transform of a compactly supported distribution is a smooth function. We recall the following useful inequalities.
\begin{lemma}\label{useful-inequalities} \cite[Lemmas 2.1-2.4, Remark 2.11]{bahouri2011}.
    Let $\rho$ be a smooth function on $\mathbb{R}^{n}\setminus\{0\}$ which is positive homogeneous of degree $\alpha\in\mathbb{R}$ (meaning that $\rho(\lambda x)=\lambda^{\alpha}\rho(x)$ for all $\lambda\in(0,\infty)$ and $x\in\mathbb{R}^{n}\setminus\{0\}$). Then for all $j\in\mathbb{Z}$, $u\in\mathcal{S}'(\mathbb{R}^{n})$, $t\in(0,\infty)$ and $1\leq p\leq q\leq\infty$ we have
    \begin{equation}\label{useful-inequality-1}
        {\|\dot{S}_{j}u\|}_{L^{p}(\mathbb{R}^{n})}\vee{\|\dot{\Delta}_{j}u\|}_{L^{p}(\mathbb{R}^{n})} \lesssim_{\varphi} {\|u\|}_{L^{p}(\mathbb{R}^{n})},
    \end{equation}
    \begin{equation}\label{useful-inequality-2}
        {\|\rho(D)\dot{\Delta}_{j}u\|}_{L^{q}(\mathbb{R}^{n})} \lesssim_{\rho} 2^{j\alpha}2^{j\left(\frac{n}{p}-\frac{n}{q}\right)}{\|\dot{\Delta}_{j}u\|}_{L^{p}(\mathbb{R}^{n})},
    \end{equation}
    \begin{equation}\label{useful-inequality-3}
        {\|\dot{\Delta}_{j}u\|}_{L^{p}(\mathbb{R}^{n})} \lesssim_{n} 2^{-j}{\|\nabla\dot{\Delta}_{j}u\|}_{L^{p}(\mathbb{R}^{n})},
    \end{equation}
    \begin{equation}\label{basic-heat-estimate}
        {\|e^{t\Delta}\dot{\Delta}_{j}u\|}_{L^{p}(\mathbb{R}^{n})} \lesssim_{n} e^{-c_{n}t2^{2j}}{\|\dot{\Delta}_{j}u\|}_{L^{p}(\mathbb{R}^{n})}.
    \end{equation}
\end{lemma}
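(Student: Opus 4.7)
The four inequalities all rest on the same basic mechanism: the distributions being estimated have Fourier support in a dilated annulus (or ball), so they can be recovered from themselves by convolution with a Schwartz function, and Young's convolution inequality then does all the work. My plan is to set this up once and reuse it.

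First I would fix an auxiliary function $\widetilde{\varphi}\in\mathcal{D}(\mathbb{R}^{n}\setminus\{0\})$ which equals $1$ on the annulus $\mathcal{C}$ (and similarly $\widetilde{\chi}\in\mathcal{D}(B(0,2))$ with $\widetilde{\chi}\equiv 1$ on $B(0,4/3)$), so that for every $j\in\mathbb{Z}$ one has $\widetilde{\varphi}(2^{-j}\cdot)\varphi(2^{-j}\cdot)=\varphi(2^{-j}\cdot)$, and correspondingly for $\chi$. For \eqref{useful-inequality-1} I would then write $\dot{\Delta}_{j}u=h_{j}*u$ with $h_{j}(x)=2^{jn}\check{\varphi}(2^{j}x)$ (and analogously for $\dot{S}_{j}$), observe that $\|h_{j}\|_{L^{1}}=\|\check{\varphi}\|_{L^{1}}$ by scale-invariance of $L^{1}$, and apply Young's inequality. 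This takes care of the first inequality with a constant depending only on $\varphi$ (and $\chi$).

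For the Bernstein-type estimate \eqref{useful-inequality-2}, I would use the cutoff $\widetilde{\varphi}$ to write
\begin{equation*}
    \rho(D)\dot{\Delta}_{j}u \;=\; \rho(D)\widetilde{\varphi}(2^{-j}D)\dot{\Delta}_{j}u \;=\; g_{j}*\dot{\Delta}_{j}u,
\end{equation*}
where $g_{j}=\mathcal{F}^{-1}\!\left[\rho(\xi)\widetilde{\varphi}(2^{-j}\xi)\right]$. Here is the only genuinely delicate point: because $\rho$ is smooth on $\mathbb{R}^{n}\setminus\{0\}$ and homogeneous of degree $\alpha$, the product $\rho\cdot\widetilde{\varphi}$ is smooth and compactly supported away from the origin, hence Schwartz, and the homogeneity produces the exact scaling $g_{j}(x)=2^{j(n+\alpha)}g_{0}(2^{j}x)$ with $g_{0}=\mathcal{F}^{-1}(\rho\widetilde{\varphi})\in\mathcal{S}$. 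Applying Young's inequality with the exponent $r$ determined by $1+\frac{1}{q}=\frac{1}{r}+\frac{1}{p}$, one gets $\|g_{j}\|_{L^{r}}=2^{j\alpha}2^{j(n/p-n/q)}\|g_{0}\|_{L^{r}}$, yielding \eqref{useful-inequality-2}. The implied constant depends on $\rho$ through $\|g_{0}\|_{L^{r}}$, and ultimately on the fixed cutoff $\widetilde{\varphi}$ as well, but the latter is absorbed into the $\lesssim_{\rho}$ notation.

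The remaining two inequalities are just corollaries of this mechanism. For \eqref{useful-inequality-3}, I would pick smooth, compactly-supported-away-from-zero symbols $\rho_{k}(\xi)=\frac{-i\xi_{k}}{|\xi|^{2}}\widetilde{\varphi}(\xi)$, which are positive homogeneous of degree $-1$ on $\mathcal{C}$ (after replacing $\widetilde{\varphi}$ with a cutoff adapted to a single annulus), write $\dot{\Delta}_{j}u=\sum_{k}\rho_{k}(2^{-j}D)\cdot 2^{-j}\nabla_{k}\dot{\Delta}_{j}u$ using that $\widetilde{\varphi}$ is constant $1$ on $\mathrm{supp}\,\varphi$, and apply \eqref{useful-inequality-2} with $p=q$. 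For \eqref{basic-heat-estimate}, I would write $e^{t\Delta}\dot{\Delta}_{j}u=K_{t,j}*\dot{\Delta}_{j}u$ with $K_{t,j}=\mathcal{F}^{-1}\!\left[e^{-t|\xi|^{2}}\widetilde{\varphi}(2^{-j}\xi)\right]$, rescale via $\xi=2^{j}\eta$ to get $K_{t,j}(x)=2^{jn}k_{t2^{2j}}(2^{j}x)$ with $k_{\tau}=\mathcal{F}^{-1}[e^{-\tau|\eta|^{2}}\widetilde{\varphi}(\eta)]$, and then the key point is to check $\|k_{\tau}\|_{L^{1}}\lesssim_{n} e^{-c_{n}\tau}$ uniformly for $\tau\geq 0$. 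The latter follows from writing $e^{-\tau|\eta|^{2}}\widetilde{\varphi}(\eta)=e^{-c_{n}\tau}\cdot e^{-(\tau-c_{n})|\eta|^{2}}\widetilde{\varphi}(\eta)$ on $\mathrm{supp}\,\widetilde{\varphi}$ (where $|\eta|^{2}$ is bounded below by, say, $1/2$, so one chooses $c_{n}<1/2$) and bounding the Schwartz-class family $\mathcal{F}^{-1}[e^{-(\tau-c_{n})|\eta|^{2}}\widetilde{\varphi}(\eta)]$ uniformly in $L^{1}$ for $\tau\geq c_{n}$; for $\tau\in[0,c_{n}]$ there is nothing to prove. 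Then Young's inequality gives \eqref{basic-heat-estimate}. The main obstacle is really just the uniform $L^{1}$ bound on $k_{\tau}$; everything else is scaling and Young.
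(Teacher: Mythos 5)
The paper does not prove this lemma: it is quoted verbatim from Bahouri--Chemin--Danchin (Lemmas 2.1--2.4 and Remark 2.11), and your argument is precisely the standard one used there --- exploit the spectral localisation to write each operator as convolution with a rescaled Schwartz kernel, then apply Young's inequality --- so there is no divergence of method to report. Inequalities \eqref{useful-inequality-1}, \eqref{useful-inequality-2} and \eqref{useful-inequality-3} are handled correctly: the scaling identity $g_{j}(x)=2^{j(n+\alpha)}g_{0}(2^{j}x)$ and the computation $\|g_{j}\|_{L^{r}}=2^{j\alpha}2^{j(n/p-n/q)}\|g_{0}\|_{L^{r}}$ with $1+\frac{1}{q}=\frac{1}{r}+\frac{1}{p}$ are exactly right, and the reduction of \eqref{useful-inequality-3} to \eqref{useful-inequality-2} via the degree $-1$ symbols $-\mathrm{i}\xi_{k}/|\xi|^{2}$ is the intended one.

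There is, however, one genuine error in your treatment of \eqref{basic-heat-estimate}: the factorisation $e^{-\tau|\eta|^{2}}=e^{-c_{n}\tau}\,e^{-(\tau-c_{n})|\eta|^{2}}$ is false (the exponents differ by $c_{n}(|\eta|^{2}-\tau)$, which does not vanish). What you want is $e^{-\tau|\eta|^{2}}=e^{-c_{n}\tau}\,e^{-\tau(|\eta|^{2}-c_{n})}$; since $|\eta|^{2}-c_{n}\geq\tfrac{1}{2}-c_{n}>0$ on $\supp\widetilde{\varphi}$, the family $h_{\tau}(\eta):=e^{-\tau(|\eta|^{2}-c_{n})}\widetilde{\varphi}(\eta)$, $\tau\geq0$, is bounded in $\mathcal{D}(K)$ for a fixed compact $K$ (each derivative produces polynomial factors in $\tau$ which are absorbed by $e^{-\tau(1/2-c_{n})}$), and then $(1+|x|^{2})^{n}|\mathcal{F}^{-1}h_{\tau}(x)|\leq\|(1-\Delta_{\eta})^{n}h_{\tau}\|_{L^{1}}\lesssim_{n}1$ gives the uniform bound $\|\mathcal{F}^{-1}h_{\tau}\|_{L^{1}}\lesssim_{n}1$, hence $\|k_{\tau}\|_{L^{1}}\lesssim_{n}e^{-c_{n}\tau}$ and \eqref{basic-heat-estimate} follows by Young as you intended. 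You should also make this last weighted integration-by-parts step explicit rather than asserting the uniform $L^{1}$ bound for a ``Schwartz-class family'': uniform boundedness in $L^{1}$ of the inverse Fourier transforms is exactly the point at issue and does not follow from each member being Schwartz.
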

One can give meaning to the decomposition $u=\sum_{j\in\mathbb{Z}}\dot{\Delta}_{j}u$ in view of the following lemma.
\begin{lemma}\label{littlewood-paley-decomposition} \cite[Definition 1.26, Propositions 2.12-2.14]{bahouri2011}.
    If $u\in\mathcal{S}'(\mathbb{R}^{n})$, then $\dot{S}_{j}u\overset{j\rightarrow\infty}{\rightarrow}u$ in $\mathcal{S}'(\mathbb{R}^{n})$. Define
    \begin{equation*}
        \mathcal{S}_{h}'(\mathbb{R}^{n}) := \left\{u\in\mathcal{S}'(\mathbb{R}^{n})\text{ }:\text{ }{\|\dot{S}_{j}u\|}_{L^{\infty}(\mathbb{R}^{n})}\overset{j\rightarrow-\infty}{\rightarrow}0\right\}.
    \end{equation*}
    (For example, if $\mathcal{F}u$ is locally integrable near $\xi=0$, then $u\in\mathcal{S}_{h}'(\mathbb{R}^{n})$. The condition $u\in\mathcal{S}_{h}'(\mathbb{R}^{n})$ is independent of our choice of $\varphi$.). If $u\in\mathcal{S}_{h}'(\mathbb{R}^{n})$, then $u=\sum_{j\in\mathbb{Z}}\dot{\Delta}_{j}u$ in $\mathcal{S}'(\mathbb{R}^{n})$.
\end{lemma}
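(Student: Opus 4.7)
The plan is to prove the convergence $\dot{S}_j u \to u$ first, then deduce the Littlewood-Paley decomposition for $u \in \mathcal{S}_h'(\mathbb{R}^n)$ via a telescoping identity, and finally verify the two parenthetical claims. The main technical obstacle will be the Schwartz-seminorm estimate underlying the first step; everything else reduces to the partition-of-unity identities of Lemma \ref{bahouri-prop-2.10} and elementary duality bounds.

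For the convergence $\dot{S}_j u \to u$ in $\mathcal{S}'(\mathbb{R}^n)$ as $j \to \infty$, I would argue by duality: since $\chi$ is real-valued and radial we have $\langle \dot{S}_j u, \phi \rangle = \langle u, \dot{S}_j \phi \rangle$ for $\phi \in \mathcal{S}(\mathbb{R}^n)$, so it suffices to show $\dot{S}_j \phi \to \phi$ in $\mathcal{S}(\mathbb{R}^n)$. On the Fourier side this amounts to showing $(1 - \chi(2^{-j}\xi))\hat\phi(\xi) \to 0$ in $\mathcal{S}$. Because $\chi \equiv 1$ on a neighborhood of $0$ (recall $\chi(0) = 1$ by the partition of unity), the support of $1 - \chi(2^{-j}\cdot)$ is contained in $\{|\xi| \geq c\cdot 2^j\}$ for some fixed $c > 0$. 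Applying Leibniz to $\xi^\alpha \partial^\beta$ and noting that every derivative falling on $\chi(2^{-j}\cdot)$ produces a harmless factor $2^{-j|\beta'|}$, the rapid decay of $\partial^{\beta - \beta'}\hat\phi$ at infinity gives convergence to zero in every Schwartz seminorm; continuity of $\mathcal{F}^{-1}$ on $\mathcal{S}$ then completes the step.

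For the decomposition, the key is the telescoping identity $\dot{S}_j u - \dot{S}_{j'} u = \sum_{k = j'}^{j-1} \dot{\Delta}_k u$ for integers $j' < j$, which follows from the pointwise identity $\chi(2^{-j}\xi) - \chi(2^{-j'}\xi) = \sum_{k = j'}^{j-1} \varphi(2^{-k}\xi)$ obtained by iterating the relation $\chi = 1 - \sum_{k \geq 0} \varphi(2^{-k}\cdot)$ of Lemma \ref{bahouri-prop-2.10}. Sending $j \to \infty$ in $\mathcal{S}'$ produces $u$ by the first part, while sending $j' \to -\infty$ produces $0$ via the elementary bound $|\langle \dot{S}_{j'} u, \phi \rangle| \leq \|\dot{S}_{j'} u\|_{L^\infty(\mathbb{R}^n)}\|\phi\|_{L^1(\mathbb{R}^n)} \to 0$, which uses precisely the defining property of $\mathcal{S}_h'(\mathbb{R}^n)$. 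Hence $u = \sum_{k \in \mathbb{Z}} \dot{\Delta}_k u$ in $\mathcal{S}'(\mathbb{R}^n)$.

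For the parenthetical claims: if $\mathcal{F} u$ is locally integrable near $0$, then for all sufficiently negative $j$, $\supp \chi(2^{-j}\cdot) \subseteq B(0, 2^j\cdot 4/3)$ lies within the region of integrability, so $\|\dot{S}_j u\|_{L^\infty(\mathbb{R}^n)} \leq \|\chi(2^{-j}\cdot)\,\mathcal{F} u\|_{L^1(\mathbb{R}^n)}$ tends to $0$ by dominated convergence as the ball shrinks. For independence of the cutoff, given two admissible pairs $(\chi_1, \varphi_1)$ and $(\chi_2, \varphi_2)$, one picks $N \in \mathbb{Z}$ large enough that $\chi_1(2^{-j-N}\cdot) \equiv 1$ on $\supp\chi_2(2^{-j}\cdot)$; then $\dot{S}_j^{(2)} u = \dot{S}_j^{(2)} \dot{S}_{j+N}^{(1)} u$, and \eqref{useful-inequality-1} yields $\|\dot{S}_j^{(2)} u\|_{L^\infty(\mathbb{R}^n)} \lesssim_{\varphi_2} \|\dot{S}_{j+N}^{(1)} u\|_{L^\infty(\mathbb{R}^n)}$, so the $L^\infty$-decay transfers symmetrically between the two choices.
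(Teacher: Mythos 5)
The paper offers no proof of this lemma, only the citation to \cite[Propositions 2.12--2.14]{bahouri2011}, and your argument is precisely the standard one given there: duality plus Schwartz-seminorm convergence for $\dot{S}_{j}u\rightarrow u$, the telescoping identity $\dot{S}_{j}u-\dot{S}_{j'}u=\sum_{k=j'}^{j-1}\dot{\Delta}_{k}u$ combined with the defining decay of $\mathcal{S}_{h}'$ for the decomposition, and absolute continuity of the integral together with the almost-orthogonality trick $\dot{S}_{j}^{(2)}u=\dot{S}_{j}^{(2)}\dot{S}_{j+N}^{(1)}u$ for the two parenthetical claims. Your proof is correct and complete.
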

For $s\in\mathbb{R}$ and $p,q\in[1,\infty]$, we define the Besov seminorm
\begin{equation*}
    {\|u\|}_{\dot{B}_{p,q}^{s}(\mathbb{R}^{n})} := {\left\|j\mapsto2^{js}{\|\dot{\Delta}_{j}u\|}_{L^{p}(\mathbb{R}^{n})}\right\|}_{l^{q}(\mathbb{Z})} \quad \text{for }u\in\mathcal{S}'(\mathbb{R}^{n}),
\end{equation*}
where choosing a different function $\varphi$ yields an equivalent seminorm \cite[Remark 2.17]{bahouri2011}. We also define the Besov space
\begin{equation*}
    \dot{B}_{p,q}^{s}(\mathbb{R}^{n}) := \left\{u\in\mathcal{S}_{h}'(\mathbb{R}^{n})\text{ : }{\|u\|}_{\dot{B}_{p,q}^{s}(\mathbb{R}^{n})}<\infty\right\},
\end{equation*}
so that $\left(\dot{B}_{p,q}^{s}(\mathbb{R}^{n}),{\|\cdot\|}_{\dot{B}_{p,q}^{s}(\mathbb{R}^{n})}\right)$ is a normed space \cite[Proposition 2.16]{bahouri2011}. Lemma \ref{useful-inequalities} and Lemma \ref{littlewood-paley-decomposition} yield the inequalities
\begin{equation}\label{besov-embedding}
    {\|u\|}_{\dot{B}_{p_{2},q_{2}}^{\frac{n}{p_{2}}+\epsilon}(\mathbb{R}^{n})} \lesssim_{n} {\|u\|}_{\dot{B}_{p_{1},q_{1}}^{\frac{n}{p_{1}}+\epsilon}(\mathbb{R}^{n})} \quad \text{for }p_{1}\leq p_{2},\,q_{1}\leq\,q_{2},\,\epsilon\in\mathbb{R},\,u\in\mathcal{S}'(\mathbb{R}^{n}),
\end{equation}
\begin{equation}\label{rough-lp}
    {\|u\|}_{\dot{B}_{p,\infty}^{0}(\mathbb{R}^{n})} \lesssim_{\varphi} {\|u\|}_{L^{p}(\mathbb{R}^{n})} \quad \text{for }u\in\mathcal{S}'(\mathbb{R}^{n}),
\end{equation}
\begin{equation}\label{smooth-lp}
    {\|u\|}_{L^{p}(\mathbb{R}^{n})} \leq {\|u\|}_{\dot{B}_{p,1}^{0}(\mathbb{R}^{n})} \quad \text{for }u\in\mathcal{S}_{h}'(\mathbb{R}^{n}).
\end{equation}
We also have, for $\lambda\in(0,1)$ and $u\in\mathcal{S}'(\mathbb{R}^{n})$, the interpolation inequalities
\begin{equation}\label{interpolation-holder}
    {\|u\|}_{\dot{B}_{\frac{p_{1}p_{2}}{\lambda p_{2}+(1-\lambda)p_{1}},\frac{q_{1}q_{2}}{\lambda q_{2}+(1-\lambda)q_{1}}}^{\lambda s_{1}+(1-\lambda)s_{2}}(\mathbb{R}^{n})} \leq {\|u\|}_{\dot{B}_{p_{1},q_{1}}^{s_{1}}(\mathbb{R}^{n})}^{\lambda}{\|u\|}_{\dot{B}_{p_{2},q_{2}}^{s_{2}}(\mathbb{R}^{n})}^{1-\lambda},
\end{equation}
\begin{equation}\label{interpolation-geometric}
    {\|u\|}_{\dot{B}_{p,1}^{\lambda s_{1}+(1-\lambda)s_{2}}(\mathbb{R}^{n})} \lesssim \frac{1}{\lambda(1-\lambda)(s_{2}-s_{1})}{\|u\|}_{\dot{B}_{p,\infty}^{s_{1}}(\mathbb{R}^{n})}^{\lambda}{\|u\|}_{\dot{B}_{p,\infty}^{s_{2}}(\mathbb{R}^{n})}^{1-\lambda} \quad \text{for }s_{1}<s_{2},
\end{equation}
where \eqref{interpolation-holder} comes from H\"{o}lder's inequality, while \eqref{interpolation-geometric} comes from writing $\sum_{j\in\mathbb{Z}}=\sum_{j\leq j_{0}}+\sum_{j>j_{0}}$ with $2^{j_{0}(s_{2}-s_{1})}{\|u\|}_{\dot{B}_{p,\infty}^{s_{1}}(\mathbb{R}^{n})}\approx{\|u\|}_{\dot{B}_{p,\infty}^{s_{2}}(\mathbb{R}^{n})}$ and applying geometric series.

We now recall the following convergence properties.
\begin{lemma}\label{convergence-lemma} \cite[Lemma 2.23]{bahouri2011}.
    Let $\mathcal{C}'$ be an annulus and ${(u_{j})}_{j\in\mathbb{Z}}$ be a sequence of functions such that $\supp\mathcal{F}u_{j}\subseteq2^{j}\mathcal{C}'$ and ${\left\|j\mapsto2^{js}{\|u_{j}\|}_{L^{p}(\mathbb{R}^{n})}\right\|}_{l^{q}(\mathbb{Z})}<\infty$. If the series $\sum_{j\in\mathbb{Z}}u_{j}$ converges in $\mathcal{S}'(\mathbb{R}^{n})$ to some $u\in\mathcal{S}'(\mathbb{R}^{n})$, then
    \begin{equation*}\label{convergence-inequality}
        {\|u\|}_{\dot{B}_{p,q}^{s}(\mathbb{R}^{n})} \lesssim_{\varphi} C_{\mathcal{C}'}^{1+|s|}{\left\|j\mapsto2^{js}{\|u_{j}\|}_{L^{p}(\mathbb{R}^{n})}\right\|}_{l^{q}(\mathbb{Z})}.
    \end{equation*}
    Note: If $(s,p,q)$ satisfy the condition
    \begin{equation}\label{negative-scaling}
        s<\frac{n}{p}, \quad \text{or} \quad s=\frac{n}{p}\text{ and }q=1,
    \end{equation}
    then the hypothesis of convergence is satisfied, and $u\in\mathcal{S}_{h}'(\mathbb{R}^{n})$.
\end{lemma}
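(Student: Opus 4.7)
The key idea is almost-orthogonality: since $\dot{\Delta}_{j}u_{j'}=\varphi(2^{-j}D)u_{j'}$ has Fourier support contained in $2^{j}\mathcal{C}\cap2^{j'}\mathcal{C}'$, and since $\mathcal{C}$ and $\mathcal{C}'$ are fixed annuli, there exists $N_{0}=N_{0}(\mathcal{C}')$ (depending only on the ratio of inner/outer radii) such that $\dot{\Delta}_{j}u_{j'}=0$ whenever $|j-j'|\geq N_{0}$. Therefore, assuming $\sum_{j'}u_{j'}\to u$ in $\mathcal{S}'(\mathbb{R}^{n})$, we may apply $\dot{\Delta}_{j}$ termwise and obtain the finite sum $\dot{\Delta}_{j}u=\sum_{|j'-j|<N_{0}}\dot{\Delta}_{j}u_{j'}$. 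By \eqref{useful-inequality-1}, ${\|\dot{\Delta}_{j}u_{j'}\|}_{L^{p}}\lesssim_{\varphi}{\|u_{j'}\|}_{L^{p}}$, so
\begin{equation*}
    2^{js}{\|\dot{\Delta}_{j}u\|}_{L^{p}} \lesssim_{\varphi} \sum_{|j'-j|<N_{0}}2^{(j-j')s}\cdot2^{j's}{\|u_{j'}\|}_{L^{p}} \lesssim 2^{N_{0}|s|}\sum_{|j'-j|<N_{0}}2^{j's}{\|u_{j'}\|}_{L^{p}}.
\end{equation*}
Taking the $l^{q}(\mathbb{Z})$ norm in $j$ and using Young's convolution inequality (the right-hand side is the convolution of $2^{j's}{\|u_{j'}\|}_{L^{p}}$ with the indicator of $\{|k|<N_{0}\}$) yields the desired estimate with constant $\lesssim_{\varphi} N_{0}\cdot 2^{N_{0}|s|}$, which I would bound by some $C_{\mathcal{C}'}^{1+|s|}$.

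For the Note, under \eqref{negative-scaling} I need to show (a) $\sum_{j}u_{j}$ converges in $\mathcal{S}'(\mathbb{R}^{n})$, and (b) the limit lies in $\mathcal{S}_{h}'(\mathbb{R}^{n})$. For the low-frequency tail $\sum_{j\leq 0}u_{j}$, Bernstein's inequality \eqref{useful-inequality-2} (taking $\rho\equiv1$) gives ${\|u_{j}\|}_{L^{\infty}}\lesssim 2^{jn/p}{\|u_{j}\|}_{L^{p}}$, so
\begin{equation*}
    \sum_{j\leq0}{\|u_{j}\|}_{L^{\infty}} \lesssim \sum_{j\leq0}2^{j(n/p-s)}\cdot 2^{js}{\|u_{j}\|}_{L^{p}},
\end{equation*}
which is finite in the case $s<n/p$ (by Hölder in $j$) and also in the case $s=n/p$, $q=1$ (directly). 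For the high-frequency tail $\sum_{j\geq0}u_{j}$, I write $u_{j}=\psi(2^{-j}D)u_{j}$ for a fixed $\psi\in\mathcal{D}(\mathbb{R}^{n}\setminus\{0\})$ equal to $1$ on $\mathcal{C}'$, so that for any $\phi\in\mathcal{S}(\mathbb{R}^{n})$, $\langle u_{j},\phi\rangle=\langle u_{j},\psi(2^{-j}D)\phi\rangle$; integrating by parts (or equivalently, moving powers of $|D|^{-N}$ off $\phi$) exploits the vanishing of $\widehat{\phi}$-derivatives near $0$ only through the localisation of $\psi$, giving $|\langle u_{j},\phi\rangle|\lesssim_{N}2^{-jN}{\|u_{j}\|}_{L^{p}}\cdot p_{N}(\phi)$ for arbitrarily large $N$, which gives summable decay since $2^{-jN}{\|u_{j}\|}_{L^{p}}$ decays geometrically once $N>s$.

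Finally, to verify $u\in\mathcal{S}_{h}'(\mathbb{R}^{n})$, I apply $\dot{S}_{j}$ to $u=\sum_{j'}u_{j'}$; by the support properties of $\chi$, only terms with $j'\lesssim j$ contribute, so $\dot{S}_{j}u=\sum_{j'\leq j+C}\chi(2^{-j}D)u_{j'}$. Applying Bernstein once more gives ${\|\dot{S}_{j}u\|}_{L^{\infty}}\lesssim\sum_{j'\leq j+C}2^{j'n/p}{\|u_{j'}\|}_{L^{p}}$, which is the tail of a convergent series (by the same Hölder/$l^{1}$ argument as above) and hence tends to $0$ as $j\to-\infty$. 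The main obstacle here is the high-frequency $\mathcal{S}'$-convergence argument, which is purely technical but requires carefully tracking that $\widehat{u}_{j}$ is supported away from the origin; everything else is a clean application of almost-orthogonality and Bernstein.
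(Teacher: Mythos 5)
Your proof is correct, and it is essentially the argument the paper relies on: the paper does not prove this lemma itself but cites \cite[Lemma 2.23]{bahouri2011}, whose proof is exactly your almost-orthogonality computation for the main estimate, with the Note handled by the same Bernstein argument for the low frequencies and the same ``gain powers of $2^{-j}$ from the spectral gap at the origin'' argument for the high frequencies. The only slip is cosmetic: since ${\|u_{j}\|}_{L^{p}}\lesssim2^{-js}$, the quantity $2^{-jN}{\|u_{j}\|}_{L^{p}}$ is summable over $j\geq0$ once $N>-s$ (not $N>s$), which is harmless because you may take $N$ as large as you like.
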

\begin{lemma}\label{Banach-space} \cite[Theorem 2.25]{bahouri2011}.
    If $(s_{1},p_{1},q_{1})$ satisfy \eqref{negative-scaling}, then the space $\dot{B}_{p_{1},q_{1}}^{s_{1}}(\mathbb{R}^{n})\cap\dot{B}_{p_{2},q_{2}}^{s_{2}}(\mathbb{R}^{n})$, when equipped with the norm ${\|u\|}_{\dot{B}_{p_{1},q_{1}}^{s_{1}}(\mathbb{R}^{n})\cap\dot{B}_{p_{2},q_{2}}^{s_{2}}(\mathbb{R}^{n})}:={\|u\|}_{\dot{B}_{p_{1},q_{1}}^{s_{1}}(\mathbb{R}^{n})}\vee{\|u\|}_{\dot{B}_{p_{2},q_{2}}^{s_{2}}(\mathbb{R}^{n})}$, is a Banach space.
\end{lemma}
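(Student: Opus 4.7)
The plan is to construct the limit via its Littlewood--Paley pieces, using Lemma \ref{convergence-lemma} crucially to transfer between series convergence in $\mathcal{S}'$ and Besov norm convergence. Since the only element of $\mathcal{S}_{h}'(\mathbb{R}^{n})$ annihilated by all $\dot{\Delta}_{j}$ is zero (by the decomposition in Lemma \ref{littlewood-paley-decomposition}), each Besov seminorm is already a norm on $\mathcal{S}_{h}'(\mathbb{R}^{n})$, so the max-norm on the intersection is indeed a norm.

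For completeness, let ${(u_{k})}_{k}$ be Cauchy in the intersection. For each fixed $j\in\mathbb{Z}$, the definition of the Besov seminorm together with the Cauchy hypothesis in $\dot{B}_{p_{1},q_{1}}^{s_{1}}$ shows that ${(\dot{\Delta}_{j}u_{k})}_{k}$ is Cauchy in $L^{p_{1}}(\mathbb{R}^{n})$, so it converges to some $v_{j}\in L^{p_{1}}$ satisfying $\supp\mathcal{F}v_{j}\subseteq 2^{j}\mathcal{C}$. Fatou applied to the $l^{q_{1}}(\mathbb{Z})$ sum yields ${(2^{js_{1}}{\|v_{j}\|}_{L^{p_{1}}})}_{j}\in l^{q_{1}}(\mathbb{Z})$, bounded by $\sup_{k}{\|u_{k}\|}_{\dot{B}_{p_{1},q_{1}}^{s_{1}}}$.

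Because $(s_{1},p_{1},q_{1})$ satisfies \eqref{negative-scaling}, Lemma \ref{convergence-lemma} then guarantees that $\sum_{j\in\mathbb{Z}}v_{j}$ converges in $\mathcal{S}'(\mathbb{R}^{n})$ to some $u\in\mathcal{S}_{h}'(\mathbb{R}^{n})$ with finite $\dot{B}_{p_{1},q_{1}}^{s_{1}}$-norm. To show $u_{k}\to u$ in $\dot{B}_{p_{1},q_{1}}^{s_{1}}$, I use the expansion $u_{k}=\sum_{j}\dot{\Delta}_{j}u_{k}$ in $\mathcal{S}'$ (valid since $u_{k}\in\mathcal{S}_{h}'$) to write $u-u_{k}=\sum_{j}(v_{j}-\dot{\Delta}_{j}u_{k})$ in $\mathcal{S}'$, and apply Lemma \ref{convergence-lemma} a second time; the resulting bound ${\|u-u_{k}\|}_{\dot{B}_{p_{1},q_{1}}^{s_{1}}}\lesssim_{\varphi}\liminf_{k'\to\infty}{\|u_{k'}-u_{k}\|}_{\dot{B}_{p_{1},q_{1}}^{s_{1}}}$ vanishes as $k\to\infty$ by the Cauchy hypothesis.

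For the second Besov space the argument is analogous, but one must verify that the candidate limit is the same $u$. The Cauchy property in $\dot{B}_{p_{2},q_{2}}^{s_{2}}$ gives $\dot{\Delta}_{j}u_{k}\to\widetilde{v}_{j}$ in $L^{p_{2}}$ for some $\widetilde{v}_{j}$, while the convergence $u_{k}\to u$ in $\dot{B}_{p_{1},q_{1}}^{s_{1}}$ implies convergence in $\mathcal{S}'$ (again via condition \eqref{negative-scaling} and Lemma \ref{convergence-lemma}), so the $\mathcal{S}'$-continuity of $\dot{\Delta}_{j}$ forces $\widetilde{v}_{j}=\dot{\Delta}_{j}u$ almost everywhere; one final application of Lemma \ref{convergence-lemma} then yields $u_{k}\to u$ in $\dot{B}_{p_{2},q_{2}}^{s_{2}}$. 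The main obstacle is precisely this identification of a common limit $u\in\mathcal{S}_{h}'$ serving both norms simultaneously: condition \eqref{negative-scaling} on $(s_{1},p_{1},q_{1})$ is indispensable, as it is what promotes the $l^{q_{1}}$-summable Littlewood--Paley pieces into a bona fide element of $\mathcal{S}_{h}'$ and supplies the $\mathcal{S}'$ convergence needed to pin down the $L^{p_{2}}$ limits.
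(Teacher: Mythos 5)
Your argument is correct and is essentially the standard proof of \cite[Theorem 2.25]{bahouri2011}, which the paper simply cites rather than reproving: blockwise Cauchyness gives $L^{p_1}$ limits $v_j$, Fatou controls the $l^{q}$ sums, and Lemma \ref{convergence-lemma} together with \eqref{negative-scaling} assembles the limit in $\mathcal{S}_h'$ and yields convergence in both norms. The only cosmetic point is that the step ``convergence in $\dot{B}_{p_{1},q_{1}}^{s_{1}}$ implies convergence in $\mathcal{S}'$'' really rests on the duality estimate of Lemma \ref{convergence-duality} rather than on Lemma \ref{convergence-lemma} itself (alternatively it can be bypassed entirely by noting that the blockwise $L^{p_1}$ limit $v_j$ and $L^{p_2}$ limit $\widetilde{v}_j$ of $\dot{\Delta}_{j}u_{k}$ already coincide in $\mathcal{S}'$).
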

A useful consequence of Lemma \ref{convergence-lemma} is that if $u\in\mathcal{S}'(\mathbb{R}^{n})$ satisfies ${\|u\|}_{\dot{B}_{p,q}^{s}(\mathbb{R}^{n})}<\infty$ for some $(s,p,q)$ satisfying \eqref{negative-scaling}, then $u\in\mathcal{S}_{h}'(\mathbb{R}^{n})$. For example, \cite[Theorem 2.34]{bahouri2011} states that the characterisation
\begin{equation}\label{besov-heat-characterisation}
    {\|u\|}_{\dot{B}_{\infty,\infty}^{s}(\mathbb{R}^{n})} \approx_{\varphi,s} \sup_{t\in(0,\infty)}t^{-s/2}{\|e^{t\Delta}u\|}_{L^{\infty}(\mathbb{R}^{n})} \quad \text{for }s<0
\end{equation}
holds for all $u\in\mathcal{S}_{h}'(\mathbb{R}^{n})$; Lemma \ref{convergence-lemma} allows us to extend \eqref{besov-heat-characterisation} to all $u\in\mathcal{S}'(\mathbb{R}^{n})$.

In duality with Lemma \ref{convergence-lemma}, the following lemma tells us that the Schwartz space $\mathcal{S}(\mathbb{R}^{n})$ embeds continuously into certain Besov spaces.
\begin{lemma}\label{convergence-duality}
    Define ${\|\phi\|}_{\alpha,\beta}:=\sup_{x\in\mathbb{R}^{n}}|x^{\alpha}\nabla^{\beta}\phi(x)|$ for $\phi\in\mathcal{S}(\mathbb{R}^{n})$ and $\alpha,\beta\in\mathbb{Z}_{\geq0}^{n}$. Then for any $(s,p,q)$ satisfying \eqref{negative-scaling}, there exists a finite subset $A_{s,p}\subseteq\mathbb{Z}_{\geq0}^{n}\times\mathbb{Z}_{\geq0}^{n}$ satisfying
    \begin{equation*}
        {\|\phi\|}_{\dot{B}_{p',q'}^{-s}(\mathbb{R}^{n})} \lesssim_{\varphi,s,p} \sum_{(\alpha,\beta)\in A_{s,p}}{\|\phi\|}_{\alpha,\beta} \quad \text{for }\phi\in\mathcal{S}(\mathbb{R}^{n}).
    \end{equation*}
\end{lemma}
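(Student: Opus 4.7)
The plan is to split the $\ell^{q'}(\mathbb{Z})$ sum defining $\|\phi\|_{\dot{B}_{p',q'}^{-s}(\mathbb{R}^n)}$ into a low-frequency piece ($j \leq 0$) and a high-frequency piece ($j \geq 1$), and to estimate each by a fixed finite collection of Schwartz seminorms $\|\phi\|_{\alpha,\beta}$, using Bernstein-type estimates on the low end and smoothness of $\phi$ on the high end.

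For the low-frequency piece I would apply \eqref{useful-inequality-2} with the constant multiplier $\rho \equiv 1$ (positive homogeneous of degree $0$), going from $L^1$ to $L^{p'}$, followed by \eqref{useful-inequality-1}, to obtain
\[
\|\dot{\Delta}_j\phi\|_{L^{p'}(\mathbb{R}^n)} \lesssim_{\varphi} 2^{jn/p}\|\phi\|_{L^1(\mathbb{R}^n)}.
\]
Weighted by $2^{-js}$ this produces $2^{j(n/p - s)}\|\phi\|_{L^1(\mathbb{R}^n)}$. Under \eqref{negative-scaling}, either the exponent $n/p - s$ is strictly positive (so the tail $j \to -\infty$ is summable in $\ell^{q'}$ for every $q'$), or the exponent vanishes and $q = 1$, i.e.\ $q' = \infty$, in which case the supremum over $j \leq 0$ is bounded by $\|\phi\|_{L^1(\mathbb{R}^n)}$.

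For the high-frequency piece I would iterate \eqref{useful-inequality-3} a total of $N$ times (combined with \eqref{useful-inequality-1}) to get $\|\dot{\Delta}_j\phi\|_{L^{p'}(\mathbb{R}^n)} \lesssim_n 2^{-jN}\|\nabla^N\phi\|_{L^{p'}(\mathbb{R}^n)}$. Choosing any integer $N > -s$, the weight $2^{-j(s+N)}$ has convergent $\ell^{q'}$ norm over $j \geq 1$, bounded by a geometric series.

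Finally, I would convert $\|\phi\|_{L^1(\mathbb{R}^n)}$ and $\|\nabla^N\phi\|_{L^{p'}(\mathbb{R}^n)}$ into finite sums of the seminorms $\|\phi\|_{\alpha,\beta}$ by the standard weight trick: fix a positive integer $M$ with $M > n$ and $Mp' > n$, use $|\nabla^\beta\phi(x)| \leq (1+|x|)^{-M}\sup_y (1+|y|)^M|\nabla^\beta\phi(y)|$, bound the supremum by a finite sum of $\|\phi\|_{\alpha,\beta}$ with $|\alpha| \leq M$, and integrate. Taking $A_{s,p} := \{(\alpha,0) : |\alpha|\leq M\} \cup \{(\alpha,\beta) : |\alpha|\leq M,\,|\beta| = N\}$ gives the required finite set. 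The only subtle point is the borderline case $s = n/p$, $q = 1$, where one must exploit $q' = \infty$ to convert what would otherwise be a divergent low-frequency series into a harmless supremum; apart from this, the argument is routine bookkeeping.
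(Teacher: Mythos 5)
Your argument is correct and is essentially the paper's proof unwound by hand: the paper reduces to $L^1$-based spaces via the embedding \eqref{besov-embedding}, invokes the interpolation \eqref{interpolation-geometric} between $\dot{B}_{1,\infty}^{0}(\mathbb{R}^{n})$ and $\dot{B}_{1,\infty}^{k}(\mathbb{R}^{n})$ (which is precisely a split of the dyadic sum into low and high frequencies), and bounds the resulting $L^1$ norms of derivatives by Schwartz seminorms via the same weight trick. Your explicit split at $j=0$, with Bernstein on the low end and iterated \eqref{useful-inequality-3} on the high end, produces the same two ingredients, and you treat the borderline case $s=n/p$, $q=1$ exactly as the paper does, by exploiting $q'=\infty$.
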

\begin{proof}
    By \eqref{useful-inequality-3} and \eqref{rough-lp}, for all $k\in\mathbb{Z}_{\geq0}$ and $\phi\in\mathcal{S}(\mathbb{R}^{n})$ we have
    \begin{equation}\label{convergence-duality-proof}
        {\|\phi\|}_{\dot{B}_{1,\infty}^{k}(\mathbb{R}^{n})} \lesssim_{\varphi} \sum_{|\beta|=k}{\|\nabla^{\beta}\phi\|}_{L^{1}(\mathbb{R}^{n})} \lesssim_{n} \sum_{\substack{|\alpha|\leq n+1 \\ |\beta|=k}}{\|\phi\|}_{\alpha,\beta}.
    \end{equation}
    By the embedding \eqref{besov-embedding}, to prove the lemma it suffices to control ${\|\phi\|}_{\dot{B}_{1,q'}^{\frac{n}{p}-s}(\mathbb{R}^{n})}$. In the case $s=\frac{n}{p}$ and $q=1$, this means we need to control ${\|\phi\|}_{\dot{B}_{1,\infty}^{0}(\mathbb{R}^{n})}$, which is achieved by taking $k=0$ in \eqref{convergence-duality-proof}. In the case $s<\frac{n}{p}$, we choose an integer $k$ satisfying $k>\frac{n}{p}-s$, and we use \eqref{besov-embedding} and \eqref{interpolation-geometric} to estimate
    \begin{equation*}
        {\|\phi\|}_{\dot{B}_{1,q'}^{\frac{n}{p}-s}(\mathbb{R}^{n})} \leq {\|\phi\|}_{\dot{B}_{1,1}^{\frac{n}{p}-s}(\mathbb{R}^{n})} \lesssim_{n,s,p} {\|\phi\|}_{\dot{B}_{1,\infty}^{0}(\mathbb{R}^{n})\cap\dot{B}_{1,\infty}^{k}(\mathbb{R}^{n})},
    \end{equation*}
    where ${\|\phi\|}_{\dot{B}_{1,\infty}^{0}(\mathbb{R}^{n})\cap\dot{B}_{1,\infty}^{k}(\mathbb{R}^{n})}$ can be controlled using \eqref{convergence-duality-proof}.
\end{proof}
If $u\in\dot{B}_{p_{1},1}^{0}(\mathbb{R}^{n})$ and $v\in\dot{B}_{p_{2},1}^{0}(\mathbb{R}^{n})$ with $\frac{1}{p_{1}}+\frac{1}{p_{2}}\leq1$, then the series $uv=\sum_{(j,j')\in\mathbb{Z}^{2}}\dot{\Delta}_{j}u\,\dot{\Delta}_{j'}v$ converges absolutely in $L^{\frac{p_{1}p_{2}}{p_{1}+p_{2}}}(\mathbb{R}^{n})$, which justifies the Bony decomposition
\begin{equation*}
    uv = \dot{T}_{u}v+\dot{T}_{v}u+\dot{R}(u,v),
\end{equation*}
\begin{equation*}
    \dot{T}_{u}v = \sum_{j\in\mathbb{Z}}\dot{S}_{j-1}u\,\dot{\Delta}_{j}v,
\end{equation*}
\begin{equation*}
    \dot{R}(u,v) = \sum_{j\in\mathbb{Z}}\sum_{|\nu|\leq1}\dot{\Delta}_{j}u\,\dot{\Delta}_{j-\nu}v.
\end{equation*}
We will require the following estimates for the operators $\dot{T}$ and $\dot{R}$.
\begin{lemma}\label{paraproduct} \cite[Theorem 2.47]{bahouri2011}.
    Suppose that $s=s_{1}+s_{2}$, $p=\frac{p_{1}p_{2}}{p_{1}+p_{2}}$ and $q=\frac{q_{1}q_{2}}{q_{1}+q_{2}}$. Let $u,v\in\mathcal{S}'(\mathbb{R}^{n})$, and assume that the series $\sum_{j\in\mathbb{Z}}\dot{S}_{j-1}u\,\dot{\Delta}_{j}v$ converges in $\mathcal{S}'(\mathbb{R}^{n})$ to some $\dot{T}_{u}v\in\mathcal{S}'(\mathbb{R}^{n})$. Then
    \begin{equation}\label{bony-estimate-1}
        {\|\dot{T}_{u}v\|}_{\dot{B}_{p,q}^{s}(\mathbb{R}^{n})} \lesssim_{\varphi} C_{n}^{1+|s|}{\|u\|}_{L^{p_{1}}(\mathbb{R}^{n})}{\|v\|}_{\dot{B}_{p_{2},q}^{s}(\mathbb{R}^{n})},
    \end{equation}
    \begin{equation}\label{bony-estimate-2}
        {\|\dot{T}_{u}v\|}_{\dot{B}_{p,q}^{s}(\mathbb{R}^{n})} \lesssim_{\varphi} \frac{C_{n}^{1+|s|}}{-s_{1}}{\|u\|}_{\dot{B}_{p_{1},q_{1}}^{s_{1}}(\mathbb{R}^{n})}{\|v\|}_{\dot{B}_{p_{2},q_{2}}^{s_{2}}(\mathbb{R}^{n})} \quad \text{if } s_{1}<0.
    \end{equation}
    Note: If $(s,p,q)$ satisfy \eqref{negative-scaling}, and the right hand side of either \eqref{bony-estimate-1} or \eqref{bony-estimate-2} is finite, then the hypothesis of convergence is satisfied, and $\dot{T}_{u}v\in\mathcal{S}_{h}'(\mathbb{R}^{n})$.
\end{lemma}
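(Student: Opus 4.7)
The plan is to apply Lemma \ref{convergence-lemma} to the series $\dot{T}_u v=\sum_j \dot{S}_{j-1}u\,\dot{\Delta}_j v$. The key Fourier-support observation is that $\mathcal{F}(\dot{S}_{j-1}u)$ is supported in a ball of radius $\sim 2^{j-1}$, while $\mathcal{F}(\dot{\Delta}_j v)$ is supported in $2^j\mathcal{C}$, so the convolution $\mathcal{F}(\dot{S}_{j-1}u\,\dot{\Delta}_j v)$ is supported in $2^j\mathcal{C}'$ for an annulus $\mathcal{C}'$ depending only on $n$. Lemma \ref{convergence-lemma} therefore reduces the problem to estimating
\[
\Big\|\,j\mapsto 2^{js}\|\dot{S}_{j-1}u\,\dot{\Delta}_j v\|_{L^p(\mathbb{R}^n)}\,\Big\|_{l^q(\mathbb{Z})},
\]
with the prefactor $C_{\mathcal{C}'}^{1+|s|}=C_n^{1+|s|}$ produced by that lemma; it also handles the ``Note'', since when $(s,p,q)$ satisfies \eqref{negative-scaling} and the right-hand side is finite, the partial sums are Cauchy in $\dot{B}_{p,q}^s$, hence convergent in $\mathcal{S}'$ with limit in $\mathcal{S}_h'$.

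To bound each summand, H\"older gives $\|\dot{S}_{j-1}u\,\dot{\Delta}_j v\|_{L^p}\leq \|\dot{S}_{j-1}u\|_{L^{p_1}}\|\dot{\Delta}_j v\|_{L^{p_2}}$. For \eqref{bony-estimate-1} I would use \eqref{useful-inequality-1} in the form $\|\dot{S}_{j-1}u\|_{L^{p_1}}\lesssim_\varphi \|u\|_{L^{p_1}}$, factor this constant out of the $l^q$-sum in $j$, and recognise what remains as $\|v\|_{\dot{B}_{p_2,q}^s}$. For \eqref{bony-estimate-2} I would instead expand $\dot{S}_{j-1}u=\sum_{k\leq j-2}\dot{\Delta}_k u$, which converges in $L^{p_1}$ thanks to $s_1<0$. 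Setting $a_k:=2^{ks_1}\|\dot{\Delta}_k u\|_{L^{p_1}}\in l^{q_1}$ and $b_j:=2^{js_2}\|\dot{\Delta}_j v\|_{L^{p_2}}\in l^{q_2}$ and using $s=s_1+s_2$, one obtains
\[
2^{js}\|\dot{S}_{j-1}u\,\dot{\Delta}_j v\|_{L^p}\leq b_j\sum_{k\leq j-2}2^{(j-k)s_1}a_k=b_j\,(\phi\ast a)_j,\qquad \phi_m:=2^{ms_1}\mathbf{1}_{m\geq 2}.
\]
Young's inequality gives $\|\phi\ast a\|_{l^{q_1}}\leq\|\phi\|_{l^1}\|a\|_{l^{q_1}}$, where an explicit computation of the geometric series yields $\|\phi\|_{l^1}=\tfrac{2^{2s_1}}{1-2^{s_1}}\lesssim \tfrac{1}{-s_1}$ uniformly in $s_1<0$. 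A final H\"older in $j$, using $\tfrac{1}{q}=\tfrac{1}{q_1}+\tfrac{1}{q_2}$, produces the desired bound $\|b\cdot(\phi\ast a)\|_{l^q}\leq \|b\|_{l^{q_2}}\|\phi\ast a\|_{l^{q_1}}\lesssim \tfrac{1}{-s_1}\|u\|_{\dot{B}_{p_1,q_1}^{s_1}}\|v\|_{\dot{B}_{p_2,q_2}^{s_2}}$.

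The proof is a standard application of the Littlewood-Paley toolkit; there is no serious obstacle, only bookkeeping. The only slightly delicate point is the extraction of the factor $\tfrac{1}{-s_1}$ in \eqref{bony-estimate-2}, which requires the precise geometric-series bound above rather than a naive $\|\phi\|_{l^1}<\infty$ estimate, and the careful choice of H\"older/Young split $(q_1,q_2)$ that is compatible with all admissible values (including the endpoints $q_1,q_2\in\{1,\infty\}$). Tracking the $C_n^{1+|s|}$ constant is automatic from Lemma \ref{convergence-lemma}, since the annulus $\mathcal{C}'$ depends only on $n$.
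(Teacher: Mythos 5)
The paper does not actually prove this lemma itself---it is quoted directly from \cite[Theorem 2.47]{bahouri2011}---and your argument is precisely the standard proof given there: spectral localisation of $\dot S_{j-1}u\,\dot\Delta_j v$ in the dilated annulus $2^{j}\widetilde{\mathcal{C}}$ followed by Lemma \ref{convergence-lemma}, H\"older in $x$, and, for \eqref{bony-estimate-2}, a discrete Young/H\"older argument in $j$ with the geometric-series constant $\sum_{m\geq2}2^{ms_{1}}=\tfrac{2^{2s_{1}}}{1-2^{s_{1}}}\lesssim\tfrac{1}{-s_{1}}$; all of these steps check out, including the uniformity of that constant over all $s_{1}<0$. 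The one point deserving care is the identity $\dot S_{j-1}u=\sum_{k\leq j-2}\dot\Delta_{k}u$ used for \eqref{bony-estimate-2}: for general $u\in\mathcal{S}'(\mathbb{R}^{n})$ the two sides differ by $\lim_{j_{0}\to-\infty}\dot S_{j_{0}}u$ (a nonzero constant $u$ has vanishing $\dot B^{s_{1}}_{p_{1},q_{1}}$ seminorm yet $\dot S_{j-1}u=u$, so the literal inequality would fail), so this step requires $u\in\mathcal{S}_{h}'(\mathbb{R}^{n})$; this caveat is inherited from the statement as transcribed from the source and is harmless in every application made in the paper, where $u$ always lies in a genuine Besov space and hence in $\mathcal{S}_{h}'(\mathbb{R}^{n})$.
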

\begin{lemma}\label{remainder} \cite[Theorem 2.52]{bahouri2011}.
    Suppose that $s=s_{1}+s_{2}$, $p=\frac{p_{1}p_{2}}{p_{1}+p_{2}}$ and $q=\frac{q_{1}q_{2}}{q_{1}+q_{2}}$. Let $u,v\in\mathcal{S}'(\mathbb{R}^{n})$, and assume that the series $\sum_{j\in\mathbb{Z}}\sum_{|\nu|\leq1}\dot{\Delta}_{j}u\,\dot{\Delta}_{j-\nu}v$ converges in $\mathcal{S}'(\mathbb{R}^{n})$ to some $\dot{R}(u,v)\in\mathcal{S}'(\mathbb{R}^{n})$. Then
    \begin{equation}\label{bony-estimate-3}
        {\|\dot{R}(u,v)\|}_{\dot{B}_{p,q}^{s}(\mathbb{R}^{n})} \lesssim_{\varphi} \frac{C_{n}^{1+|s|}}{s}{\|u\|}_{\dot{B}_{p_{1},q_{1}}^{s_{1}}(\mathbb{R}^{n})}{\|v\|}_{\dot{B}_{p_{2},q_{2}}^{s_{2}}(\mathbb{R}^{n})} \quad \text{if } s>0,
    \end{equation}
    \begin{equation}\label{bony-estimate-4}
        {\|\dot{R}(u,v)\|}_{\dot{B}_{p,\infty}^{s}(\mathbb{R}^{n})} \lesssim_{\varphi} C_{n}^{1+|s|}{\|u\|}_{\dot{B}_{p_{1},q_{1}}^{s_{1}}(\mathbb{R}^{n})}{\|v\|}_{\dot{B}_{p_{2},q_{2}}^{s_{2}}(\mathbb{R}^{n})} \quad \text{if }q=1\text{ and } s\geq0.
    \end{equation}
    Note: If $(s,p,q)$ satisfy \eqref{negative-scaling} and the right hand side of \eqref{bony-estimate-3} is finite, or if $(s,p,\infty)$ satisfy \eqref{negative-scaling} and the right hand side of \eqref{bony-estimate-4} is finite, then the hypothesis of convergence is satisfied, and $\dot{R}(u,v)\in\mathcal{S}_{h}'(\mathbb{R}^{n})$.
\end{lemma}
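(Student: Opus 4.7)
The plan is to estimate the dyadic blocks of $\dot{R}(u,v)$ by exploiting the fact that each summand has a ball-shaped (rather than annular) Fourier support. Setting $R_{j}:=\sum_{|\nu|\leq 1}\dot{\Delta}_{j}u\,\dot{\Delta}_{j-\nu}v$, the convolution of Fourier supports gives $\supp\mathcal{F}R_{j}\subseteq B(0,C_{n}2^{j})$. Combining this with the last disjoint-support property in Lemma \ref{bahouri-prop-2.10}, there is an integer $N_{0}=N_{0}(n)$ such that $\dot{\Delta}_{j'}R_{j}=0$ whenever $j'>j+N_{0}$. Hence $\dot{\Delta}_{j'}\dot{R}(u,v)=\sum_{j\geq j'-N_{0}}\dot{\Delta}_{j'}R_{j}$, so only the high-frequency tail $j\geq j'-N_{0}$ enters the estimate.

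From here I would apply \eqref{useful-inequality-1} together with H\"older's inequality, writing $a_{j}:=2^{js_{1}}\|\dot{\Delta}_{j}u\|_{L^{p_{1}}}$ and $b_{j}:=2^{js_{2}}\|\dot{\Delta}_{j}v\|_{L^{p_{2}}}$ (so that $\|a\|_{\ell^{q_{1}}}=\|u\|_{\dot{B}_{p_{1},q_{1}}^{s_{1}}}$ and $\|b\|_{\ell^{q_{2}}}=\|v\|_{\dot{B}_{p_{2},q_{2}}^{s_{2}}}$), and changing variables $k=j-j'$ to rearrange as
\begin{equation*}
    2^{j's}\|\dot{\Delta}_{j'}\dot{R}(u,v)\|_{L^{p}} \lesssim_{\varphi} \sum_{k\geq -N_{0}}2^{-ks}\,e_{j'+k}, \qquad e_{j}:=\sum_{|\nu|\leq 1}2^{\nu s_{2}}a_{j}b_{j-\nu}.
\end{equation*}
By H\"older on $\mathbb{Z}$ with $1/r=1/q_{1}+1/q_{2}$ one has $\|e\|_{\ell^{r}}\lesssim\|a\|_{\ell^{q_{1}}}\|b\|_{\ell^{q_{2}}}$. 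For \eqref{bony-estimate-3} I would take $r=q$ and apply discrete Young's inequality with the kernel $k\mapsto 2^{-ks}\mathbf{1}_{k\geq -N_{0}}$, whose $\ell^{1}$-norm is $2^{N_{0}s}(1-2^{-s})^{-1}\lesssim_{n}C_{n}^{1+|s|}/s$ for $s>0$; this yields the $\ell^{q}$ bound. For \eqref{bony-estimate-4} one has $r=1$, so an $\ell^{\infty}$--$\ell^{1}$ pairing gives $\sup_{j'}2^{j's}\|\dot{\Delta}_{j'}\dot{R}\|_{L^{p}}\leq 2^{N_{0}s}\|e\|_{\ell^{1}}$, which avoids the $1/s$ factor and allows $s\geq 0$.

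For the convergence and $\mathcal{S}_{h}'$-membership claims in the Note, under hypothesis \eqref{negative-scaling} the dyadic pieces $\dot{\Delta}_{j'}R_{j}$ are annulus-supported at scale $2^{j'}$, and the estimates above show $\{2^{j's}\|\dot{\Delta}_{j'}\dot{R}(u,v)\|_{L^{p}}\}_{j'}\in\ell^{q}$ (or $\ell^{\infty}$ in the second case); a routine partial-summation argument, combined with a variant of Lemma \ref{convergence-lemma} for ball-supported summands (or a direct application after regrouping the $R_{j}$ into finitely many annular shifts), then yields that $\sum_{j}R_{j}$ converges in $\mathcal{S}'(\mathbb{R}^{n})$ to a limit lying in $\mathcal{S}_{h}'(\mathbb{R}^{n})$.

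The main obstacle is the tight dependence on $s$: the contrast between the $1/s$ factor in \eqref{bony-estimate-3} (which blows up as $s\to 0^{+}$) and its absence in \eqref{bony-estimate-4} at $s=0$, $q=1$. This reflects precisely whether the summability over $k$ is achieved by integrability of the decaying weight $2^{-ks}$ (yielding \eqref{bony-estimate-3}) or by the absolute summability of $e_{j}$ built into the case $q_{1}^{-1}+q_{2}^{-1}=1$ with output in $\ell^{\infty}$ (yielding \eqref{bony-estimate-4}); a careful bookkeeping of these two mechanisms is the central technical point.
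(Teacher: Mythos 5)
The paper offers no proof of this lemma: it is quoted directly from \cite[Theorem 2.52]{bahouri2011}, so there is no in-paper argument to compare against; your proof is the standard one for the cited theorem and is correct --- the ball-shaped spectral support of $R_{j}$, the resulting one-sided sum over $j\geq j'-N_{0}$, H\"older in $j$, and discrete Young's inequality with the kernel $2^{-ks}\mathbf{1}_{k\geq-N_{0}}$ (its $\ell^{1}$-norm producing the $1/s$ for $s>0$, and an $\ell^{\infty}$--$\ell^{1}$ pairing handling $s\geq0$ when $q=1$) are exactly the ingredients of that proof, and your treatment of the Note is the same low-/high-frequency splitting that underlies the Note in Lemma \ref{convergence-lemma}. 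The only caveat is cosmetic: the shifts $|\nu|\leq1$ put a factor $\max_{|\nu|\leq1}2^{\nu s_{2}}\leq2^{|s_{2}|}$ into your $e_{j}$, so the constant you actually obtain depends on $s_{2}$ and not solely on $s=s_{1}+s_{2}$ as displayed in \eqref{bony-estimate-3}--\eqref{bony-estimate-4}; this matches the form of the constant in the cited source and is harmless everywhere the lemma is used in the paper, since there $|s_{1}|$ and $|s_{2}|$ are bounded in terms of parameters on which the implied constants are allowed to depend.
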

\subsection{Weak* measurability}\label{weak*-measurability}
For a measurable space $(E,\mathcal{E})$, a function $u:E\rightarrow\mathcal{D}'(\mathbb{R}^{n})$ is said to be {\em weakly* measurable} if the map $t\mapsto\langle u(t),\phi\rangle$ is measurable for all $\phi\in\mathcal{D}(\mathbb{R}^{n})$. By density of $\mathcal{D}(\mathbb{R}^{n})$ in $\mathcal{S}(\mathbb{R}^{n})$, a function $u:E\rightarrow\mathcal{S}'(\mathbb{R}^{n})$ is weakly* measurable if and only if the map $t\mapsto\langle u(t),\phi\rangle$ is measurable for all $\phi\in\mathcal{S}(\mathbb{R}^{n})$. If $u$ is a weakly* measurable function $u:E\rightarrow\mathcal{S}'(\mathbb{R}^{n})$, then $\dot{\Delta}_{j}u$ is a weakly* measurable function $\dot{\Delta}_{j}u:E\rightarrow L_{\mathrm{loc}}^{1}(\mathbb{R}^{n})$.

For any measure space $(E,\mathcal{E},\mu)$, and any $s\in\mathbb{R}$ and $\alpha,p,q\in[1,\infty]$, if $u:E\rightarrow\mathcal{S}'(\mathbb{R}^{n})$ is a weakly* measurable function satisfying $\dot{\Delta}_{j}u(t)\in L^{p}(\mathbb{R}^{n})$ for $\mu$-almost every $t\in E$ for all $j\in\mathbb{Z}$, then we define the seminorm
\begin{equation*}
    {\|u\|}_{\widetilde{L}^{\alpha}(E,\mathcal{E},\mu;\dot{B}_{p,q}^{s}(\mathbb{R}^{n}))} := {\left\|j\mapsto2^{js}{\|\dot{\Delta}_{j}u\|}_{L^{\alpha}(E,\mathcal{E},\mu;L^{p}(\mathbb{R}^{n}))}\right\|}_{l^{q}(\mathbb{Z})},
\end{equation*}
where measurability of the map $t\mapsto{\|\dot{\Delta}_{j}u(t)\|}_{L^{p}(\mathbb{R}^{n})}$ is verified in Lemma \ref{measurable-norms} in the appendix. We define the Chemin-Lerner space $\widetilde{L}^{\alpha}(E,\mathcal{E},\mu;\dot{B}_{p,q}^{s}(\mathbb{R}^{n}))$ to be the set of equivalence classes of weakly* measurable functions $u:E\rightarrow\mathcal{S}_{h}'(\mathbb{R}^{n})$ satisfying ${\|u\|}_{\widetilde{L}^{\alpha}(E,\mathcal{E},\mu;\dot{B}_{p,q}^{s}(\mathbb{R}^{n}))}<\infty$. Many of the results concerning the spaces $\dot{B}_{p,q}^{s}(\mathbb{R}^{n})$ carry over to the spaces $\widetilde{L}^{\alpha}(E,\mathcal{E},\mu;\dot{B}_{p,q}^{s}(\mathbb{R}^{n}))$, replacing ${\|\dot{\Delta}_{j}u\|}_{L^{p}(\mathbb{R}^{n})}$ with ${\|\dot{\Delta}_{j}u\|}_{L^{\alpha}(E,\mathcal{E},\mu;L^{p}(\mathbb{R}^{n}))}$ in the proofs, and replacing convergence $f_{m}\rightarrow f$ in $\mathcal{S}'(\mathbb{R}^{n})$ with convergence $f_{m}(t)\rightarrow f(t)$ in $\mathcal{S}'(\mathbb{R}^{n})$ for (at least) $\mu$-almost every $t\in E$. We write $\widetilde{L}^{\overline{\infty}}$ when we want to replace ${\|\cdot\|}_{L^{\infty}(E,\mathcal{E},\mu)}$ with the supremum norm. We write $\widetilde{L}_{T}^{\alpha}\dot{B}_{p,q}^{s}(\mathbb{R}^{n})=\widetilde{L}^{\alpha}((0,T);\dot{B}_{p,q}^{s}(\mathbb{R}^{n}))$ when the interval $(0,T)$ is equipped with the Borel sigma-algebra and Lebesgue measure. By Minkowski's inequality we have the following embeddings, which will be used frequently in this paper:
\begin{equation}\label{chemin-lerner-minkowski}
    \widetilde{L}_{T}^{\alpha}\dot{B}_{p,q}^{s}(\mathbb{R}^{n})\hookrightarrow L_{T}^{\alpha}\dot{B}_{p,q}^{s}(\mathbb{R}^{n}) \text{ if }q\leq\alpha, \qquad L_{T}^{\alpha}\dot{B}_{p,q}^{s}(\mathbb{R}^{n})\hookrightarrow \widetilde{L}_{T}^{\alpha}\dot{B}_{p,q}^{s}(\mathbb{R}^{n}) \text{ if }q\geq\alpha.
\end{equation}
When $\alpha = \infty$, \eqref{chemin-lerner-minkowski} is similarly true with  $L_{T}^{\infty}$/$\widetilde{L}_{T}^{\infty}$ replaced by  $L_{T}^{\overline{\infty}}$/$\widetilde{L}_{T}^{\overline{\infty}}$.
\section{Estimates for the fixed point problem}\label{estimates-for-the-fixed-point-problem}
\subsection{The heat operator}
If $f\in\mathcal{S}'(\mathbb{R}^{n})$, then $S[f](t)=e^{t\Delta}f=\mathcal{F}^{-1}e^{-t{|\xi|}^{2}}\mathcal{F}f$ defines a continuous (and hence weakly* measurable) function $S[f]:(0,\infty)\rightarrow\mathcal{S}'(\mathbb{R}^{n})$. We have the semigroup property $e^{s\Delta}e^{t\Delta}=e^{(s+t)\Delta}$, and the identity
\begin{equation*}
    \langle e^{t\Delta}f,\phi\rangle = \langle f,\Phi(t)*\phi\rangle \quad \text{for all }f\in\mathcal{S}'(\mathbb{R}^{n})\text{ and }\phi\in\mathcal{S}(\mathbb{R}^{n}),
\end{equation*}
where $\Phi$ is the heat kernel. The heat map $e^{t\Delta}$ commutes with the operators $\dot{S}_{j}$ and $\dot{\Delta}_{j}$, so if $f\in\mathcal{S}_{h}'(\mathbb{R}^{n})$ then $e^{t\Delta}f\in\mathcal{S}_{h}'(\mathbb{R}^{n})$. By estimate \eqref{basic-heat-estimate} we have, for $\kappa_{j}(t)=\kappa_{j,n}(t):=e^{-c_{n}t2^{2j}}$,
\begin{equation}\label{kappa}
    {\|e^{t\Delta}f\|}_{L^{p}(\mathbb{R}^{n})} \lesssim_{n} \kappa_{j}(t){\|f\|}_{L^{p}(\mathbb{R}^{n})}, \quad \kappa_{j}(t)\leq1, \quad {\|\kappa_{j}\|}_{L_{T}^{\alpha}}\lesssim_{n}2^{-\frac{2j}{\alpha}}\wedge T^{\frac{1}{\alpha}},
\end{equation}
so we have
\begin{equation}\label{besov-heat-estimate-0}
    {\|\dot{\Delta}_{j}S[f]\|}_{L_{T}^{\alpha}L^{p}(\mathbb{R}^{n})} \lesssim_{n} (2^{-\frac{2j}{\alpha}}\wedge T^\frac{1}{\alpha}){\|\dot{\Delta}_{j}f\|}_{L^{p}(\mathbb{R}^{n})}, \quad {\|\dot{\Delta}_{j}S[f]\|}_{L_{T}^{\overline{\infty}}L^{p}(\mathbb{R}^{n})} \lesssim_{n} {\|\dot{\Delta}_{j}f\|}_{L^{p}(\mathbb{R}^{n})},
\end{equation}
from which we deduce
\begin{equation}\label{besov-heat-estimate-1}
    {\|S[f]\|}_{\widetilde{L}_{T}^{\alpha}\dot{B}_{p,q}^{s+\frac{2}{\alpha}}(\mathbb{R}^{n})} + {\|S[f]\|}_{\widetilde{L}_{T}^{\overline{\infty}}\dot{B}_{p,q}^{s}(\mathbb{R}^{n})} \lesssim_{n} {\|f\|}_{\dot{B}_{p,q}^{s}(\mathbb{R}^{n})} \quad \text{for }\alpha\in[1,\infty).
\end{equation}
On the other hand, Young's convolution inequality yields
\begin{equation}\label{besov-heat-estimate-3}
    {\|S[f](t)\|}_{L^{p}(\mathbb{R}^{n})} \leq {\|\Phi(t)\|}_{L^{1}(\mathbb{R}^{n})}{\|f\|}_{L^{p}(\mathbb{R}^{n})} = {\|f\|}_{L^{p}(\mathbb{R}^{n})}.
\end{equation}
\subsection{The bilinear operator}
For the bilinear operators $B$ and $\widetilde{B}$, we use the notation
\begin{equation*}
    {(u\otimes v)}_{ij} := u_{i}v_{j} \quad \text{(Pointwise product),}
\end{equation*}
\begin{equation*}
    {(\mathsf{Bony}(u,v))}_{ij} := \dot{T}_{u_{i}}v_{j} + \dot{T}_{v_{j}}u_{i} + \dot{R}(u_{i},v_{j}) \quad \text{(Bony product),}
\end{equation*}
\begin{equation*}
    \mathbb{P}_{ij}\nabla_{k}\phi := \mathcal{F}^{-1}\left[\xi\mapsto\mathrm{i}\frac{\xi_{i}\xi_{j}\xi_{k}}{{|\xi|}^{2}}\mathcal{F}\phi(\xi)\right] \quad \text{for }\phi\in\mathcal{S}(\mathbb{R}^{n}),
\end{equation*}
\begin{equation*}
    \langle{(\mathbb{P}\nabla\cdot W)}_{i},\phi\rangle := -\langle W_{jk},\mathbb{P}_{ij}\nabla_{k}\phi\rangle \quad \text{for }\phi\in\mathcal{S}(\mathbb{R}^{n}),
\end{equation*}
\begin{equation*}
    {(\mathbb{P}\nabla\cdot)}_{\varphi}W := \sum_{j\in\mathbb{Z}}\mathbb{P}\nabla\cdot\dot{\Delta}_{j}W,
\end{equation*}
\begin{equation*}
    \langle G[w](t),\phi\rangle := \int_{0}^{t}\langle e^{(t-s)\Delta}w(s),\phi\rangle\,\mathrm{d}s \quad \text{for }\phi\in\mathcal{S}(\mathbb{R}^{n}),
\end{equation*}
\begin{equation*}
    B[u,v] := G\mathbb{P}\nabla\cdot(u\otimes v),
\end{equation*}
\begin{equation*}
    \widetilde{B}[u,v] := G{(\mathbb{P}\nabla\cdot)}_{\varphi}\mathsf{Bony}(u,v),
\end{equation*}
and we have the following lemma.
\begin{lemma}\label{bilinear-estimates-lemma}
    \begin{enumerate}[label=(\roman*)]
        \item
        For $T\in(0,\infty)$, and $(\alpha,\ell,\epsilon)\in{[1,\infty]}^{2}\times(0,1]$ satisfying \eqref{ale-conditions}, we consider the path space
        \begin{equation*}
            X_{\ell,\epsilon}^{\alpha}(T) = X_{\ell,\epsilon}^{\alpha}(T,n) := \left\{\begin{array}{ll} L_{T}^{\alpha}L^{\ell}(\mathbb{R}^{n}) & \text{if }s_{\ell}+\epsilon+\frac{2}{\alpha}=0, \\ \widetilde{L}_{T}^{\alpha}\dot{B}_{\ell,\infty}^{s_{\ell}+\epsilon+\frac{2}{\alpha}}(\mathbb{R}^{n}) & \text{if } s_{\ell}+\epsilon+\frac{2}{\alpha}>0. \end{array}\right.
        \end{equation*}
        Let $p_{\alpha,\epsilon}=n{\left(1-\epsilon-\frac{2}{\alpha}\right)}^{-1}$ (so that $s_{p_{\alpha,\epsilon}}+\epsilon+\frac{2}{\alpha}=0$). If $u$ and $v$ are (equivalence classes of) weakly* measurable functions $(0,T)\rightarrow\mathcal{S}'(\mathbb{R}^{n})$ with finite $X_{\ell,\epsilon}^{\alpha}(T)$-seminorm, then the expression
        \begin{equation*}
            B_{\alpha,\ell,\epsilon}[u,v] := \left\{\begin{array}{ll} B[u,v] & \text{if }\ell=p_{\alpha,\epsilon}, \\ \widetilde{B}[u,v] & \text{if } \ell<p_{\alpha,\epsilon}, \end{array}\right.
        \end{equation*}
        defines a weakly* measurable function $B_{\alpha,\ell,\epsilon}[u,v]:(0,T)\rightarrow\mathcal{S}_{h}'(\mathbb{R}^{n})$. Using the notation $\tau_{t_{0}}w(t)=w(t+t_{0})$, for all $0<t_{0}<t<T$ we have the semigroup property
        \begin{equation}\label{bilinear-semigroup}
            \tau_{t_{0}}B_{\alpha,\ell,\epsilon}[u,v](t) = S[B_{\alpha,\ell,\epsilon}[u,v](t_{0})](t) + B_{\alpha,\ell,\epsilon}[\tau_{t_{0}}u,\tau_{t_{0}}v](t) \quad \text{in }\mathcal{S}'(\mathbb{R}^{n}).
        \end{equation}
        We have the estimate\footnote{As pointed out in the proof of this lemma, the intermediate quanitity in \eqref{besov-uniqueness-bilinear-estimate} can be replaced by ${\|B_{\alpha,\ell,\epsilon}[u,v]\|}_{\widetilde{L}_{T}^{\frac{\alpha}{2}}\dot{B}_{\ell,1}^{s_{\ell}+\epsilon+\frac{4}{\alpha}}(\mathbb{R}^{n})\cap\widetilde{L}_{T}^{\overline{\infty}}\dot{B}_{\ell,1}^{s_{\ell}+\epsilon}(\mathbb{R}^{n})}$. For the purposes of this paper, we only need \eqref{besov-uniqueness-bilinear-estimate} as stated here.}
        \begin{equation}\label{besov-uniqueness-bilinear-estimate}
        \begin{aligned}
            {\|B_{\alpha,\ell,\epsilon}[u,v]\|}_{X_{\ell,\epsilon}^{\alpha}(T)} &\leq {\|B_{\alpha,\ell,\epsilon}[u,v]\|}_{\widetilde{L}_{T}^{\alpha}\dot{B}_{\ell,1}^{s_{\ell}+\epsilon+\frac{2}{\alpha}}(\mathbb{R}^{n})\cap\widetilde{L}_{T}^{\overline{\infty}}\dot{B}_{\ell,1}^{s_{\ell}+\epsilon}(\mathbb{R}^{n})} \\
            &\lesssim_{\varphi,\alpha,\ell,\epsilon} T^{\frac{\epsilon}{2}}{\|u\|}_{X_{\ell,\epsilon}^{\alpha}(T)}{\|v\|}_{X_{\ell,\epsilon}^{\alpha}(T)}.
        \end{aligned}
        \end{equation}
        \item
        For $T\in(0,\infty)$, $\epsilon\in(0,1)$, $s\in(-1,\infty)$ and $p,q\in[1,\infty]$, we consider the path spaces
        \begin{equation*}
            Y_{\epsilon/2}(T) = Y_{\epsilon/2}(T,n) := \widetilde{L}_{T}^{\frac{4}{2+\epsilon}}\dot{B}_{\infty,\infty}^{\epsilon}(\mathbb{R}^{n}) \cap L_{T}^{\frac{4}{2-\epsilon}}L^{\infty}(\mathbb{R}^{n}),
        \end{equation*}
        \begin{equation*}
            Z_{p,q}^{s}(T) = Z_{p,q}^{s}(T,n) := \widetilde{L}_{T}^{1}\dot{B}_{p,q}^{s+2}(\mathbb{R}^{n})\cap\widetilde{L}_{T}^{\overline{\infty}}\dot{B}_{p,q}^{s}(\mathbb{R}^{n}),
        \end{equation*}
        which satisfy the embeddings ${\|u\|}_{X_{\infty,\epsilon/2}^{4/(2-\epsilon)}(T)}\leq{\|u\|}_{Y_{\epsilon/2}(T)}\lesssim_{\epsilon}T^{\epsilon/4}{\|u\|}_{Z_{\infty,\infty}^{-1+\epsilon}(T)}$. If $u$ and $v$ belong to the space $Z_{\infty,\infty}^{-1+\epsilon}(T)$, then $B[u,v]$ and $\widetilde{B}[u,v]$ are well-defined and coincide with each other, and satisfy the estimate
        \begin{equation}\label{besov-existence-bilinear-estimate}
            {\|B[u,v]\|}_{Z_{\infty,\infty}^{-1+\epsilon}(T)} \lesssim_{\varphi} \frac{1}{\epsilon}\left({\|u\|}_{Y_{\epsilon/2}(T)}{\|v\|}_{Y_{\epsilon/2}(T)}\wedge\frac{1}{1-\epsilon}T^{\frac{\epsilon}{2}}{\|u\|}_{Z_{\infty,\infty}^{-1+\epsilon}(T)}{\|v\|}_{Z_{\infty,\infty}^{-1+\epsilon}(T)}\right).
        \end{equation}
        Choosing $\lambda\in(0,1)$ such that $\lambda(s+1-\epsilon)<1-\epsilon$, we have the further estimate
        \begin{equation}\label{persistence-bilinear-estimate}
        \begin{aligned}
            &\quad{\|B[u,v]\|}_{Z_{p,q}^{s}(T)} + {\|B[v,u]\|}_{Z_{p,q}^{s}(T)} \\
            &\lesssim_{\varphi,s,\epsilon,\lambda} T^{\frac{\epsilon}{2}}\left({\|u\|}_{Z_{\infty,\infty}^{-1+\epsilon}(T)}{\|v\|}_{Z_{p,q}^{s}(T)}+{\|u\|}_{Z_{\infty,\infty}^{-1+\epsilon}(T)}^{\lambda}{\|u\|}_{Z_{p,q}^{s}(T)}^{1-\lambda}{\|v\|}_{Z_{\infty,\infty}^{-1+\epsilon}(T)}^{1-\lambda}{\|v\|}_{Z_{p,q}^{s}(T)}^{\lambda}\right).
        \end{aligned}
        \end{equation}
    \end{enumerate}
\end{lemma}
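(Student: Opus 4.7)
The plan is to handle both parts by the same Bony decomposition combined with frequency-localised heat smoothing. The workhorse is the estimate
\[\|\dot{\Delta}_j G\mathbb{P}\nabla\cdot \dot{\Delta}_j w\|_{L^\alpha_T L^\ell} \lesssim_n 2^{j(-1+2/\alpha)}\|\dot{\Delta}_j w\|_{L^{\alpha/2}_T L^\ell},\]
obtained from Bernstein's inequality \eqref{useful-inequality-2} for $\mathbb{P}\nabla\cdot$ (a Fourier multiplier of degree $1$), the exponential bound \eqref{basic-heat-estimate} (which yields $\|e^{-c\cdot 2^{2j}\cdot}\|_{L^{\alpha/(\alpha-1)}}\lesssim 2^{-2j(1-1/\alpha)}$ by \eqref{kappa}), and Young's convolution in time. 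The frequency-support property of Lemma \ref{bahouri-prop-2.10} restricts the cross-terms $\dot{\Delta}_j\dot{\Delta}_{j'}$ to $|j-j'|\leq 1$.

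For part (i), this reduces \eqref{besov-uniqueness-bilinear-estimate} to estimating $\mathsf{Bony}(u,v)$ (or $u\otimes v$ in the critical case $\ell=p_{\alpha,\epsilon}$) in a Chemin--Lerner Besov space. In the subcritical case $\ell<p_{\alpha,\epsilon}$ I apply the Chemin--Lerner versions of \eqref{bony-estimate-2} for $\dot{T}_u v$ and $\dot{T}_v u$ (where $s_1=s_\ell+\epsilon+2/\alpha-1<0$ precisely by the upper bound in \eqref{ale-conditions}) and of \eqref{bony-estimate-3} for $\dot{R}(u,v)$ (where $s=2(s_\ell+\epsilon+2/\alpha)>0$ by the lower bound); in the critical case H\"older's inequality on the pointwise product $u\otimes v$ suffices. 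The factor $T^{\epsilon/2}$ arises from a final H\"older-in-time step that absorbs the gap between the natural $L^{\alpha/2}_T$-integrability of $uv$ and the target exponent. Convergence of the defining series for $\mathsf{Bony}(u,v)$ in $\mathcal{S}'(\mathbb{R}^n)$, weak* measurability of $\widetilde{B}[u,v]$, and its membership in $\mathcal{S}_h'(\mathbb{R}^n)$ all follow from the same bounds via Lemmas \ref{convergence-lemma}, \ref{paraproduct} and \ref{remainder}; the semigroup identity \eqref{bilinear-semigroup} follows by a direct Fubini computation tested against $\phi\in\mathcal{S}(\mathbb{R}^n)$ and the semigroup property of $e^{t\Delta}$.

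For part (ii), the embedding $Z_{\infty,\infty}^{-1+\epsilon}(T)\hookrightarrow Y_{\epsilon/2}(T)$ follows by interpolating the two components of $Z$ in frequency via \eqref{interpolation-holder} and in time via H\"older's inequality, yielding the $\widetilde{L}^{4/(2+\epsilon)}_T\dot{B}_{\infty,\infty}^\epsilon$ component together with a $T^{\epsilon/4}$ factor; the $L^{4/(2-\epsilon)}_T L^\infty$ component comes from combining \eqref{smooth-lp} with \eqref{interpolation-geometric} applied between $\dot{B}_{\infty,\infty}^{-1+\epsilon}$ and $\dot{B}_{\infty,\infty}^{1+\epsilon}$ (contributing an $\epsilon$-dependent constant $\sim \frac{1}{1-\epsilon^2}$). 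The identity $B[u,v]=\widetilde{B}[u,v]$ for $u,v\in Z_{\infty,\infty}^{-1+\epsilon}(T)$ follows from Proposition \ref{lorentz-besov-equiv}(i) applied pointwise in time. The estimate \eqref{besov-existence-bilinear-estimate} is then proved by the same Bony scheme as in part (i): the $\frac{1}{\epsilon}$ factor traces back to \eqref{bony-estimate-3} applied to the remainder $\dot{R}(u,v)$ with $s=2\epsilon$ (equivalently, to the geometric series in the Littlewood--Paley pieces of $\dot{R}$), while in the second alternative the $\frac{T^{\epsilon/2}}{1-\epsilon}$ factor arises when one converts the $Y_{\epsilon/2}$-norms to $Z_{\infty,\infty}^{-1+\epsilon}$-norms via the embedding just described.

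For the persistence estimate \eqref{persistence-bilinear-estimate}, I apply the Bony decomposition to both $u\otimes v$ and $v\otimes u$, placing the $Z_{\infty,\infty}^{-1+\epsilon}$ factor $u$ as the low-frequency input in the paraproducts; this yields the first term on the right of \eqref{persistence-bilinear-estimate}. For $\dot{T}_v u$ and the remainder $\dot{R}(u,v)$ the regularity sum may fail to give a positive output exponent, so I first interpolate one factor between $Z_{\infty,\infty}^{-1+\epsilon}$ and $Z_{p,q}^s$ using \eqref{interpolation-holder}, and the hypothesis $\lambda(s+1-\epsilon)<1-\epsilon$ is precisely what is needed to keep the regularity of $\dot{R}$ strictly positive so that \eqref{bony-estimate-3} applies. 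The main obstacle will be the careful bookkeeping of the $\frac{1}{\epsilon(1-\epsilon)}$ constants, which appear explicitly in the main existence/blowup theorem and must be extracted cleanly from each Bony and H\"older step; a secondary issue is verifying the Chemin--Lerner analogues of \eqref{bony-estimate-2}--\eqref{bony-estimate-3}, which require routine modifications of the proofs of Lemmas \ref{paraproduct} and \ref{remainder}.
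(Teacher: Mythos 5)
Your overall architecture --- the frequency-localised smoothing estimate for $G$ via \eqref{basic-heat-estimate} and Young's inequality in time, followed by the Bony decomposition and Chemin--Lerner versions of Lemmas \ref{paraproduct}--\ref{remainder} --- is essentially the paper's. Part (i) would go through with two repairs. First, the low-frequency input of the paraproducts must be measured in a space of regularity $-1+\epsilon+\frac{2}{\alpha}<0$ (obtained from $\dot{B}_{\ell,\infty}^{s_{\ell}+\epsilon+\frac{2}{\alpha}}$ by embedding into an auxiliary $\dot{B}_{q,\infty}^{\frac{n}{q}-\frac{n}{p}}$ with $q>p$); your stated $s_{1}=s_{\ell}+\epsilon+\frac{2}{\alpha}-1=-2+\frac{n}{\ell}+\epsilon+\frac{2}{\alpha}$ is not negative when $\frac{n}{\ell}$ is large, and is not a norm you control. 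Second, the remainder with both factors in $\dot{B}_{\ell,\infty}$ lands in integrability $\ell/2$, which may be below $1$; the paper's asymmetric auxiliary exponent $q\in[\ell',\infty]$ with $\frac{1}{p}>\frac{1}{q}>\frac{2}{p}-\frac{1}{\ell}$ fixes both points simultaneously.

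The genuine gap is in part (ii). You propose to obtain the second alternative of \eqref{besov-existence-bilinear-estimate} from the first by converting $Y_{\epsilon/2}$-norms into $Z_{\infty,\infty}^{-1+\epsilon}$-norms via the embedding ${\|u\|}_{Y_{\epsilon/2}(T)}\lesssim_{\epsilon}T^{\epsilon/4}{\|u\|}_{Z_{\infty,\infty}^{-1+\epsilon}(T)}$. That embedding is applied to both $u$ and $v$, so its constant enters squared; by your own accounting it is of order $\frac{1}{1-\epsilon^{2}}$, so this route yields at best $\frac{1}{\epsilon(1-\epsilon)^{2}(1+\epsilon)^{2}}T^{\epsilon/2}$, strictly weaker than the stated $\frac{1}{\epsilon(1-\epsilon)}T^{\epsilon/2}$ as $\epsilon\rightarrow1$. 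Since this constant is exactly what propagates into the blowup rate of Theorem \ref{besov-local-existence-theorem} and is one of the advertised improvements over \cite{chemin2019a}, the loss is not cosmetic. The paper instead proves the second alternative by a separate direct Bony computation pairing ${\widetilde{L}_{T}^{\infty}\dot{B}_{\infty,\infty}^{-1+\epsilon}}$ against ${\widetilde{L}_{T}^{2/(2-\epsilon)}\dot{B}_{\infty,\infty}^{1}}$, where the single factor $\frac{1}{1-\epsilon}$ comes from \eqref{bony-estimate-2} with $s_{1}=-1+\epsilon$ and the $\frac{1}{\epsilon}$ from \eqref{bony-estimate-3} with output regularity $\epsilon$. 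Relatedly, in \eqref{persistence-bilinear-estimate} you misattribute the role of $\lambda(s+1-\epsilon)<1-\epsilon$: the remainder's output regularity is $s+1>0$ automatically (so \eqref{bony-estimate-3} applies without any interpolation), and the condition is instead what makes the interpolated low-frequency factor of $\dot{T}_{v}u$, which lives in $\dot{B}_{p/\lambda,q/\lambda}^{\lambda s+(1-\lambda)(-1+\epsilon)}$, have strictly negative regularity so that \eqref{bony-estimate-2} can be invoked.
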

\begin{remark}\label{bilinear-estimates-remark}
\begin{enumerate}[label=(\roman*)]
    \item
    In the case $\ell=p_{\alpha,\epsilon}$, the estimate \eqref{besov-uniqueness-bilinear-estimate} (without the intermediate quantity) was proved in \cite{fabes1972}. A version in Lorentz spaces is proved in \cite{davies-lorentz-blowup}.
    \item
    The estimate ${\|B[u,v]\|}_{Z_{\infty,\infty}^{-1+\epsilon}(T)}\lesssim_{\varphi,\epsilon}T^{\epsilon/2}{\|u\|}_{Z_{\infty,\infty}^{-1+\epsilon}(T)}{\|v\|}_{Z_{\infty,\infty}^{-1+\epsilon}(T)}$ was proved in \cite[Theorem 1.3]{chemin2019a}. Our estimate \eqref{besov-existence-bilinear-estimate} provides information on how the implied constant depends on $\epsilon$, and also establishes the stronger estimate ${\|B[u,v]\|}_{Z_{\infty,\infty}^{-1+\epsilon}(T)}\lesssim_{\varphi,\epsilon}{\|u\|}_{Y_{\epsilon/2}(T)}{\|v\|}_{Y_{\epsilon/2}(T)}$.
    \item
    The estimate \eqref{persistence-bilinear-estimate}, and its application in proving Theorem \ref{besov-local-existence-theorem}, are based on the ideas of \cite[Theorem 9.11]{lemarie2016}, in which the space $Z_{\infty,\infty}^{-1+\epsilon}(T)$ (naturally arising from the heat flow of an element of $\dot{B}_{\infty,\infty}^{-1+\epsilon}(\mathbb{R}^{n})$) is replaced by a space which naturally arises from the heat flow of an element of ${\mathrm{BMO}}^{-1}(\mathbb{R}^{n})$.
\end{enumerate}
\end{remark}
\begin{proof}[Proof of Lemma \ref{bilinear-estimates-lemma}]
    We start by noting some general properties of $G$. The semigroup property \eqref{bilinear-semigroup} follows from the calculation
    \begin{equation*}
    \begin{aligned}
        \langle G[w](t+t_{0}),\phi\rangle &= \int_{0}^{t+t_{0}}\langle e^{(t+t_{0}-s)\Delta}w(s),\phi\rangle\,\mathrm{d}s \\
        &= \int_{0}^{t_{0}}\langle e^{(t_{0}-s)\Delta}w(s),e^{t\Delta}\phi\rangle\,\mathrm{d}s + \int_{t_{0}}^{t+t_{0}}\langle e^{(t+t_{0}-s)\Delta}w(s),\phi\rangle\,\mathrm{d}s \\
        &= \langle G[w](t_{0}),e^{t\Delta}\phi\rangle + \int_{0}^{t}\langle e^{(t-\widetilde{s})\Delta}w(\widetilde{s}+t_{0}),\phi\rangle\,\mathrm{d}\widetilde{s} \\
        &= \langle S[G[w](t_{0})](t),\phi\rangle + \langle G[\tau_{t_{0}}w](t),\phi\rangle.
    \end{aligned}
    \end{equation*}

    To establish estimates for $G$, we define the expressions $G_{j}w(t,x) := \int_{0}^{t}[e^{(t-s)\Delta}\dot{\Delta}_{j}w(s)](x)\,\mathrm{d}s$ and $\widetilde{G}_{j,r}w(t) := \int_{0}^{t}{\|e^{(t-s)\Delta}\dot{\Delta}_{j}w(s)\|}_{L^{r}(\mathbb{R}^{n})}\,\mathrm{d}s$. By Minkowski's inequality and \eqref{kappa} we have the estimate ${\|G_{j}w(t)\|}_{L^{r}(\mathbb{R}^{n})}\leq\widetilde{G}_{j,r}w(t)\lesssim_{n}\int_{0}^{t}\kappa_{j}(t-s){\|\dot{\Delta}_{j}w(s)\|}_{L^{r}(\mathbb{R}^{n})}\,\mathrm{d}s$, so by \eqref{kappa} and Young's convolution inequality we have, for $1\leq\beta\leq\gamma\leq\infty$,
    \begin{equation}\label{Gj-estimate}
        {\|G_{j}w\|}_{L_{T}^{\gamma}L^{r}(\mathbb{R}^{n})} \leq {\|\widetilde{G}_{j,r}w\|}_{L_{T}^{\gamma}} \lesssim_{n} \left(2^{-2j\left(1-\frac{1}{\beta}+\frac{1}{\gamma}\right)}\wedge T^{1-\frac{1}{\beta}+\frac{1}{\gamma}}\right){\|\dot{\Delta}_{j}w\|}_{L_{T}^{\beta}L^{r}(\mathbb{R}^{n})},
    \end{equation}
    where we can replace $L_{T}^{\infty}$ by $L_{T}^{\overline{\infty}}$ in the case $\gamma=\infty$.

    If it is the case that the expression $\langle Gw(t),\phi\rangle:=\int_{0}^{t}\langle e^{(t-s)\Delta}w(s),\phi\rangle\,\mathrm{d}s$ defines a weakly* measurable function $Gw:(0,T)\rightarrow\mathcal{S}'(\mathbb{R}^{n})$ satisfying
    \begin{equation}\label{bilinear-G-projection}
        \langle\dot{\Delta}_{j}Gw(t),\phi\rangle=\int_{\mathbb{R}^{n}}G_{j}w(t,x)\phi(x)\,\mathrm{d}x \quad \text{for all }t\in(0,T)\text{ and }\phi\in\mathcal{S}(\mathbb{R}^{n}),
    \end{equation}
    then we can use \eqref{Gj-estimate} to estimate
    \begin{equation*}
        {\|Gw\|}_{\widetilde{L}_{T}^{\gamma}\dot{B}_{r,q}^{s+2\left(1-\frac{1}{\beta}+\frac{1}{\gamma}\right)}(\mathbb{R}^{n})} \lesssim_{n} {\|w\|}_{\widetilde{L}_{T}^{\beta}\dot{B}_{r,q}^{s}(\mathbb{R}^{n})}, \quad {\|Gw\|}_{\widetilde{L}_{T}^{\gamma}\dot{B}_{r,q}^{s}(\mathbb{R}^{n})} \lesssim_{n} T^{1-\frac{1}{\beta}+\frac{1}{\gamma}}{\|w\|}_{\widetilde{L}_{T}^{\beta}\dot{B}_{r,q}^{s}(\mathbb{R}^{n})},
    \end{equation*}
    so by \eqref{interpolation-holder} we obtain
    \begin{equation}\label{bilinear-G-estimate}
        {\|Gw\|}_{\widetilde{L}_{T}^{\gamma}\dot{B}_{r,q}^{s+2\delta}(\mathbb{R}^{n})} \lesssim_{n} T^{1-\frac{1}{\beta}+\frac{1}{\gamma}-\delta}{\|w\|}_{\widetilde{L}_{T}^{\beta}\dot{B}_{r,q}^{s}(\mathbb{R}^{n})} \quad \text{for }\beta\leq\gamma\text{ and }0\leq\delta\leq1-\frac{1}{\beta}+\frac{1}{\gamma},
    \end{equation}
    where we make the following remarks:
    \begin{enumerate}[label=(\alph*)]
        \item If $\gamma=\infty$, then we can replace $\widetilde{L}_{T}^{\infty}$ with $\widetilde{L}_{T}^{\overline{\infty}}$ on the left hand side of \eqref{bilinear-G-estimate}.
        \item If $0<\delta<1-\frac{1}{\beta}+\frac{1}{\gamma}$, then we can use \eqref{interpolation-geometric} instead of \eqref{interpolation-holder} to replace $\dot{B}_{r,q}^{s+2\delta}$ with $\dot{B}_{r,1}^{s+2\delta}$ on the left hand side of \eqref{bilinear-G-estimate}, with the implied constant now depending on $n,s,\beta,\gamma,\delta$.
    \end{enumerate}

    The following lemma (which we prove in the appendix) provides a means of justifying \eqref{bilinear-G-estimate}.
    \begin{lemma}\label{bilinear-G-lemma}
        If $w\in\widetilde{L}_{T}^{\beta}\dot{B}_{r,q}^{s}(\mathbb{R}^{n})$ for some $(s,r,q)$ satisfying \eqref{negative-scaling}, then $Gw$ does indeed define a weakly* measurable function $Gw:(0,T)\rightarrow\mathcal{S}'(\mathbb{R}^{n})$ satisfying \eqref{bilinear-G-projection}.
    \end{lemma}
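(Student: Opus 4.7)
The plan is to define $Gw(t)$ through its Littlewood--Paley pieces and then verify that this agrees with the distribution defined by the original integral formula.

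First I would fix $j\in\mathbb{Z}$ and show that $G_{j}w(t):=\int_{0}^{t}e^{(t-s)\Delta}\dot{\Delta}_{j}w(s)\,\mathrm{d}s$ is a well-defined Bochner integral in $L^{r}(\mathbb{R}^{n})$ with Fourier support in $2^{j}\mathcal{C}$. The hypothesis $w\in\widetilde{L}_{T}^{\beta}\dot{B}_{r,q}^{s}$ gives $\dot{\Delta}_{j}w\in L_{T}^{\beta}L^{r}$ for each $j$, and the pointwise heat bound \eqref{kappa} combined with H\"older's inequality yields $\|G_{j}w(t)\|_{L^{r}}\leq t^{1/\beta'}\|\dot{\Delta}_{j}w\|_{L_{t}^{\beta}L^{r}}$. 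A standard continuity-in-$t$ argument (splitting $G_{j}w(t+h)-G_{j}w(t)$ into an endpoint integral and an $(e^{h\Delta}-I)$-term) then shows that $t\mapsto\langle G_{j}w(t),\phi\rangle$ is continuous for each $\phi\in\mathcal{S}(\mathbb{R}^{n})$. Taking $\ell^{q}$-norms weighted by $2^{js}$ gives $\bigl\|j\mapsto 2^{js}\|G_{j}w(t)\|_{L^{r}}\bigr\|_{\ell^{q}(\mathbb{Z})}\lesssim t^{1/\beta'}\|w\|_{\widetilde{L}_{t}^{\beta}\dot{B}_{r,q}^{s}}<\infty$, so by Lemma \ref{convergence-lemma} (whose hypothesis is met because $(s,r,q)$ satisfies \eqref{negative-scaling}) the series $\sum_{j}G_{j}w(t)$ converges in $\mathcal{S}'(\mathbb{R}^{n})$ to some $Gw(t)\in\mathcal{S}_{h}'(\mathbb{R}^{n})$, and the Fourier-support localization forces $\dot{\Delta}_{j'}Gw(t)=G_{j'}w(t)$, which is precisely \eqref{bilinear-G-projection}. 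Weak* measurability of $t\mapsto Gw(t)$ then follows from pointwise convergence of the measurable partial sums $\sum_{|j|\leq N}\langle G_{j}w(t),\phi\rangle$.

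To finish I would verify that this $Gw$ agrees with the integral definition $\langle Gw(t),\phi\rangle=\int_{0}^{t}\langle e^{(t-s)\Delta}w(s),\phi\rangle\,\mathrm{d}s$ for $\phi\in\mathcal{S}(\mathbb{R}^{n})$. Expanding $w(s)=\sum_{j'}\dot{\Delta}_{j'}w(s)$ inside the integral (valid for a.e. $s$ since $w(s)\in\mathcal{S}_{h}'$) and swapping the sum with the integral reduces the identity to the termwise Fubini statement $\langle G_{j'}w(t),\phi\rangle=\int_{0}^{t}\langle e^{(t-s)\Delta}\dot{\Delta}_{j'}w(s),\phi\rangle\,\mathrm{d}s$, which is immediate because $G_{j'}w(t)\in L^{r}$. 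The swap itself rests on the absolute bound
\begin{equation*}
\sum_{j'}\int_{0}^{t}\bigl|\langle e^{(t-s)\Delta}\dot{\Delta}_{j'}w(s),\phi\rangle\bigr|\,\mathrm{d}s \lesssim t^{1/\beta'}\|w\|_{\widetilde{L}_{t}^{\beta}\dot{B}_{r,q}^{s}}\|\phi\|_{\dot{B}_{r',q'}^{-s}},
\end{equation*}
which I would obtain by pairing $\dot{\Delta}_{j'}w$ against a fattened projection $\widetilde{\dot{\Delta}}_{j'}\phi$ via H\"older in space, followed by H\"older in $\ell^{q}/\ell^{q'}$ after distributing the weights $2^{j's}$ and $2^{-j's}$.

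The main obstacle is securing this last bound in the full generality permitted by the hypothesis: the factor $\|\phi\|_{\dot{B}_{r',q'}^{-s}}$ must be finite and controllable by Schwartz seminorms of $\phi$, and this is the only place in the argument where the full strength of \eqref{negative-scaling} is needed, through Lemma \ref{convergence-duality}. Everything else is bookkeeping inside the Littlewood--Paley framework already established in Section \ref{besov-section}.
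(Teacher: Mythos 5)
Your argument is essentially the paper's proof run in the opposite order: the paper starts from the integral formula $\langle Gw(t),\phi\rangle=\int_{0}^{t}\langle e^{(t-s)\Delta}w(s),\phi\rangle\,\mathrm{d}s$ and decomposes it into the pieces $G_{j}w$ via two applications of Fubini, justified by exactly the absolute bound you state (pairing with fattened projections $\dot{\Delta}_{j-\nu}\phi$, $|\nu|\leq1$, then $\ell^{q}/\ell^{q'}$ H\"older and Lemma \ref{convergence-duality} under \eqref{negative-scaling}), whereas you build $\sum_{j}G_{j}w(t)$ first and then identify it with the integral formula by the same bound; the two routes are interchangeable and both are correct. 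The one point to tighten is your claim that $G_{j}w(t)$ is a ``well-defined Bochner integral in $L^{r}$'': for $r=\infty$ (which the hypothesis permits) the target is non-separable and Bochner integrability is not automatic from weak* measurability, which is why the paper instead defines $G_{j}w(t,x)$ as an iterated scalar integral and secures joint measurability of $(s,t,x)\mapsto[e^{(t-s)\Delta}\dot{\Delta}_{j}w(s)](x)=\langle\dot{\Delta}_{j}w(s),\widetilde{\Phi}(t-s,x)\rangle$ by applying Pettis' theorem (Lemma \ref{pettis}) to the heat kernel $\widetilde{\Phi}$ valued in the separable space $L^{1}(\mathbb{R}^{n})$. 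With that substitution your argument goes through verbatim.
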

    In the setting of Lemma \ref{bilinear-G-lemma}, the estimate \eqref{bilinear-G-estimate} is justified. Noting the particular case $(\gamma,\delta)=(\infty,0)$, it follows from Lemma \ref{convergence-lemma} that $Gw(t)\in\mathcal{S}_{h}'(\mathbb{R}^{n})$ for all $t\in(0,T)$.

    We now consider the two parts of Lemma \ref{bilinear-estimates-lemma} as follows.
    \begin{enumerate}[label=(\roman*)]
        \item
        To simplify the notation, let $p=p_{\alpha,\epsilon}$.

        In the case $\ell=p$, the identity $\dot{\Delta}_{j}\mathbb{P}\nabla\cdot(u\otimes v)=\mathbb{P}\nabla\cdot\dot{\Delta}_{j}(u\otimes v)$ is justified by Lemma \ref{sigma-lemma} from the appendix, so we can estimate
        \begin{equation*}\label{B-product-estimate-1}
        \begin{aligned}
            {\|\mathbb{P}\nabla\cdot(u\otimes v)\|}_{\widetilde{L}_{T}^{\frac{\alpha}{2}}\dot{B}_{p,\infty}^{-1-\frac{n}{p}}(\mathbb{R}^{n})} \lesssim_{n} {\|u\otimes v\|}_{\widetilde{L}_{T}^{\frac{\alpha}{2}}\dot{B}_{p,\infty}^{-\frac{n}{p}}(\mathbb{R}^{n})} &\lesssim_{\varphi} {\|u\otimes v\|}_{L_{T}^{\frac{\alpha}{2}}L^{\frac{p}{2}}(\mathbb{R}^{n})} \\
            &\leq {\|u\|}_{L_{T}^{\alpha}L^{p}(\mathbb{R}^{n})}{\|v\|}_{L_{T}^{\alpha}L^{p}(\mathbb{R}^{n})},
        \end{aligned}
        \end{equation*}
        where we used \eqref{useful-inequality-2} and the identity $\dot{\Delta}_{j}\mathbb{P}\nabla\cdot(u\otimes v)=\mathbb{P}\nabla\cdot\dot{\Delta}_{j}(u\otimes v)$ for the first inequality, we used the embeddings \eqref{besov-embedding}-\eqref{rough-lp} for the second inequality, and we used H\"{o}lder's inequality for the third inequality.

        In the case $\ell<p$, we have $p\in(2,\infty)$ and $\ell\in[1,p)$, so we can choose $q\in[\ell',\infty]$ satisfying $\frac{1}{p}>\frac{1}{q}>\frac{2}{p}-\frac{1}{\ell}$, and we can estimate
        \begin{equation}\label{B-product-estimate-2}
        \begin{aligned}
            &\quad {\|{(\mathbb{P}\nabla\cdot)}_{\varphi}\mathsf{Bony}(u,v)\|}_{\widetilde{L}_{T}^{\frac{\alpha}{2}}\dot{B}_{\ell,\infty}^{-1+\frac{n}{\ell}-\frac{2n}{p}}(\mathbb{R}^{n})} \\
            &\lesssim_{\varphi,\ell,p} {\|\mathsf{Bony}(u,v)\|}_{\widetilde{L}_{T}^{\frac{\alpha}{2}}\dot{B}_{\ell,\infty}^{\frac{n}{\ell}-\frac{2n}{p}}(\mathbb{R}^{n})} \\
            &\lesssim_{n} {\|\mathsf{Bony}(u,v)\|}_{\widetilde{L}_{T}^{\frac{\alpha}{2}}\dot{B}_{\frac{\ell q}{\ell+q},\infty}^{\frac{n}{\ell}+\frac{n}{q}-\frac{2n}{p}}(\mathbb{R}^{n})} \\
            &\lesssim_{\varphi,\ell,p,q} {\|u\|}_{\widetilde{L}_{T}^{\alpha}\dot{B}_{q,\infty}^{\frac{n}{q}-\frac{n}{p}}(\mathbb{R}^{n})}{\|v\|}_{\widetilde{L}_{T}^{\alpha}\dot{B}_{\ell,\infty}^{\frac{n}{\ell}-\frac{n}{p}}(\mathbb{R}^{n})} + {\|u\|}_{\widetilde{L}_{T}^{\alpha}\dot{B}_{\ell,\infty}^{\frac{n}{\ell}-\frac{n}{p}}(\mathbb{R}^{n})}{\|v\|}_{\widetilde{L}_{T}^{\alpha}\dot{B}_{q,\infty}^{\frac{n}{q}-\frac{n}{p}}(\mathbb{R}^{n})} \\
            &\lesssim_{n} {\|u\|}_{\widetilde{L}_{T}^{\alpha}\dot{B}_{\ell,\infty}^{\frac{n}{\ell}-\frac{n}{p}}(\mathbb{R}^{n})}{\|v\|}_{\widetilde{L}_{T}^{\alpha}\dot{B}_{\ell,\infty}^{\frac{n}{\ell}-\frac{n}{p}}(\mathbb{R}^{n})},
        \end{aligned}
        \end{equation}
        where we used \eqref{useful-inequality-2} and Lemma \ref{convergence-lemma} for the first inequality, we used the embedding \eqref{besov-embedding} for the second and fourth inequalities, and we used the Bony estimates \eqref{bony-estimate-1} and \eqref{bony-estimate-3} for the third inequality. Convergence and measurability issues are addressed in Lemma \ref{product-lemma} and Lemma \ref{sigma-lemma} from the appendix.

        In the case $\ell=p$, we have $-1-\frac{n}{p}=-1+\frac{n}{\ell}-\frac{2n}{p}$. In all cases ($\ell=p$ or $\ell<p$), we have $-1+\frac{n}{\ell}-\frac{2n}{p}=s_{\ell}+2\epsilon+\frac{4}{\alpha}-2$, so to derive \eqref{besov-uniqueness-bilinear-estimate} it remains for us to establish the estimate
        \begin{equation}\label{uni-bilinear-G-estimate}
            {\|Gw\|}_{\widetilde{L}_{T}^{\alpha}\dot{B}_{\ell,1}^{s_{\ell}+\epsilon+\frac{2}{\alpha}}(\mathbb{R}^{n})\cap\widetilde{L}_{T}^{\overline{\infty}}\dot{B}_{\ell,1}^{s_{\ell}+\epsilon}(\mathbb{R}^{n})} \lesssim_{n,\alpha,\ell,\epsilon} T^{\frac{\epsilon}{2}}{\|w\|}_{\widetilde{L}_{T}^{\frac{\alpha}{2}}\dot{B}_{\ell,\infty}^{s_{\ell}+2\epsilon+\frac{4}{\alpha}-2}(\mathbb{R}^{n})}.
        \end{equation}
        The estimate \eqref{uni-bilinear-G-estimate} is established\footnote{In fact we establish a stronger estimate, in which the left hand side of \eqref{uni-bilinear-G-estimate} is replaced by ${\|Gw\|}_{\widetilde{L}_{T}^{\frac{\alpha}{2}}\dot{B}_{\ell,1}^{s_{\ell}+\epsilon+\frac{4}{\alpha}}(\mathbb{R}^{n})\cap\widetilde{L}_{T}^{\overline{\infty}}\dot{B}_{\ell,1}^{s_{\ell}+\epsilon}(\mathbb{R}^{n})}$. By the interpolation \eqref{interpolation-holder} we have $\widetilde{L}_{T}^{\frac{\alpha}{2}}\dot{B}_{\ell,1}^{s_{\ell}+\epsilon+\frac{4}{\alpha}}(\mathbb{R}^{n})\cap\widetilde{L}_{T}^{\overline{\infty}}\dot{B}_{\ell,1}^{s_{\ell}+\epsilon}(\mathbb{R}^{n})\subseteq\widetilde{L}_{T}^{\alpha}\dot{B}_{\ell,1}^{s_{\ell}+\epsilon+\frac{2}{\alpha}}(\mathbb{R}^{n})$, so the footnote associated with \eqref{besov-uniqueness-bilinear-estimate} is established.} by applying Lemma \ref{bilinear-G-lemma} and the estimate \eqref{bilinear-G-estimate} (with the related remarks (a) and (b)), with $r=\ell$, $q=\infty$, $s=s_{\ell}+2\epsilon+\frac{4}{\alpha}-2$, $\beta=\frac{\alpha}{2}$, $\gamma\in[\beta,\infty]$, and $\delta=1-\frac{1}{\beta}+\frac{1}{\gamma}-\frac{\epsilon}{2}$, so that
        \begin{equation*}
            s+2\delta=s_{\ell}+\epsilon+\frac{2}{\gamma}, \quad \delta\geq1-\frac{2}{\alpha}-\frac{\epsilon}{2}>1-\frac{2}{\alpha}-\epsilon\geq0, \quad 1-\frac{1}{\beta}+\frac{1}{\gamma}-\delta=\frac{\epsilon}{2}>0;
        \end{equation*}
        the inequality $\epsilon+\frac{2}{\alpha}\leq1$ ensures that $(s,r,q)$ satisfy \eqref{negative-scaling}; so the use of Lemma \ref{bilinear-G-lemma} is justified.
        \item
        We have the embeddings
        \begin{equation*}
        \begin{aligned}
            {\|u\|}_{L_{T}^{4/(2-\epsilon)}L^{\infty}(\mathbb{R}^{n})} &\leq {\|u\|}_{\widetilde{L}_{T}^{4/(2+\epsilon)}\dot{B}_{\infty,\infty}^{\epsilon}(\mathbb{R}^{n})} + {\|u\|}_{\widetilde{L}_{T}^{4/(2-\epsilon)}\dot{B}_{\infty,1}^{0}(\mathbb{R}^{n})} \\
            &\lesssim_{\epsilon} T^{\frac{\epsilon}{4}}{\|u\|}_{\widetilde{L}_{T}^{2}\dot{B}_{\infty,\infty}^{\epsilon}(\mathbb{R}^{n})} + {\|u\|}_{\widetilde{L}_{T}^{4/(2-\epsilon)}\dot{B}_{\infty,\infty}^{\epsilon/2}(\mathbb{R}^{n})}^{(2-2\epsilon)/(2-\epsilon)}{\|u\|}_{\widetilde{L}_{T}^{4/(2-\epsilon)}\dot{B}_{\infty,\infty}^{-1+\epsilon}(\mathbb{R}^{n})}^{\epsilon/(2-\epsilon)} \\
            &\leq T^{\frac{\epsilon}{4}}{\|u\|}_{\widetilde{L}_{T}^{2}\dot{B}_{\infty,\infty}^{\epsilon}(\mathbb{R}^{n})} + {\|u\|}_{\widetilde{L}_{T}^{4/(2-\epsilon)}\dot{B}_{\infty,\infty}^{\epsilon/2}(\mathbb{R}^{n})}^{(2-2\epsilon)/(2-\epsilon)}\left(T^{\frac{2-\epsilon}{4}}{\|u\|}_{\widetilde{L}_{T}^{\infty}\dot{B}_{\infty,\infty}^{-1+\epsilon}(\mathbb{R}^{n})}\right)^{\epsilon/(2-\epsilon)} \\
            &\lesssim T^{\frac{\epsilon}{4}}{\|u\|}_{\widetilde{L}_{T}^{1}\dot{B}_{\infty,\infty}^{1+\epsilon}(\mathbb{R}^{n})\cap\widetilde{L}_{T}^{\infty}\dot{B}_{\infty,\infty}^{-1+\epsilon}(\mathbb{R}^{n})},
        \end{aligned}
        \end{equation*}
        where we used \eqref{interpolation-geometric} for the second inequality, and we used \eqref{interpolation-holder} for the fourth inequality. We deduce that the embedding ${\|u\|}_{X_{\infty,\epsilon/2}^{4/(2-\epsilon)}(T)}\leq{\|u\|}_{Y_{\epsilon/2}(T)}\lesssim_{\epsilon}T^{\epsilon/4}{\|u\|}_{Z_{\infty,\infty}^{-1+\epsilon}(T)}$ holds, and that $Z_{\infty,\infty}^{-1+\epsilon}(T) \subseteq \widetilde{L}_{T}^{2}\dot{B}_{\infty,1}^{0}(\mathbb{R}^{n})\cap\bigcup_{\alpha\in(2,\infty)}L_{T}^{\alpha}L^{\infty}(\mathbb{R}^{n})$. By the remarks preceding Lemma \ref{paraproduct} about convergence of the Bony product, and by Lemma \ref{sigma-lemma}, the fact that $u,v\in\widetilde{L}_{T}^{2}\dot{B}_{\infty,1}^{0}(\mathbb{R}^{n})$ ensures that the identities $u\otimes v=\mathsf{Bony}(u,v)$, $\mathbb{P}\nabla\cdot(u\otimes v)={(\mathbb{P}\nabla\cdot)}_{\varphi}(u\otimes v)$ and $\dot{\Delta}_{j}\mathbb{P}\nabla\cdot(u\otimes v)=\mathbb{P}\nabla\cdot\dot{\Delta}_{j}(u\otimes v)$ are satisfied (and that $B[u,v]=\widetilde{B}[u,v]$ if at least one of them is defined). By part (i) of this lemma, the fact that $u,v\in\bigcup_{\alpha\in(2,\infty)}L_{T}^{\alpha}L^{\infty}(\mathbb{R}^{n})$ ensures that $B[u,v]$ is well-defined, and that estimates of the form \eqref{bilinear-G-estimate} are justified when $w=\mathbb{P}\nabla\cdot(u\otimes v)$.

        We note the general estimate
        \begin{equation}\label{exi-bilinear-G-estimate}
        \begin{aligned}
            {\|B[u,v]\|}_{Z_{r,q}^{s}(T)} \lesssim_{n} {\|\mathbb{P}\nabla\cdot(u\otimes v)\|}_{\widetilde{L}_{T}^{1}\dot{B}_{r,q}^{s}(\mathbb{R}^{n})} &\lesssim_{n} {\|u\otimes v\|}_{\widetilde{L}_{T}^{1}\dot{B}_{r,q}^{s+1}(\mathbb{R}^{n})} \\
            &\leq T^{\frac{\epsilon}{2}}{\|u\otimes v\|}_{\widetilde{L}_{T}^{\frac{2}{2-\epsilon}}\dot{B}_{r,q}^{s+1}(\mathbb{R}^{n})},
        \end{aligned}
        \end{equation}
        where we used \eqref{useful-inequality-2} and the identity $\dot{\Delta}_{j}\mathbb{P}\nabla\cdot(u\otimes v)=\mathbb{P}\nabla\cdot\dot{\Delta}_{j}(u\otimes v)$ for the second inequality; for the first inequality, we used \eqref{bilinear-G-estimate} in the cases $(\beta,\gamma,\delta)=(1,1,1)$ and $(\beta,\gamma,\delta)=(1,\infty,0)$.

        The estimate ${\|B[u,v]\|}_{Z_{\infty,\infty}^{-1+\epsilon}(T)}\lesssim_{\varphi}\frac{1}{\epsilon}{\|u\|}_{Y_{\epsilon/2}(T)}{\|v\|}_{Y_{\epsilon/2}(T)}$ follows from the calculation
        \begin{equation*}
        \begin{aligned}
            &\quad {\|B[u,v]\|}_{Z_{\infty,\infty}^{-1+\epsilon}(T)} \lesssim_{n} {\|u\otimes v\|}_{\widetilde{L}_{T}^{1}\dot{B}_{\infty,\infty}^{\epsilon}(\mathbb{R}^{n})} \\
            &\lesssim_{\varphi} \frac{1}{\epsilon}\left({\|u\|}_{L_{T}^{\frac{4}{2-\epsilon}}L^{\infty}(\mathbb{R}^{n})}{\|v\|}_{\widetilde{L}_{T}^{\frac{4}{2+\epsilon}}\dot{B}_{\infty,\infty}^{\epsilon}(\mathbb{R}^{n})} + {\|u\|}_{\widetilde{L}_{T}^{\frac{4}{2+\epsilon}}\dot{B}_{\infty,\infty}^{\epsilon}(\mathbb{R}^{n})}{\|v\|}_{L_{T}^{\frac{4}{2-\epsilon}}L^{\infty}(\mathbb{R}^{n})}\right) \\
            &\leq \frac{1}{\epsilon}{\|u\|}_{Y_{\epsilon/2}(T)}{\|v\|}_{Y_{\epsilon/2}(T)},
        \end{aligned}
        \end{equation*}
        where we used \eqref{exi-bilinear-G-estimate} for the first inequality; for the second inequality, we used the identity $u\otimes v=\mathsf{Bony}(u,v)$, together with the Bony estimates
        \begin{equation*}
            {\|\dot{T}_{u}v\|}_{\widetilde{L}_{T}^{1}\dot{B}_{\infty,\infty}^{\epsilon}(\mathbb{R}^{n})} \lesssim_{\varphi} {\|u\|}_{L_{T}^{\frac{4}{2-\epsilon}}L^{\infty}(\mathbb{R}^{n})}{\|v\|}_{\widetilde{L}_{T}^{\frac{4}{2+\epsilon}}\dot{B}_{\infty,\infty}^{\epsilon}(\mathbb{R}^{n})},
        \end{equation*}
        \begin{equation*}
            {\|\dot{T}_{v}u\|}_{\widetilde{L}_{T}^{1}\dot{B}_{\infty,\infty}^{\epsilon}(\mathbb{R}^{n})} \lesssim_{\varphi} {\|v\|}_{L_{T}^{\frac{4}{2-\epsilon}}L^{\infty}(\mathbb{R}^{n})}{\|u\|}_{\widetilde{L}_{T}^{\frac{4}{2+\epsilon}}\dot{B}_{\infty,\infty}^{\epsilon}(\mathbb{R}^{n})},
        \end{equation*}
        \begin{equation*}
            {\|\dot{R}(u,v)\|}_{\widetilde{L}_{T}^{1}\dot{B}_{\infty,\infty}^{\epsilon}(\mathbb{R}^{n})} \lesssim_{\varphi} \frac{1}{\epsilon}{\|u\|}_{L_{T}^{\frac{4}{2-\epsilon}}\dot{B}_{\infty,\infty}^{0}(\mathbb{R}^{n})}{\|v\|}_{\widetilde{L}_{T}^{\frac{4}{2+\epsilon}}\dot{B}_{\infty,\infty}^{\epsilon}(\mathbb{R}^{n})}.
        \end{equation*}
        The estimate ${\|B[u,v]\|}_{Z_{\infty,\infty}^{-1+\epsilon}(T)}\lesssim_{\varphi}\frac{1}{\epsilon(1-\epsilon)}T^{\epsilon/2}{\|u\|}_{Z_{\infty,\infty}^{-1+\epsilon}(T)}{\|v\|}_{Z_{\infty,\infty}^{-1+\epsilon}(T)}$ follows from the calculation
        \begin{equation*}
        \begin{aligned}
            &\quad {\|B[u,v]\|}_{Z_{\infty,\infty}^{-1+\epsilon}(T)} \lesssim_{n} T^{\frac{\epsilon}{2}}{\|u\otimes v\|}_{\widetilde{L}_{T}^{\frac{2}{2-\epsilon}}\dot{B}_{\infty,\infty}^{\epsilon}(\mathbb{R}^{n})} \\
            &\lesssim_{\varphi} \frac{1}{\epsilon(1-\epsilon)}T^{\frac{\epsilon}{2}}\left({\|u\|}_{\widetilde{L}_{T}^{\infty}\dot{B}_{\infty,\infty}^{-1+\epsilon}(T)}{\|v\|}_{\widetilde{L}_{T}^{\frac{2}{2-\epsilon}}\dot{B}_{\infty,\infty}^{1}(\mathbb{R}^{n})} + {\|u\|}_{\widetilde{L}_{T}^{\frac{2}{2-\epsilon}}\dot{B}_{\infty,\infty}^{1}(\mathbb{R}^{n})}{\|v\|}_{\widetilde{L}_{T}^{\infty}\dot{B}_{\infty,\infty}^{-1+\epsilon}(T)}\right) \\
            &\leq \frac{1}{\epsilon(1-\epsilon)}T^{\frac{\epsilon}{2}}{\|u\|}_{Z_{\infty,\infty}^{-1+\epsilon}(T)}{\|v\|}_{Z_{\infty,\infty}^{-1+\epsilon}(T)},
        \end{aligned}
        \end{equation*}
        where we used \eqref{exi-bilinear-G-estimate} for the first inequality, and we used the interpolation \eqref{interpolation-holder} for the third inequality; for the second inequality, we used the identity $u\otimes v=\mathsf{Bony}(u,v)$, together with the Bony estimates
        \begin{equation*}
            {\|\dot{T}_{u}v\|}_{\widetilde{L}_{T}^{\frac{2}{2-\epsilon}}\dot{B}_{\infty,\infty}^{\epsilon}(\mathbb{R}^{n})} \lesssim_{\varphi} \frac{1}{1-\epsilon}{\|u\|}_{\widetilde{L}_{T}^{\infty}\dot{B}_{\infty,\infty}^{-1+\epsilon}(T)}{\|v\|}_{\widetilde{L}_{T}^{\frac{2}{2-\epsilon}}\dot{B}_{\infty,\infty}^{1}(\mathbb{R}^{n})},
        \end{equation*}
        \begin{equation*}
            {\|\dot{T}_{v}u\|}_{\widetilde{L}_{T}^{\frac{2}{2-\epsilon}}\dot{B}_{\infty,\infty}^{\epsilon}(\mathbb{R}^{n})} \lesssim_{\varphi} \frac{1}{1-\epsilon}{\|v\|}_{\widetilde{L}_{T}^{\infty}\dot{B}_{\infty,\infty}^{-1+\epsilon}(T)}{\|u\|}_{\widetilde{L}_{T}^{\frac{2}{2-\epsilon}}\dot{B}_{\infty,\infty}^{1}(\mathbb{R}^{n})},
        \end{equation*}
        \begin{equation*}
            {\|\dot{R}(u,v)\|}_{\widetilde{L}_{T}^{\frac{2}{2-\epsilon}}\dot{B}_{\infty,\infty}^{\epsilon}(\mathbb{R}^{n})} \lesssim_{\varphi} \frac{1}{\epsilon}{\|u\|}_{\widetilde{L}_{T}^{\infty}\dot{B}_{\infty,\infty}^{-1+\epsilon}(T)}{\|v\|}_{\widetilde{L}_{T}^{\frac{2}{2-\epsilon}}\dot{B}_{\infty,\infty}^{1}(\mathbb{R}^{n})}.
        \end{equation*}
        We have establised the estimate ${\|B[u,v]\|}_{Z_{\infty,\infty}^{-1+\epsilon}(T)}\lesssim_{\varphi}\frac{1}{\epsilon}{\|u\|}_{Y_{\epsilon/2}(T)}{\|v\|}_{Y_{\epsilon/2}(T)}$ and the estimate ${\|B[u,v]\|}_{Z_{\infty,\infty}^{-1+\epsilon}(T)}\lesssim_{\varphi}\frac{1}{\epsilon(1-\epsilon)}T^{\epsilon/2}{\|u\|}_{Z_{\infty,\infty}^{-1+\epsilon}(T)}{\|v\|}_{Z_{\infty,\infty}^{-1+\epsilon}(T)}$, so the derivation of \eqref{besov-existence-bilinear-estimate} is complete.

        The estimate for ${\|B[u,v]\|}_{Z_{p,q}^{s}(T)}$ in \eqref{persistence-bilinear-estimate} follows from the calculation
        \begin{equation*}
        \begin{aligned}
            &\quad {\|B[u,v]\|}_{Z_{p,q}^{s}(T)} \lesssim_{n} T^{\frac{\epsilon}{2}}{\|u\otimes v\|}_{\widetilde{L}_{T}^{\frac{2}{2-\epsilon}}\dot{B}_{p,q}^{s+1}(\mathbb{R}^{n})} \\
            &\lesssim_{\varphi,s,\epsilon,\lambda}  T^{\frac{\epsilon}{2}}\left({\|u\|}_{\widetilde{L}_{T}^{\infty}\dot{B}_{\infty,\infty}^{-1+\epsilon}(\mathbb{R}^{n})}{\|v\|}_{\widetilde{L}_{T}^{\frac{2}{2-\epsilon}}\dot{B}_{p,q}^{s+2-\epsilon}(\mathbb{R}^{n})}\right. \\
            &\qquad\qquad\qquad\qquad\left. + {\|u\|}_{\widetilde{L}_{T}^{\frac{2}{2-\epsilon}}\dot{B}_{\frac{p}{1-\lambda},\frac{q}{1-\lambda}}^{(1-\lambda)(s+2-\epsilon)+\lambda}(\mathbb{R}^{n})}{\|v\|}_{\widetilde{L}_{T}^{\infty}\dot{B}_{\frac{p}{\lambda},\frac{q}{\lambda}}^{\lambda s+(1-\lambda)(-1+\epsilon)}(\mathbb{R}^{n})}\right) \\
            &\leq T^{\frac{\epsilon}{2}}\left({\|u\|}_{\widetilde{L}_{T}^{\infty}\dot{B}_{\infty,\infty}^{-1+\epsilon}(\mathbb{R}^{n})}{\|v\|}_{\widetilde{L}_{T}^{\frac{2}{2-\epsilon}}\dot{B}_{p,q}^{s+2-\epsilon}(\mathbb{R}^{n})} \right. \\
            &\qquad\qquad\qquad\qquad\left. + {\|u\|}_{\widetilde{L}_{T}^{\frac{2}{2-\epsilon}}\dot{B}_{p,q}^{s+2-\epsilon}(\mathbb{R}^{n})}^{1-\lambda}{\|u\|}_{\widetilde{L}_{T}^{\frac{2}{2-\epsilon}}\dot{B}_{\infty,\infty}^{1}(\mathbb{R}^{n})}^{\lambda}{\|v\|}_{\widetilde{L}_{T}^{\infty}\dot{B}_{p,q}^{s}(\mathbb{R}^{n})}^{\lambda}{\|v\|}_{\widetilde{L}_{T}^{\infty}\dot{B}_{\infty,\infty}^{-1+\epsilon}(\mathbb{R}^{n})}^{1-\lambda}\right) \\
            &\leq T^{\frac{\epsilon}{2}}\left({\|u\|}_{Z_{\infty,\infty}^{-1+\epsilon}(T)}{\|v\|}_{Z_{p,q}^{s}(T)}+{\|u\|}_{Z_{\infty,\infty}^{-1+\epsilon}(T)}^{\lambda}{\|u\|}_{Z_{p,q}^{s}(T)}^{1-\lambda}{\|v\|}_{Z_{\infty,\infty}^{-1+\epsilon}(T)}^{1-\lambda}{\|v\|}_{Z_{p,q}^{s}(T)}^{\lambda}\right),
        \end{aligned}
        \end{equation*}
        where we used \eqref{exi-bilinear-G-estimate} for the first inequality, and we used the interpolation \eqref{interpolation-holder} for the third and fourth inequalities; for the second inequality, we used the identity $u\otimes v=\mathsf{Bony}(u,v)$, together with the Bony estimates
        \begin{equation*}
            {\|\dot{T}_{u}v\|}_{\widetilde{L}_{T}^{\frac{2}{2-\epsilon}}\dot{B}_{p,q}^{s+1}(\mathbb{R}^{n})} \lesssim_{\varphi,s,\epsilon} {\|u\|}_{\widetilde{L}_{T}^{\infty}\dot{B}_{\infty,\infty}^{-1+\epsilon}(\mathbb{R}^{n})}{\|v\|}_{\widetilde{L}_{T}^{\frac{2}{2-\epsilon}}\dot{B}_{p,q}^{s+2-\epsilon}(\mathbb{R}^{n})},
        \end{equation*}
        \begin{equation*}
            {\|\dot{R}(u,v)\|}_{\widetilde{L}_{T}^{\frac{2}{2-\epsilon}}\dot{B}_{p,q}^{s+1}(\mathbb{R}^{n})} \lesssim_{\varphi,s} {\|u\|}_{\widetilde{L}_{T}^{\infty}\dot{B}_{\infty,\infty}^{-1+\epsilon}(\mathbb{R}^{n})}{\|v\|}_{\widetilde{L}_{T}^{\frac{2}{2-\epsilon}}\dot{B}_{p,q}^{s+2-\epsilon}(\mathbb{R}^{n})},
        \end{equation*}
        \begin{equation*}
            {\|\dot{T}_{v}u\|}_{\widetilde{L}_{T}^{\frac{2}{2-\epsilon}}\dot{B}_{p,q}^{s+1}(\mathbb{R}^{n})} \lesssim_{\varphi,s,\epsilon,\lambda} {\|v\|}_{\widetilde{L}_{T}^{\infty}\dot{B}_{\frac{p}{\lambda},\frac{q}{\lambda}}^{\lambda s+(1-\lambda)(-1+\epsilon)}(\mathbb{R}^{n})}{\|u\|}_{\widetilde{L}_{T}^{\frac{2}{2-\epsilon}}\dot{B}_{\frac{p}{1-\lambda},\frac{q}{1-\lambda}}^{(1-\lambda)(s+2-\epsilon)+\lambda}(\mathbb{R}^{n})}.
        \end{equation*}
        The estimate for ${\|B[v,u]\|}_{Z_{p,q}^{s}(T)}$ in \eqref{persistence-bilinear-estimate} is derived similarly, so the derivation of \eqref{persistence-bilinear-estimate} is complete.
    \end{enumerate}
\end{proof}

\section{Uniqueness}\label{besov-uniqueness-section}
The bilinear estimate \eqref{besov-uniqueness-bilinear-estimate} enables us to prove the following.
\begin{lemma}\label{besov-uniqueness-lemma-1}
    Assume that $(\alpha,\ell,\epsilon)\in{[1,\infty]}^{2}\times(0,1]$ satisfy \eqref{ale-conditions} with $\alpha<\infty$. Let $T\in(0,\infty)$, and let $C=C_{\varphi,\alpha,\ell,\epsilon}$ be the implied constant from \eqref{besov-uniqueness-bilinear-estimate}. If $u^{0}:(0,T)\rightarrow\mathcal{S}'(\mathbb{R}^{n})$ is a weakly* measurable function satisfying $4CT^{\frac{\epsilon}{2}}{\|u^{0}\|}_{X_{\ell,\epsilon}^{\alpha}(T)}<1$, then any weakly* measurable function $u:(0,T)\rightarrow\mathcal{S}'(\mathbb{R}^{n})$, satisfying ${\|u\|}_{X_{\ell,\epsilon}^{\alpha}(T)}<\infty$ and $u(t)=u^{0}(t)-B_{\alpha,\ell,\epsilon}[u,u](t)$ in $\mathcal{S}'(\mathbb{R}^{n})$ for all $t\in(0,T)$, must satisfy $CT^{\frac{\epsilon}{2}}{\|u\|}_{X_{\ell,\epsilon}^{\alpha}(T)}\leq\Lambda$, where $\Lambda\in[0,\frac{1}{2})$ is the smaller root of the quadratic
    \begin{equation}\label{uniqueness-Lambda}
        \Lambda = CT^{\frac{\epsilon}{2}}{\|u^{0}\|}_{X_{\ell,\epsilon}^{\alpha}(T)} + \Lambda^{2}.
    \end{equation}
\end{lemma}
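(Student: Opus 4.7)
The plan is a classical bootstrap/continuity argument driven by the bilinear estimate \eqref{besov-uniqueness-bilinear-estimate}. For every $t \in (0,T]$, the fixed-point identity $u = u^{0} - B_{\alpha,\ell,\epsilon}[u,u]$ restricts to $(0,t)$ (each $B_{\alpha,\ell,\epsilon}[u,u](s)$ depends only on $u$ on $(0,s)\subseteq(0,t)$), so applying \eqref{besov-uniqueness-bilinear-estimate} on the subinterval yields
\[
\|u\|_{X_{\ell,\epsilon}^{\alpha}(t)} \;\leq\; \|u^{0}\|_{X_{\ell,\epsilon}^{\alpha}(t)} \;+\; C\,t^{\epsilon/2}\,\|u\|_{X_{\ell,\epsilon}^{\alpha}(t)}^{2}.
\]
Setting $\psi(t) := C\,t^{\epsilon/2}\|u\|_{X_{\ell,\epsilon}^{\alpha}(t)}$ and $\psi_{0}(t) := C\,t^{\epsilon/2}\|u^{0}\|_{X_{\ell,\epsilon}^{\alpha}(t)}$, this reads $\psi(t)^{2}-\psi(t)+\psi_{0}(t)\ge 0$. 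Because $\psi_{0}(t)\le\psi_{0}(T)<1/4$ by hypothesis, the roots $\Lambda_{\pm}(t) = \tfrac{1}{2}\bigl(1\pm\sqrt{1-4\psi_{0}(t)}\bigr)$ of the associated quadratic in $y$ are real and strictly separated by $1/2$, giving the dichotomy $\psi(t)\in[0,\Lambda_{-}(t)]\cup[\Lambda_{+}(t),\infty)$ at every $t\in(0,T]$. Note $\Lambda\!=\!\Lambda_{-}(T)$.

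The target $\psi(T)\le\Lambda$ amounts to excluding the upper branch at every $t\le T$. I will use three properties of $\psi$: (a) $\psi$ is nondecreasing, since each $\|\dot\Delta_{j}u\|_{L_{t}^{\alpha}L^{\ell}}$ is; (b) $\psi$ is lower semi-continuous --- when $X=L_{T}^{\alpha}L^{\ell}$ the map $t\mapsto\|u\|_{L_{t}^{\alpha}L^{\ell}}$ is in fact continuous by dominated convergence (crucially using $\alpha<\infty$, so that $\|u(\cdot)\|_{L^{\ell}}^{\alpha}\in L^{1}(0,T)$), and in the Besov case each $t\mapsto 2^{j(s_{\ell}+\epsilon+2/\alpha)}\|\dot\Delta_{j}u\|_{L_{t}^{\alpha}L^{\ell}}$ is continuous by the same reason, so their $j$-supremum is lsc; (c) $\psi(t)\to 0$ as $t\to 0^{+}$, from the bound $\psi(t)\le C\,t^{\epsilon/2}\|u\|_{X_{\ell,\epsilon}^{\alpha}(T)}$ together with $\|u\|_{X_{\ell,\epsilon}^{\alpha}(T)}<\infty$. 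Combining (a) and (b), $\psi(t_{0})=\sup_{t<t_{0}}\psi(t)=\lim_{t\nearrow t_{0}}\psi(t)$ for every $t_{0}$, i.e.\ $\psi$ is left-continuous throughout $(0,T]$. The same argument shows $\Lambda_{+}$ is upper semi-continuous and $\Lambda_{-}$ is nondecreasing (via $\psi_{0}$).

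The bootstrap then runs as follows. Suppose for contradiction that $E:=\{t\in(0,T]:\psi(t)\ge\Lambda_{+}(t)\}$ is nonempty. Since $\psi-\Lambda_{+}$ is lsc (sum of an lsc function and the lsc function $-\Lambda_{+}$), the set $E$ is closed in $(0,T]$; by (c), $\psi(t)<1/2\le\Lambda_{+}(t)$ for all small $t>0$, so $t^{*}:=\inf E>0$ and $t^{*}\in E$. For every $t<t^{*}$ the definition of $t^{*}$ and the dichotomy force $\psi(t)\le\Lambda_{-}(t)\le\Lambda_{-}(t^{*})$. Left-continuity then delivers $\psi(t^{*})=\lim_{t\nearrow t^{*}}\psi(t)\le\Lambda_{-}(t^{*})<1/2\le\Lambda_{+}(t^{*})$ --- the strict gap coming from $\psi_{0}(t^{*})\le\psi_{0}(T)<1/4$ --- contradicting $t^{*}\in E$. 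Hence $E=\varnothing$ and $\psi(t)\le\Lambda_{-}(t)$ for all $t\in(0,T]$; specialising to $t=T$ gives $CT^{\epsilon/2}\|u\|_{X_{\ell,\epsilon}^{\alpha}(T)}\le\Lambda$.

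The main obstacle is genuinely the Besov case: there $\psi$ is a supremum of continuous nondecreasing functions and can a priori jump upward, so continuity is unavailable and one must verify that lower semi-continuity together with monotonicity delivers exactly enough (left-continuity at $t^{*}$) to close the bootstrap. The hypothesis $\alpha<\infty$ is used in two essential places --- to obtain continuity of each dyadic block-norm in $t$ (ensuring lsc of the sup), and via $\psi(t)\le Ct^{\epsilon/2}\|u\|_{X_{\ell,\epsilon}^{\alpha}(T)}$ to force $\psi(0^{+})=0$ --- and the rest is linear-quadratic bookkeeping.
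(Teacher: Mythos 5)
Your setup (restricting the fixed-point identity to $(0,t)$, the dichotomy $\psi(t)\in[0,\Lambda_-(t)]\cup[\Lambda_+(t),\infty)$, the observations that $\psi$ is nondecreasing, lower semi-continuous, and vanishes as $t\to0^+$, and the deduction of left-continuity) is all correct. The gap is the claim that $E=\{t:\psi(t)\ge\Lambda_+(t)\}$ is closed because $\psi-\Lambda_+$ is lsc. For an lsc function $F$ the sets $\{F\le c\}$ are closed and $\{F>c\}$ are open, but $\{F\ge c\}$ need not be closed (take $F=1$ off a point and $F=-1$ at it). Consequently you cannot conclude $t^*=\inf E\in E$, and without that the contradiction never materialises: the scenario $\psi\le\Lambda_-$ on $(0,t^*]$ and $\psi\ge\Lambda_+$ on $(t^*,T]$ is perfectly consistent with a nondecreasing, left-continuous, lsc function vanishing at $0^+$ (a sup of continuous nondecreasing functions can jump upward to the right, e.g.\ $a_j(t)=\min(1,\max(0,j(t-t^*)))$). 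So properties (a), (b), (c) plus the dichotomy are genuinely insufficient to exclude the upper branch; some two-sided continuity must enter somewhere.

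The paper closes exactly this hole by running the dichotomy not on $g(t)=Ct^{\epsilon/2}\|u\|_{X_{\ell,\epsilon}^{\alpha}(t)}$ but on the auxiliary quantity $h(t):=Ct^{\epsilon/2}\|B_{\alpha,\ell,\epsilon}[u,u]\|_{\widetilde{L}_{t}^{\alpha}\dot{B}_{\ell,1}^{s_{\ell}+\epsilon+2/\alpha}(\mathbb{R}^{n})}$ — this is why the intermediate $\dot{B}_{\ell,1}$ quantity appears in \eqref{besov-uniqueness-bilinear-estimate}. Being an $\ell^{1}$ sum over $j$ of continuous functions of $t$ dominated by a summable bound, $h$ is genuinely continuous (not merely lsc) by dominated convergence, and satisfies $g-g_0\le h\le g^2$; the dichotomy then forces $h$ to take values in $[0,\Lambda^2]\cup[\Gamma^2,\infty)$, and continuity with $h(0^+)=0$ pins $h\le\Lambda^2$, whence $g\le g_0+\Lambda^2=\Lambda$. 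To repair your argument you would need to import this idea (or an equivalent device supplying right-continuity across the forbidden band); as written, the bootstrap does not close.
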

\begin{proof}
    Define $g_{0}:=CT^{\frac{\epsilon}{2}}{\|u^{0}\|}_{X_{\ell,\epsilon}^{\alpha}(T)}$. For $t\in(0,T]$, define $g(t):=Ct^{\frac{\epsilon}{2}}{\|u\|}_{X_{\ell,\epsilon}^{\alpha}(t)}$ and $h(t):=Ct^{\frac{\epsilon}{2}}{\|B_{\alpha,\ell,\epsilon}[u,u]\|}_{\widetilde{L}_{t}^{\alpha}\dot{B}_{\ell,1}^{s_{\ell}+\epsilon+\frac{2}{\alpha}}(\mathbb{R}^{n})}$. Then $\lim_{t\searrow0}g(t)=0$, $h$ is continuous by dominated convergence, and the bilinear estimate \eqref{besov-uniqueness-bilinear-estimate} tells us that $Ct^{\frac{\epsilon}{2}}{\|B_{\alpha,\ell,\epsilon}[u,u]\|}_{X_{\ell,\epsilon}^{\alpha}(t)}\leq h(t)\leq{g(t)}^{2}$. Let $\Gamma$ be the larger root of the quadratic $\Gamma=g_{0}+\Gamma^{2}$. Since $u=u^{0}-B_{\alpha,\ell,\epsilon}[u,u]$, we have $g(t)\leq g_{0}+h(t)\leq g_{0}+{g(t)}^{2}$, so $g$ takes values in $[0,\Lambda]\cup[\Gamma,\infty)$. If $g(t)\leq\Lambda$ then $h(t)\leq{g(t)}^{2}\leq\Lambda^{2}$, while if $g(t)\geq\Gamma$ then $h(t)\geq g(t)-g_{0}\geq\Gamma-g_{0}=\Gamma^{2}$, so $h$ takes values in $[0,\Lambda^{2}]\cup[\Gamma^{2},\infty)$. But $h$ is continuous on $(0,T]$ with $\lim_{t\searrow0}h(t)=0$, so $h(t)\leq\Lambda^{2}$ for all $t\in(0,T]$. We conclude that $g(t)\leq\Lambda$ for all $t\in(0,T]$.
\end{proof}
\begin{lemma}\label{besov-uniqueness-lemma-2}
    Assume that $(\alpha,\ell,\epsilon)\in{[1,\infty]}^{2}\times(0,1]$ satisfy \eqref{ale-conditions} with $\alpha<\infty$. Let $T\in(0,\infty)$, and let $C=C_{\varphi,\alpha,\ell,\epsilon}$ be the implied constant from \eqref{besov-uniqueness-bilinear-estimate}. If $u^{0}:(0,T)\rightarrow\mathcal{S}'(\mathbb{R}^{n})$ is a weakly* measurable function satisfying $4CT^{\frac{\epsilon}{2}}{\|u^{0}\|}_{X_{\ell,\epsilon}^{\alpha}(T)}<1$, then there exists at most one weakly* measurable function $u:(0,T)\rightarrow\mathcal{S}'(\mathbb{R}^{n})$ satisfying ${\|u\|}_{X_{\ell,\epsilon}^{\alpha}(T)}<\infty$ and $u(t)=u^{0}(t)-B_{\alpha,\ell,\epsilon}[u,u](t)$ in $\mathcal{S}'(\mathbb{R}^{n})$ for all $t\in(0,T)$.
\end{lemma}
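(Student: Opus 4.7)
The plan is to reduce the uniqueness statement to a direct application of the a priori bound from Lemma \ref{besov-uniqueness-lemma-1} together with the bilinear estimate \eqref{besov-uniqueness-bilinear-estimate}. Suppose $u_1, u_2$ are two weakly* measurable solutions with $\|u_i\|_{X_{\ell,\epsilon}^\alpha(T)} < \infty$. First I would apply Lemma \ref{besov-uniqueness-lemma-1} to each of $u_1$ and $u_2$ separately, obtaining
$$CT^{\epsilon/2}\|u_i\|_{X_{\ell,\epsilon}^\alpha(T)} \leq \Lambda \quad (i=1,2),$$
where $\Lambda$ is the smaller root from \eqref{uniqueness-Lambda}; the smallness hypothesis $4CT^{\epsilon/2}\|u^0\|_{X_{\ell,\epsilon}^\alpha(T)} < 1$ guarantees $2\Lambda < 1$.

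Next I would set $w := u_1 - u_2$ and subtract the two fixed-point equations. Bilinearity of $B_{\alpha,\ell,\epsilon}$ gives the linear relation
$$w(t) = -B_{\alpha,\ell,\epsilon}[u_1,w](t) - B_{\alpha,\ell,\epsilon}[w,u_2](t) \quad \text{in } \mathcal{S}'(\mathbb{R}^n), \quad t\in(0,T).$$
Applying \eqref{besov-uniqueness-bilinear-estimate} to each term and using the bound above yields
$$\|w\|_{X_{\ell,\epsilon}^\alpha(T)} \leq CT^{\epsilon/2}\bigl(\|u_1\|_{X(T)} + \|u_2\|_{X(T)}\bigr)\|w\|_{X(T)} \leq 2\Lambda\,\|w\|_{X(T)}.$$
Since $\|w\|_{X(T)} \leq \|u_1\|_{X(T)} + \|u_2\|_{X(T)} < \infty$ by subadditivity, and $2\Lambda < 1$, this forces $\|w\|_{X_{\ell,\epsilon}^\alpha(T)} = 0$.

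Finally I would upgrade the vanishing of this seminorm to pointwise equality in $\mathcal{S}'(\mathbb{R}^n)$ for every $t \in (0,T)$. In the $L^\alpha L^\ell$ case this directly gives $w(t)=0$ in $\mathcal{S}'(\mathbb{R}^n)$ for a.e.\ $t$; in the Chemin--Lerner case it gives $\dot{\Delta}_j w = 0$ in $L^\alpha_T L^\ell$ for each $j$, whence, using membership in $\mathcal{S}_h'$ together with Lemma \ref{littlewood-paley-decomposition}, again $w(t) = 0$ in $\mathcal{S}'(\mathbb{R}^n)$ for a.e.\ $t$. Because $B_{\alpha,\ell,\epsilon}$ is defined through a Duhamel-type time integral and therefore depends only on the a.e.\ equivalence class of its inputs, one has $B_{\alpha,\ell,\epsilon}[u_1,u_1](t) = B_{\alpha,\ell,\epsilon}[u_2,u_2](t)$ in $\mathcal{S}'(\mathbb{R}^n)$ for every $t \in (0,T)$, and the two fixed-point equations then yield $u_1(t) = u_2(t)$ in $\mathcal{S}'(\mathbb{R}^n)$ at every $t$.

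The main obstacle is mild: \eqref{besov-uniqueness-bilinear-estimate} is a global-in-time estimate rather than a time-localized one, so a naive Gronwall-style bootstrap is unavailable, and one must arrange a single contractive inequality on $(0,T)$. This is exactly what Lemma \ref{besov-uniqueness-lemma-1} was engineered to supply; once it is invoked, the remaining argument is a one-line linear contraction, and the only conceptual care required is the passage from a.e.\ equality in time to pointwise equality in $\mathcal{S}'(\mathbb{R}^n)$ via the a.e.-invariance of $B_{\alpha,\ell,\epsilon}$.
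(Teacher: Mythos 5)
Your proposal is correct and follows essentially the same route as the paper: invoke Lemma \ref{besov-uniqueness-lemma-1} to bound both solutions by $\Lambda/(CT^{\epsilon/2})$, write $u_1-u_2=-B_{\alpha,\ell,\epsilon}[u_1,u_1-u_2]-B_{\alpha,\ell,\epsilon}[u_1-u_2,u_2]$, contract via \eqref{besov-uniqueness-bilinear-estimate} to get $\|u_1-u_2\|_{X_{\ell,\epsilon}^{\alpha}(T)}\leq 2\Lambda\|u_1-u_2\|_{X_{\ell,\epsilon}^{\alpha}(T)}$ with $2\Lambda<1$, and then upgrade a.e.\ equality to equality for every $t$ using the fact that $B_{\alpha,\ell,\epsilon}$ depends only on the a.e.\ equivalence class of its arguments. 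The only cosmetic difference is that the paper states this last upgrading step as a consequence of \eqref{besov-uniqueness-bilinear-estimate} rather than of the Duhamel structure, but the content is identical.
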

\begin{proof}
    Assume that $u,v$ are two such solutions. Then $u-v=-B_{\alpha,\ell,\epsilon}[u,u-v]-B_{\alpha,\ell,\epsilon}[u-v,v]$, so by \eqref{besov-uniqueness-bilinear-estimate} and Lemma \ref{besov-uniqueness-lemma-1} we have ${\|u-v\|}_{X_{\ell,\epsilon}^{\alpha}(T)}\leq2\Lambda{\|u-v\|}_{X_{\ell,\epsilon}^{\alpha}(T)}$, where $\Lambda\in[0,\frac{1}{2})$ is the smaller root of the quadratic \eqref{uniqueness-Lambda}. Therefore ${\|u-v\|}_{X_{\ell,\epsilon}^{\alpha}(T)}=0$, so $u(t)=v(t)$ in $\mathcal{S}'(\mathbb{R}^{n})$ for almost every $t\in(0,T)$. By \eqref{besov-uniqueness-bilinear-estimate}, it follows that $B_{\alpha,\ell,\epsilon}[u,u](t)=B_{\alpha,\ell,\epsilon}[v,v](t)$ in $\mathcal{S}'(\mathbb{R}^{n})$ for all $t\in(0,T)$. Since we assumed that $u(t)+B_{\alpha,\ell,\epsilon}[u,u](t)=v(t)+B_{\alpha,\ell,\epsilon}[v,v](t)$ in $\mathcal{S}'(\mathbb{R}^{n})$ for all $t\in(0,T)$, we conclude that $u(t)=v(t)$ in $\mathcal{S}'(\mathbb{R}^{n})$ for all $t\in(0,T)$.
\end{proof}
We can now give the
\begin{proof}[Proof of Proposition \ref{besov-uniqueness-theorem}]
    We may suppose without loss of generality that $\alpha<\infty$ (a small decrease in $\alpha$ can be counteracted by a small decrease in $\epsilon$, such that $\epsilon+\frac{2}{\alpha}$ remains the same). Assume that $u,v$ are two such solutions. By \eqref{besov-uniqueness-bilinear-estimate}, we have ${\|S[f]\|}_{X_{\ell,\epsilon}^{\alpha}(T')}<\infty$ for all $T'\in(0,T)$.

    Let $C=C_{\varphi,\alpha,\ell,\epsilon}$ be the implied constant from \eqref{besov-uniqueness-bilinear-estimate}. By Lemma \ref{besov-uniqueness-lemma-2}, if $4C{(T')}^{\frac{\epsilon}{2}}{\|S[f]\|}_{X_{\ell,\epsilon}^{\alpha}(T')}<1$ then we have uniqueness on the time interval $(0,T')$. This inequality holds for sufficiently small $T'$, so the quantity $T_{0}:=\inf\left\{T'\in(0,T)\text{ : }u(t)\neq v(t)\right\}$ is strictly positive.

    Suppose for the sake of contradiction that $T_{0}<T$. Then we can choose $\eta\in(0,T_{0})$ sufficiently small such that
    \begin{equation}\label{besov-uniqueness-blowup}
        4C{(2\eta)}^{\frac{\epsilon}{2}}\left({\|\tau_{T_{0}-\eta}u\|}_{X_{\ell,\epsilon}^{\alpha}(2\eta)}+C{(2\eta)}^{\frac{\epsilon}{2}}{\|\tau_{T_{0}-\eta}u\|}_{X_{\ell,\epsilon}^{\alpha}(2\eta)}^{2}\right) < 1,
    \end{equation}
    where we use the notation $\tau_{t_{0}}u(t)=u(t+t_{0})$. By the semigroup properties for the operators $S$ and $B$, we have $S[u(T_{0}-\eta)]=\tau_{T_{0}-\eta}u+B_{\alpha,\ell,\epsilon}[\tau_{T_{0}-\eta}u,\tau_{T_{0}-\eta}u]=\tau_{T_{0}-\eta}v+B_{\alpha,\ell,\epsilon}[\tau_{T_{0}-\eta}v,\tau_{T_{0}-\eta}v]$. By \eqref{besov-uniqueness-bilinear-estimate} and \eqref{besov-uniqueness-blowup}, we then have $4C{(2\eta)}^{\frac{\epsilon}{2}}{\|S[u(T_{0}-\eta)]\|}_{X_{\ell,\epsilon}^{\alpha}(2\eta)}<1$. By Lemma \ref{besov-uniqueness-lemma-2}, it follows that $u=v$ on the time interval $(T_{0}-\eta,T_{0}+\eta)$, so $u=v$ on the time interval $(0,T_{0}+\eta)$, which contradicts the definition of $T_{0}$.
\end{proof}
\section{Local existence and blowup rates}\label{besov-existence-section}
The bilinear estimates \eqref{besov-existence-bilinear-estimate}-\eqref{persistence-bilinear-estimate} and the embedding ${\|u\|}_{Y_{\epsilon/2}(T)}\lesssim_{\epsilon}T^{\epsilon/4}{\|u\|}_{Z_{\infty,\infty}^{-1+\epsilon}(T)}$ enable us to give the
\begin{proof}[Proof of Theorem \ref{besov-local-existence-theorem}]
    We start by proving part (i). Let $A_{\varphi}$ be the implied constant from \eqref{besov-existence-bilinear-estimate}, and let $\widetilde{A}_{\epsilon}$ be the implied constant from the embedding ${\|u\|}_{Y_{\epsilon/2}(T)}\lesssim_{\epsilon}T^{\epsilon/4}{\|u\|}_{Z_{\infty,\infty}^{-1+\epsilon}(T)}$. Defining the norms
    \begin{equation*}
        {\|u\|}_{V_{\epsilon}^{0}(T)} := \frac{A_{\varphi}}{\epsilon(1-\epsilon)}T^{\frac{\epsilon}{2}}{\|u\|}_{Z_{\infty,\infty}^{-1+\epsilon}(T)}, \quad {\|u\|}_{V_{\epsilon}^{1}(T)} := \frac{A_{\varphi}\widetilde{A}_{\epsilon}}{\epsilon}T^{\frac{\epsilon}{4}}{\|u\|}_{Y_{\epsilon/2}(T)},
    \end{equation*}
    the estimate \eqref{besov-existence-bilinear-estimate} and the embedding ${\|u\|}_{Y_{\epsilon/2}(T)}\leq\widetilde{A}_{\epsilon}T^{\epsilon/4}{\|u\|}_{Z_{\infty,\infty}^{-1+\epsilon}(T)}$ tell us that
    \begin{equation}\label{exi-proof-bilinear-estimate}
        {\|B[u,v]\|}_{V_{\epsilon}^{j}(T)} \leq {\|u\|}_{V_{\epsilon}^{j}(T)}{\|v\|}_{V_{\epsilon}^{j}(T)}, \quad {\|B[u,v]\|}_{Z_{\infty,\infty}^{-1+\epsilon}(T)} \lesssim_{\varphi,\epsilon} T^{-\epsilon/2}{\|u\|}_{V_{\epsilon}^{j}(T)}{\|v\|}_{V_{\epsilon}^{j}(T)},
    \end{equation}
    whenever $j\in\{0,1\}$ and $u,v\in Z_{\infty,\infty}^{-1+\epsilon}(T)$.

    By the heat estimates \eqref{besov-heat-estimate-1} and \eqref{besov-heat-estimate-3}, and the embedding ${\|f\|}_{\dot{B}_{\infty,\infty}^{0}(\mathbb{R}^{n})}\lesssim_{\varphi}{\|f\|}_{L^{\infty}(\mathbb{R}^{n})}$, for all $f\in\dot{B}_{\infty,\infty}^{-1+\epsilon}(\mathbb{R}^{n})$ we have
    \begin{equation*}
        T^{\frac{\epsilon}{2}}{\|S[f]\|}_{Z_{\infty,\infty}^{-1+\epsilon}(T)} \lesssim_{n} T^{\frac{\epsilon}{2}}{\|f\|}_{\dot{B}_{\infty,\infty}^{-1+\epsilon}(\mathbb{R}^{n})},
    \end{equation*}
    \begin{equation*}
        T^{\frac{\epsilon}{4}}{\|S[f]\|}_{Y_{\epsilon/2}(T)} \leq T^{\frac{1}{2}}{\|S[f]\|}_{\widetilde{L}_{T}^{\frac{2}{\epsilon}}\dot{B}_{\infty,\infty}^{\epsilon}(\mathbb{R}^{n})\cap L_{T}^{\infty}L^{\infty}(\mathbb{R}^{n})} \lesssim_{\varphi} T^{\frac{1}{2}}{\|f\|}_{L^{\infty}(\mathbb{R}^{n})},
    \end{equation*}
    so we can choose constants $C_{\varphi}$ and $\widetilde{C}_{\varphi,\epsilon}$ for which we have the implications
    \begin{equation*}
        \frac{C_{\varphi}}{\epsilon(1-\epsilon)}T^{\frac{\epsilon}{2}}{\|f\|}_{\dot{B}_{\infty,\infty}^{-1+\epsilon}(\mathbb{R}^{n})} < 1 \quad \Rightarrow \quad \frac{4A_{\varphi}}{\epsilon(1-\epsilon)}T^{\frac{\epsilon}{2}}{\|S[f]\|}_{Z_{\infty,\infty}^{-1+\epsilon}(T)} < 1,
    \end{equation*}
    \begin{equation*}
        \widetilde{C}_{\varphi,\epsilon}T^{\frac{1}{2}}{\|f\|}_{L^{\infty}(\mathbb{R}^{n})} < 1 \quad \Rightarrow \quad \frac{4A_{\varphi}\widetilde{A}_{\epsilon}}{\epsilon}T^{\frac{\epsilon}{4}}{\|S[f]\|}_{Y_{\epsilon/2}(T)} < 1.
    \end{equation*}
    We therefore have the following: if $f\in\dot{B}_{\infty,\infty}^{-1+\epsilon}(\mathbb{R}^{n})$ satisfies \eqref{besov-local-existence-condition}, then $S[f]\in Z_{\infty,\infty}^{-1+\epsilon}(T)$ satisfies $\min_{j\in\{0,1\}}{\|S[f]\|}_{V_{\epsilon}^{j}(T)}<1/4$.

    Using the fact that $S[f]\in Z_{\infty,\infty}^{-1+\epsilon}(T)$ satisfies $\min_{j\in\{0,1\}}{\|S[f]\|}_{V_{\epsilon}^{j}(T)}<1/4$, we will construct a solution using Picard iteration. Fix a value $j\in\{0,1\}$ such that $4{\|S[f]\|}_{V_{\epsilon}^{j}(T)}<1$, and consider the Picard scheme given by $u^{0}=S[f]$, and $u^{m+1}=S[f]-B[u^{m},u^{m}]$ for $m\geq0$. By the bilinear estimate \eqref{exi-proof-bilinear-estimate}, and by induction on $m$, we have that $u^{m}\in Z_{\infty,\infty}^{-1+\epsilon}(T)$ satisfies ${\|u^{m}\|}_{V_{\epsilon}^{j}(T)}\leq\Lambda$, where $\Lambda\in[0,\frac{1}{2})$ is the smaller root of the quadratic $\Lambda={\|S[f]\|}_{V_{\epsilon}^{j}(T)}+\Lambda^{2}$. By writing
    \begin{equation*}
        u^{m+1}-u^{m} = -B[u^{m},u^{m}-u^{m-1}] - B[u^{m}-u^{m-1},u^{m-1}],
    \end{equation*}
    the bilinear estimate \eqref{exi-proof-bilinear-estimate} then tells us that we have ${\|u^{m+1}-u^{m}\|}_{V_{\epsilon}^{j}(T)}\leq2\Lambda{\|u^{m}-u^{m-1}\|}_{V_{\epsilon}^{j}(T)}$ and ${\|u^{m+1}-u^{m}\|}_{Z_{\infty,\infty}^{-1+\epsilon}(T)} \lesssim_{\varphi,\epsilon} T^{-\epsilon/2}\Lambda{\|u^{m}-u^{m-1}\|}_{V_{\epsilon}^{j}(T)}$ for $m\geq1$, from which we deduce that $\sum_{m=1}^{\infty}{\|u^{m+1}-u^{m}\|}_{Z_{\infty,\infty}^{-1+\epsilon}(T)}\lesssim_{\varphi,\epsilon}T^{-\epsilon/2}\sum_{m=0}^{\infty}{\|u^{m+1}-u^{m}\|}_{V_{\epsilon}^{j}(T)}<\infty$. By Lemma \ref{Banach-space}, it follows that $u^{m}$ converges in $Z_{\infty,\infty}^{-1+\epsilon}(T)$ to some $u$ satisfying $u(t)=S[f](t)-B[u,u](t)$ in $\mathcal{S}'(\mathbb{R}^{n})$ for all $t\in(0,T)$.

    To complete the proof of part (i), we will adapt the ideas of \cite[Theorem 9.11]{lemarie2016} to show that if $f\in\dot{B}_{p,q}^{s}(\mathbb{R}^{n})\cap\dot{B}_{\infty,\infty}^{-1+\epsilon}(\mathbb{R}^{n})$ satisfies \eqref{besov-local-existence-condition}, then the Picard scheme described above satisfies $u^{0}\in Z_{p,q}^{s}(T)$ and $\sum_{m=0}^{\infty}{\|u^{m+1}-u^{m}\|}_{Z_{p,q}^{s}(T)}<\infty$, which implies (by Lemma \ref{Banach-space}) that the limit $u=\lim_{m\rightarrow\infty}u^{m}$ belongs to $Z_{p,q}^{s}(T)$. By \eqref{besov-heat-estimate-1} and \eqref{persistence-bilinear-estimate}, we have $u^{m}\in Z_{p,q}^{s}(T)$ for all $m\geq0$. Let $u^{-1}=0$, and define (for $m\geq0$) the quantities $\alpha_{m}={\|u^{m}-u^{m-1}\|}_{Z_{p,q}^{s}(T)}$, $N_{m}=\sum_{k=0}^{m}\alpha_{k}$, $\beta_{m}=\sup_{k\leq m}T^{\epsilon/2}{\|u^{k}\|}_{Z_{\infty,\infty}^{-1+\epsilon}(T)}$, and $\gamma_{m}=T^{\epsilon/2}{\|u^{m}-u^{m-1}\|}_{Z_{\infty,\infty}^{-1+\epsilon}(T)}$. Choose $\lambda\in(0,1)$ such that $\lambda(s+1-\epsilon)<1-\epsilon$. For all $m\geq0$ we have
    \begin{equation*}
        u^{m+1}-u^{m} = -B[u^{m},u^{m}-u^{m-1}] - B[u^{m}-u^{m-1},u^{m-1}],
    \end{equation*}
    so by \eqref{persistence-bilinear-estimate}, and the estimate $\sup_{k\leq m}{\|u^{k}\|}_{Z_{p,q}^{s}(T)}\leq N_{m}$, we have
    \begin{equation*}
        \alpha_{m+1} \lesssim_{\varphi,s,\epsilon,\lambda} \gamma_{m}N_{m}+\gamma_{m}^{\lambda}\alpha_{m}^{1-\lambda}\beta_{m}^{1-\lambda}N_{m}^{\lambda},
    \end{equation*}
    so by Young's product inequality we have
    \begin{equation*}
        \alpha_{m+1} \leq \frac{1}{2}\alpha_{m} + K\gamma_{m}N_{m}\left(1+\beta_{\infty}^{\frac{1}{\lambda}-1}\right)
    \end{equation*}
    for some constant $K=K_{\varphi,s,\epsilon,\lambda}>0$. For $m\geq1$ we therefore have
    \begin{equation*}
    \begin{aligned}
        N_{m+1}-\frac{1}{2}N_{m} &= \alpha_{m+1}-\frac{1}{2}\alpha_{m}+N_{m}-\frac{1}{2}N_{m-1} \\
        &\leq K\gamma_{m}N_{m}\left(1+\beta_{\infty}^{\frac{1}{\lambda}-1}\right) + N_{m}-\frac{1}{2}N_{m-1} \\
        &= \left(1+K\gamma_{m}\left(1+\beta_{\infty}^{\frac{1}{\lambda}-1}\right)\right)\left(N_{m}-\frac{1}{2}N_{m-1}\right)+\frac{1}{2}K\gamma_{m}N_{m-1}\left(1+\beta_{\infty}^{\frac{1}{\lambda}-1}\right) \\
        &\leq \left(1+2K\gamma_{m}\left(1+\beta_{\infty}^{\frac{1}{\lambda}-1}\right)\right)\left(N_{m}-\frac{1}{2}N_{m-1}\right),
    \end{aligned}
    \end{equation*}
    so
    \begin{equation*}
    \begin{aligned}
        \frac{1}{2}N_{m} &\leq N_{m+1} - \frac{1}{2}N_{m} \\
        &\leq \left(N_{1}-\frac{1}{2}N_{0}\right)\prod_{k=1}^{m}\left(1+2K\gamma_{m}\left(1+\beta_{\infty}^{\frac{1}{\lambda}-1}\right)\right) \\
        &\leq \left(N_{1}-\frac{1}{2}N_{0}\right)\exp\left(\sum_{k=1}^{m}2K\gamma_{m}\left(1+\beta_{\infty}^{\frac{1}{\lambda}-1}\right)\right),
    \end{aligned}
    \end{equation*}
    so
    \begin{equation*}
        \frac{1}{2}N_{\infty} \leq \left(N_{1}-\frac{1}{2}N_{0}\right)\exp\left(\sum_{k=1}^{\infty}2K\gamma_{m}\left(1+\beta_{\infty}^{\frac{1}{\lambda}-1}\right)\right) < \infty,
    \end{equation*}
    which completes the proof of part (i).

    We now turn our attention to part (ii). Let us use the notation $\rho_{\varphi,\epsilon}(T,f)$ to denote the left hand side of \eqref{besov-local-existence-condition}. Suppose for the sake of contradiction that $T_{f}^{*}<\infty$, and that there exists $t_{0}\in(0,T_{f}^{*})$ satisfying $\rho_{\varphi,\epsilon}(T_{f}^{*}-t_{0},u(t_{0}))<1$. By continuity, we can choose $T\in(T_{f}^{*},\infty)$ such that $\rho_{\varphi,\epsilon}(T-t_{0},u(t_{0}))<1$. Then there exists $w\in Z_{p,q}^{s}(T-t_{0})\cap Z_{\infty,\infty}^{-1+\epsilon}(T-t_{0})$ satisfying $w=S[u(t_{0})]-B[w,w]$. By the semigroup properties for $S$ and $B$ we have $\tau_{t_{0}}u=S[u(t_{0})]-B[\tau_{t_{0}}u,\tau_{t_{0}}u]$. By uniqueness, therefore $w(t)=u(t+t_{0})$ for all $t\in(t_{0},T_{f}^{*})$. Again by the semigroup properties for $S$ and $B$, we see that
    \begin{equation*}
        v(t) = \left\{\begin{array}{ll} u(t) & \text{if }t\in(0,T_{f}^{*}), \\ w(t-t_{0}) & \text{if }t\in(t_{0},T), \end{array}\right.
    \end{equation*}
    defines $v\in Z_{p,q}^{s}(T)\cap Z_{\infty,\infty}^{-1+\epsilon}(T)$ satisfying $v=S[f]-B[v,v]$, which contradicts the definiiton of $T_{f}^{*}$, and completes the proof of part (ii). Therefore the proof of Theorem \ref{besov-local-existence-theorem} is complete.
\end{proof}
We now give the
\begin{proof}[Proof of Corollary \ref{further-uni}]
    By Remark \ref{further-uni-remark}(i), we don't need to worry about the distinction between the bilinear operators $B$ and $\widetilde{B}$. Let $u\in\cap_{T'\in(0,T_{f}^{*})}Z_{\ell,\infty}^{s_{\ell}+\eta}(T')$ be the solution with maximal existence time $T^*_f>0$ from Theorem \ref{besov-local-existence-theorem}, and note that for $T'\in(0,\infty)$ and $\beta\in[1,\infty)$ we have
    \begin{equation}\label{Z-embedding}
    \begin{aligned}
        Z_{\ell,\infty}^{s_{\ell}+\eta}(T') &\subseteq \widetilde{L}_{T'}^{\beta}\dot{B}_{\ell,\infty}^{s_{\ell}+\eta+\frac{2}{\beta}}(\mathbb{R}^{n})\cap\widetilde{L}_{T'}^{\infty}\dot{B}_{\ell,\infty}^{s_{\ell}+\eta}(\mathbb{R}^{n}) \\
        &\subseteq \widetilde{L}_{T'}^{\beta}\dot{B}_{\ell,\infty}^{s_{\ell}+\eta+\frac{2}{\beta}}(\mathbb{R}^{n})\cap\widetilde{L}_{T'}^{\beta}\dot{B}_{\ell,\infty}^{s_{\ell}+\eta}(\mathbb{R}^{n}) \\
        &\subseteq \left(\cap_{\zeta\in[\eta-\frac{2}{\beta},\eta]}\widetilde{L}_{T'}^{\beta}\dot{B}_{\ell,\infty}^{s_{\ell}+\zeta+\frac{2}{\beta}}(\mathbb{R}^{n})\right)\cap\left(\cap_{\zeta\in(\eta-\frac{2}{\beta},\eta)}\widetilde{L}_{T'}^{\beta}\dot{B}_{\ell,1}^{s_{\ell}+\zeta+\frac{2}{\beta}}(\mathbb{R}^{n})\right).
    \end{aligned}
    \end{equation}
    We then prove (i) and (ii) as follows.
    \begin{enumerate}[label=(\roman*)]
        \item
        If $-\left(s_{\ell}+\frac{2}{\alpha}\right)<\epsilon\leq\eta$, then the embedding \eqref{Z-embedding} (with $\beta=\alpha$ and $\zeta=\epsilon$) tells us that\footnote{The inequality $\epsilon\leq\eta\leq\epsilon+\frac{2}{\alpha}$ ensures that we can apply \eqref{Z-embedding} to obtain $Z_{\ell,\infty}^{s_{\ell}+\eta}(T')\subseteq\widetilde{L}_{T'}^{\alpha}\dot{B}_{\ell,\infty}^{s_{\ell}+\epsilon+\frac{2}{\alpha}}(\mathbb{R}^{n})$. The inequality $s_{\ell}+\epsilon+\frac{2}{\alpha}>0$ ensures that $\widetilde{L}_{T'}^{\alpha}\dot{B}_{\ell,\infty}^{s_{\ell}+\epsilon+\frac{2}{\alpha}}(\mathbb{R}^{n})=X_{\ell,\epsilon}^{\alpha}(T')$.} $Z_{\ell,\infty}^{s_{\ell}+\eta}(T')\subseteq\widetilde{L}_{T'}^{\alpha}\dot{B}_{\ell,\infty}^{s_{\ell}+\epsilon+\frac{2}{\alpha}}(\mathbb{R}^{n})=X_{\ell,\epsilon}^{\alpha}(T')$. If $\epsilon<\eta<\epsilon+\frac{2}{\alpha}$, then the embedding \eqref{Z-embedding} (with $\beta=\alpha$ and $\zeta=\epsilon$) tells us that $Z_{\ell,\infty}^{s_{\ell}+\eta}(T')\subseteq\widetilde{L}_{T'}^{\alpha}\dot{B}_{\ell,1}^{s_{\ell}+\epsilon+\frac{2}{\alpha}}(\mathbb{R}^{n})\subseteq X_{\ell,\epsilon}^{\alpha}(T')$. In either case, it follows that the solution $u$ belongs to $\cap_{T'\in(0,T_{f}^{*})}X_{\ell,\epsilon}^{\alpha}(T')$, so that $\widetilde{T}\geq T^*_f >0$.
        \item
        Choose $\zeta\in(0,\eta)\cap(0,\epsilon]$, and define $\beta:=2{\left(\epsilon+\frac{2}{\alpha}-\zeta\right)}^{-1}$, so that $\epsilon+\frac{2}{\alpha}=\zeta+\frac{2}{\beta}$ and $X_{\ell,\epsilon}^{\alpha}(T')\subseteq X_{\ell,\zeta}^{\beta}(T')$ for all $T'\in(0,\infty)$. If $\eta=\epsilon+\frac{2}{\alpha}$, then the assumption $\eta<s_{\ell}+2\left(\epsilon+\frac{2}{\alpha}\right)$ implies that $s_{\ell}+\zeta+\frac{2}{\beta}>0$, so we have the embedding $Z_{\ell,\infty}^{s_{\ell}+\eta}(T')\subseteq\widetilde{L}_{T'}^{\infty}\dot{B}_{\ell,\infty}^{s_{\ell}+\zeta+\frac{2}{\beta}}(\mathbb{R}^{n})\subseteq\widetilde{L}_{T'}^{\beta}\dot{B}_{\ell,\infty}^{s_{\ell}+\zeta+\frac{2}{\beta}}(\mathbb{R}^{n})=X_{\ell,\zeta}^{\beta}(T')$. If $\eta<\epsilon+\frac{2}{\alpha}$, then the embedding \eqref{Z-embedding} tells us that $Z_{\ell,\infty}^{s_{\ell}+\eta}(T')\subseteq\widetilde{L}_{T'}^{\beta}\dot{B}_{\ell,1}^{s_{\ell}+\zeta+\frac{2}{\beta}}(\mathbb{R}^{n})\subseteq X_{\ell,\zeta}^{\beta}(T')$. In either case, it follows that the solution $u$ belongs to $\cap_{T'\in(0,T_{f}^{*})}X_{\ell,\zeta}^{\beta}(T')$, and that $u(t_{0})\in\dot{B}_{\ell,q}^{s_{\ell}+\epsilon+\frac{2}{\alpha}}(\mathbb{R}^{n})$ for almost every $t_{0}\in(0,T_{f}^{*})$, where we define
        \begin{equation*}
            q := \left\{\begin{array}{ll}\infty & \text{if }\eta=\epsilon+\frac{2}{\alpha}, \\ 1 & \text{if }\eta<\epsilon+\frac{2}{\alpha}. \end{array}\right.
        \end{equation*}
        For every $t_{1}\in(0,T_{f}^{*})$, we can choose $t_{0}\in(0,t_{1})$ such that $u(t_{0})\in\dot{B}_{\ell,q}^{s_{\ell}+\epsilon+\frac{2}{\alpha}}(\mathbb{R}^{n})$. Now $\tau_{t_{0}}u\in\cap_{T'\in(0,T_{f}^{*}-t_{0})}Z_{\infty,\infty}^{-1+\eta}(T')$ satisfies the equation $\tau_{t_{0}}u=S[u(t_{0})]-B[\tau_{t_{0}}u,\tau_{t_{0}}u]$ with (recall \eqref{chemin-lerner-minkowski}) $u(t_{0})\in\dot{B}_{\ell,q}^{s_{\ell}+\epsilon+\frac{2}{\alpha}}(\mathbb{R}^{n})\cap\dot{B}_{\infty,\infty}^{-1+\eta}(\mathbb{R}^{n})$, so by propagation of regularity (Remark \ref{besov-existence-remark}(ii)) we have that $\tau_{t_{0}}u\in\cap_{T'\in(0,T_{f}^{*}-t_{0})}Z_{\ell,q}^{s_{\ell}+\epsilon+\frac{2}{\alpha}}(T')$. It follows that $\tau_{t_{1}}u\in\cap_{T'\in(0,T_{f}^{*}-t_{1})}\widetilde{L}_{T'}^{\infty}\dot{B}_{\ell,q}^{s_{\ell}+\epsilon+\frac{2}{\alpha}}(\mathbb{R}^{n})\subseteq\cap_{T'\in(0,T_{f}^{*}-t_{1})}X_{\ell,\epsilon}^{\alpha}(T')$ for every $t_{1}\in(0,T_{f}^{*})$.

        Now suppose that $\widetilde{T}>0$, and let $v$ be the solution belonging to the space $\cap_{T'\in(0,\widetilde{T})}X_{\ell,\epsilon}^{\alpha}(T')\subseteq\cap_{T'\in(0,\widetilde{T})}X_{\ell,\zeta}^{\beta}(T')$. By \eqref{besov-heat-estimate-1} we have $S[f]\in L_{\infty}^{\infty}\dot{B}_{\ell,\infty}^{s_{\ell}+\eta}(\mathbb{R}^{n})$, while by \eqref{besov-uniqueness-bilinear-estimate}
        \begin{equation*}
            B[v,v] \in L_{T'}^{\beta}\dot{B}_{\ell,1}^{s_{\ell}+\zeta+\frac{2}{\beta}}(\mathbb{R}^{n})\cap L_{T'}^{\infty}\dot{B}_{\ell,1}^{s_{\ell}+\zeta}(\mathbb{R}^{n}) \subseteq L_{T'}^{\frac{2}{\eta-\zeta}}\dot{B}_{\ell,1}^{s_{\ell}+\eta}(\mathbb{R}^{n})
        \end{equation*}
        for all $T'\in(0,\widetilde{T})$. We deduce that $v\in\cap_{T'\in(0,\widetilde{T})}L_{T'}^{2/\eta}\dot{B}_{\ell,\infty}^{s_{\ell}+\eta}(\mathbb{R}^{n})$.

        By uniqueness in $X_{\ell,\zeta}^{\beta}$, the solutions $u$ and $v$ coincide on their common interval of existence. Since $v\in\cap_{T'\in(0,\widetilde{T})}L_{T'}^{2/\eta}\dot{B}_{\ell,\infty}^{s_{\ell}+\eta}(\mathbb{R}^{n})$, the blowup estimate ${\|u(t_{0})\|}_{\dot{B}_{\infty,\infty}^{-1+\eta}(\mathbb{R}^{n})}\gtrsim_{\varphi,\epsilon}{(T_{f}^{*}-t_{0})}^{-\eta/2}$ rules out the possibility that $T_{f}^{*}<\widetilde{T}$. Since $v\in\cap_{T'\in(0,\widetilde{T})}X_{\ell,\epsilon}^{\alpha}(T')$ and $\tau_{t_{0}}u\in\cap_{T'\in(0,T_{f}^{*}-t_{0})}X_{\ell,\epsilon}^{\alpha}(T')$, we have $u\in\cap_{T'\in(0,T_{f}^{*})}X_{\ell,\epsilon}^{\alpha}(T')$, which rules out the possibility that $T_{f}^{*}>\widetilde{T}$. Therefore $T_{f}^{*}=\widetilde{T}$, and the solutions $u$ and $v$ coincide.
    \end{enumerate}
\end{proof}

\ \\\\\\

\appendix
\section{Appendix: Making sense of the fixed point problem}
\subsection{Weak* measurability}
We prove the following useful lemmas. These lemmas are proved in \cite[Lemmas 2.1-2.3]{davies-notions-of-solution} in the more general context of Lorentz spaces.
\begin{lemma}\label{measurability-duality}
    Let $(E,\mathcal{E})$ be a measurable space. For $p\in[1,\infty]$, a function $u:E\rightarrow L^{p}(\mathbb{R}^{n})$ is weakly* measurable if and only if the map $t\rightarrow\langle u(t),\phi\rangle$ is measurable for all $\phi\in L^{p'}(\mathbb{R}^{n})$.
\end{lemma}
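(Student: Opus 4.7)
The plan is to prove the two implications separately. The reverse implication ($\Leftarrow$) is immediate: since $\mathcal{D}(\mathbb{R}^{n}) \subseteq L^{p'}(\mathbb{R}^{n})$ for every $p' \in [1,\infty]$, measurability of $t \mapsto \langle u(t),\phi\rangle$ for all $\phi \in L^{p'}(\mathbb{R}^{n})$ immediately restricts to measurability for $\phi \in \mathcal{D}(\mathbb{R}^{n})$, which by definition is weak* measurability. So the content of the lemma is the forward implication: assuming $t \mapsto \langle u(t),\phi\rangle$ is measurable for all $\phi \in \mathcal{D}(\mathbb{R}^{n})$, extend this measurability to all $\phi \in L^{p'}(\mathbb{R}^{n})$.

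For the forward implication in the case $p \in (1,\infty]$ (so $p' \in [1,\infty)$), I would use the density of $\mathcal{D}(\mathbb{R}^{n})$ in $L^{p'}(\mathbb{R}^{n})$. Given $\phi \in L^{p'}(\mathbb{R}^{n})$, choose $\phi_{k} \in \mathcal{D}(\mathbb{R}^{n})$ with $\phi_{k} \to \phi$ in $L^{p'}(\mathbb{R}^{n})$. H\"older's inequality yields
\begin{equation*}
    |\langle u(t), \phi_{k}\rangle - \langle u(t), \phi\rangle| \leq \|u(t)\|_{L^{p}(\mathbb{R}^{n})}\|\phi_{k}-\phi\|_{L^{p'}(\mathbb{R}^{n})} \xrightarrow{k \to \infty} 0
\end{equation*}
for each fixed $t \in E$, so $t \mapsto \langle u(t), \phi\rangle$ is the pointwise limit of the measurable functions $t \mapsto \langle u(t), \phi_{k}\rangle$, and hence is itself measurable.

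The main obstacle is the endpoint case $p=1$ (so $p' = \infty$), where $\mathcal{D}(\mathbb{R}^{n})$ is not dense in $L^{\infty}(\mathbb{R}^{n})$, so the argument above breaks down. My plan is to replace norm approximation with a uniformly bounded pointwise approximation. Given $\phi \in L^{\infty}(\mathbb{R}^{n})$, I would set $\phi_{k} := (\phi \cdot \mathbf{1}_{B(0,k)}) * \psi_{1/k}$ where $\psi_{\varepsilon}$ is a standard mollifier; then $\phi_{k} \in \mathcal{D}(\mathbb{R}^{n})$, $\|\phi_{k}\|_{L^{\infty}(\mathbb{R}^{n})} \leq \|\phi\|_{L^{\infty}(\mathbb{R}^{n})}$, and $\phi_{k}(x) \to \phi(x)$ at every Lebesgue point of $\phi$, hence almost everywhere. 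For each fixed $t \in E$, since $u(t) \in L^{1}(\mathbb{R}^{n})$, the functions $u(t)\phi_{k}$ are dominated by $\|\phi\|_{L^{\infty}(\mathbb{R}^{n})}|u(t)| \in L^{1}(\mathbb{R}^{n})$, so dominated convergence gives $\langle u(t), \phi_{k}\rangle \to \langle u(t), \phi\rangle$. Again the limit is measurable as a pointwise limit of measurable functions, finishing the proof.
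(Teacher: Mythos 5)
Your proposal is correct, and the overall strategy coincides with the paper's: both proofs reduce measurability of $t\mapsto\langle u(t),\phi\rangle$ for $\phi\in L^{p'}(\mathbb{R}^{n})$ to the case $\phi\in\mathcal{D}(\mathbb{R}^{n})$ by exhibiting an approximating sequence of test functions and passing to the limit (the paper's explicit sequence $\rho_{R}\cdot(\eta_{\epsilon}*\phi)$ plays the role of your $\phi_{k}$, and for $p>1$ the two arguments are essentially identical). The only genuine divergence is at the endpoint $p=1$: the paper transfers the mollification onto $u(t)$ via Fubini, writing $\langle u(t),\eta_{\epsilon}*\phi\rangle=\langle\eta_{\epsilon}*u(t),\phi\rangle$, and then uses norm convergence $\eta_{\epsilon}*u(t)\rightarrow u(t)$ in $L^{1}(\mathbb{R}^{n})$ paired against the fixed $\phi\in L^{\infty}(\mathbb{R}^{n})$; you instead keep the approximation on the $\phi$ side, using the uniform bound ${\|\phi_{k}\|}_{L^{\infty}(\mathbb{R}^{n})}\leq{\|\phi\|}_{L^{\infty}(\mathbb{R}^{n})}$ together with almost-everywhere convergence at Lebesgue points and dominated convergence. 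Both devices are valid; yours has the small aesthetic advantage of treating all values of $p$ by a single template (approximate $\phi$, pass to the limit), while the paper's Fubini trick avoids any appeal to the Lebesgue differentiation theorem.
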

\begin{proof}
    Let $u:E\rightarrow L^{p}(\mathbb{R}^{n})$ be a weakly* measurable function, and let $\phi\in L^{p'}(\mathbb{R}^{n})$. Define the cutoff function $\rho_{R}(x)=\rho(x/R)$ for $R\in\mathbb{Q}_{>0}$, where $\rho\in C_{c}^{\infty}(\mathbb{R}^{n})$ satisfies $\rho(x)=1$ for $|x|<1$ and $\rho(x)=0$ for $|x|>2$. Define also the approximate identity $\eta_{\epsilon}(x)=\epsilon^{-n}\eta(x/\epsilon)$ for $\epsilon\in\mathbb{Q}_{>0}$, where $\eta\in C_{c}^{\infty}(\mathbb{R}^{n})$ satisfies $\int_{\mathbb{R}^{n}}\eta(x)\,\mathrm{d}x=1$. Then the function
    \begin{equation*}
        \langle u(t),\phi\rangle = \lim_{\epsilon\rightarrow0}\langle u(t),\eta_{\epsilon}*\phi\rangle = \lim_{\epsilon\rightarrow0}\lim_{R\rightarrow\infty}\langle u(t),\rho_{R}\cdot(\eta_{\epsilon}*\phi)\rangle
    \end{equation*}
    is measurable, where the limit $R\rightarrow\infty$ follows from dominated convergence, and the limit $\epsilon\rightarrow0$ follows from approximation of identity in $L^{q}(\mathbb{R}^{n})$ for $q<\infty$. (In the case $p=1$, use Fubini's theorem to write $\langle u(t),\eta_{\epsilon}*\phi\rangle=\langle \eta_{\epsilon}*u(t),\phi\rangle$ before taking the limit $\epsilon\rightarrow0$.).
\end{proof}
\begin{lemma}\label{measurable-norms}
    Let $(E,\mathcal{E})$ be a measurable space. If $p\in[1,\infty]$, and $u:E\rightarrow L^{p}(\mathbb{R}^{n})$ is a weakly* measurable function, then the function of $t$ given by $t\mapsto{\|u(t)\|}_{L^{p}(\mathbb{R}^{n})}=\sup_{{\|\phi\|}_{L^{p'}(\mathbb{R}^{n})}\leq1}\langle u(t),\phi\rangle$ is measurable.
\end{lemma}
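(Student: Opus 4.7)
The strategy is to express $\|u(t)\|_{L^p(\mathbb{R}^n)}$ as a countable supremum of measurable functions of $t$; each individual function in the supremum will be measurable by Lemma \ref{measurability-duality}, and a countable supremum of measurable functions is measurable. Concretely, for each $p \in [1,\infty]$ I will produce a countable set $D \subseteq L^{p'}(\mathbb{R}^n)$ such that
\begin{equation*}
    \|u(t)\|_{L^p(\mathbb{R}^n)} = \sup_{\phi \in D} \langle u(t), \phi \rangle \qquad \text{for all } t \in E.
\end{equation*}

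When $p \in (1,\infty]$, so that $p' \in [1,\infty)$, the space $L^{p'}(\mathbb{R}^n)$ is separable in its norm, so one can take $D$ to be any countable norm-dense subset of the closed unit ball of $L^{p'}(\mathbb{R}^n)$ that is symmetric under $\phi \mapsto -\phi$. The duality $L^p(\mathbb{R}^n) = (L^{p'}(\mathbb{R}^n))^*$ (when $p' < \infty$), or the very definition of the $L^\infty$-norm as the operator norm against $L^1$ (when $p' = 1$), then yields the displayed identity by continuity of the functional $\phi \mapsto \langle u(t),\phi\rangle$ on $L^{p'}(\mathbb{R}^n)$.

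The main obstacle is the endpoint $p=1$, where $p'=\infty$ and $L^\infty(\mathbb{R}^n)$ is not separable, so the dense-subset argument fails outright. I would instead take $D$ to be the countable collection of simple functions of the form $\phi = \sum_{i=1}^N c_i \mathbf{1}_{Q_i}$, where $N \in \mathbb{N}$, the $Q_i$ are finitely many pairwise disjoint cubes with rational vertices, and $c_i \in \mathbb{Q} \cap [-1,1]$ (so that $\|\phi\|_\infty \leq 1$ automatically). The inequality $\|u(t)\|_{L^1(\mathbb{R}^n)} \geq \sup_{\phi \in D} \langle u(t),\phi\rangle$ is trivial. For the reverse inequality, fix $t \in E$ and $\epsilon > 0$: use tightness ($\int_{|x|>R} |u(t)| < \epsilon$ for $R \in \mathbb{Q}_{>0}$ large enough) and absolute continuity of $\int |u(t)|$ to choose $\delta > 0$ so that $\int_A |u(t)| < \epsilon$ whenever $|A| < \delta$; then approximate the sign function $\mathrm{sgn}(u(t)) \mathbf{1}_{B_R}$ by an element $\phi \in D$ that agrees with it on $B_R$ outside a set of Lebesgue measure less than $\delta$, which is standard since such simple functions are dense in measurable functions in measure on bounded sets. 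This delivers $\langle u(t),\phi\rangle \geq \|u(t)\|_{L^1(\mathbb{R}^n)} - O(\epsilon)$, completing the supremum representation. With $D$ in hand, Lemma \ref{measurability-duality} applied to each $\phi \in D$, together with countability of $D$, yields measurability of $t \mapsto \|u(t)\|_{L^p(\mathbb{R}^n)}$ in all cases.
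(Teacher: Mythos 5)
Your proposal is correct, and for $p\in(1,\infty]$ it coincides with the paper's proof: both reduce to taking a countable norm-dense subset of the closed unit ball of the separable space $L^{p'}(\mathbb{R}^{n})$ and invoking Lemma \ref{measurability-duality} term by term. The two arguments part ways only at the endpoint $p=1$. The paper handles it abstractly: it builds a countable family $A$ in the unit ball $B$ of $L^{\infty}(\mathbb{R}^{n})$ that is weak* sequentially dense in $B$, by fixing a dense sequence ${(\phi_{m})}_{m\geq1}$ in $L^{1}(\mathbb{R}^{n})$ and, for each $m$, choosing countably many $F_{k,m}\in B$ whose moment vectors ${(\langle F_{k,m},\phi_{l}\rangle)}_{l\leq m}$ are dense in the corresponding subset of $\mathbb{R}^{m}$; this is in effect the statement that the unit ball of the dual of a separable Banach space is weak* sequentially separable. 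You instead exploit the concrete structure of $L^{1}(\mathbb{R}^{n})$: a countable family of rational simple functions on rational cubes, together with tightness, absolute continuity of the integral, and approximation of $\mathrm{sgn}(u(t))\mathbf{1}_{B_{R}}$ in measure, directly realises $\|u(t)\|_{L^{1}}$ as a countable supremum. Both routes are sound. Yours is more elementary and self-contained, but is tied to Lebesgue measure on $\mathbb{R}^{n}$ and to the specific duality $L^{1}$--$L^{\infty}$; the paper's version transfers verbatim to other nonseparable dual spaces (the authors cite the Lorentz-space analogue in a companion paper, where weak-$L^{p}$ balls play the role of $B$), which is presumably why they chose it. One small point worth making explicit in your write-up: when you approximate $\mathrm{sgn}(u(t))\mathbf{1}_{B_{R}}$ by $\phi\in D$, the cubes may protrude outside $B_{R}$, so you should also bound $\bigl|\int_{|x|>R}u(t)\phi\bigr|\leq\int_{|x|>R}|u(t)|<\epsilon$ using the tightness you already set up; with that, the estimate $\langle u(t),\phi\rangle\geq\|u(t)\|_{L^{1}(\mathbb{R}^{n})}-O(\epsilon)$ closes as you claim.
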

\begin{proof}
    In the case $p>1$, we choose a countable dense subset $A\subseteq\{\phi\in L^{p'}(\mathbb{R}^{n})\text{ : }{\|\phi\|}_{L^{p'}(\mathbb{R}^{n})}\leq1\}$, so the function ${\|u(t)\|}_{L^{p}(\mathbb{R}^{n})}=\sup_{\phi\in A}\langle u(t),\phi\rangle$ is measurable.

    In the case $p=1$, we will construct a countable family $A\subseteq B:=\{F\in L^{\infty}(\mathbb{R}^{n})\text{ : }{\|F\|}_{L^{\infty}(\mathbb{R}^{n})}\leq1\}$ such that for each $F\in B$ there exists a sequence ${(F_{m})}_{m\geq1}$ in $A$ satisfying $\langle F_{m},\phi\rangle\overset{m\rightarrow\infty}{\rightarrow}\langle F,\phi\rangle$ for all $\phi\in L^{1}(\mathbb{R}^{n})$. If such a family $A$ exists, then the function ${\|u(t)\|}_{L^{1}(\mathbb{R}^{n})}=\sup_{F\in A}\langle F,u(t)\rangle$. We construct $A$ as follows. Let ${(\phi_{m})}_{m\geq1}$ be dense in $L^{1}(\mathbb{R}^{n})$, and observe (by separability of $\mathbb{R}^{m}$) that each family $\left\{{(\langle F,\phi_{l}\rangle)}_{l\leq m}\text{ : }F\in B\right\}\subseteq\mathbb{R}^{m}$ has a dense subset $\left\{{(\langle F_{k,m},\phi_{l}\rangle)}_{l\leq m}\text{ : }k\geq1\right\}$. We claim that $A={(F_{k,m})}_{k,m\geq1}$ is as required. Given $F\in B$, for each $m\geq1$ there exists $k(m)$ such that $\left|\langle F_{k(m),m}-F,\phi_{l}\rangle\right|<\frac{1}{m}$ for all $l\leq m$, so for any $\phi\in L^{1}(\mathbb{R}^{n})$ and $\epsilon>0$ we can choose $l(\epsilon)$ with ${\|\phi_{l(\epsilon)}-\phi\|}_{L^{1}(\mathbb{R}^{n})}<\epsilon$ to obtain
    \begin{equation*}
        \limsup_{m\rightarrow\infty}\left|\langle F_{k(m),m}-F,\phi\rangle\right|\leq\limsup_{m\rightarrow\infty}\left(\left|\langle F_{k(m),m}-F,\phi_{l(\epsilon)}\rangle\right|+\left|\langle F_{k(m),m}-F,\phi_{l(\epsilon)}-\phi\rangle\right|\right) \leq 2\epsilon.
    \end{equation*}
\end{proof}
\begin{lemma}\label{pettis}
    (Pettis' theorem). Let $(E,\mathcal{E})$ be a measurable space. If $p\in[1,\infty)$, and $u:E\rightarrow L^{p}(\mathbb{R}^{n})$ is a weakly* measurable function, then there exist measurable functions $u_{m}:E\rightarrow L^{p}(\mathbb{R}^{n})$ with finite image, satisfying ${\|u_{m}(t)\|}_{L^{p}(\mathbb{R}^{n})}\leq{\|u(t)\|}_{L^{p}(\mathbb{R}^{n})}$ and ${\|u_{m}(t)-u(t)\|}_{L^{p}(\mathbb{R}^{n})}\overset{m\rightarrow\infty}{\rightarrow}0$ for all $t\in E$.
\end{lemma}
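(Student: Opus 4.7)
My plan is to prove this by constructing the $u_m$ in two steps: first approximate $u(t)$ by a simple function taking values in a countable dense subset of $L^p(\mathbb{R}^n)$, then rescale the approximant by a carefully discretized factor to enforce the required norm bound while preserving finite image.

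First, I would exploit separability of $L^p(\mathbb{R}^n)$ for $p\in[1,\infty)$ to fix a countable dense subset $\{f_k\}_{k\geq 0}$ with $f_0=0$. By Lemma \ref{measurability-duality}, weak* measurability of $u:E\to L^p(\mathbb{R}^n)$ is equivalent to measurability of $t\mapsto\langle u(t),\phi\rangle$ for every $\phi\in L^{p'}(\mathbb{R}^n)$; in particular, $u-f$ is weakly* measurable for every fixed $f\in L^p(\mathbb{R}^n)$, so by Lemma \ref{measurable-norms} the function $t\mapsto{\|u(t)-f\|}_{L^p(\mathbb{R}^n)}$ is measurable. For each $m\geq 1$, define $k(t,m)\in\{0,1,\ldots,m\}$ to be the smallest index minimizing ${\|u(t)-f_k\|}_{L^p(\mathbb{R}^n)}$ over $k\leq m$, and set $v_m(t):=f_{k(t,m)}$. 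Since the level sets $\{t:k(t,m)=j\}$ are cut out by finitely many inequalities among measurable functions, $v_m$ is measurable with finite image $\{f_0,\ldots,f_m\}$, and density of $\{f_k\}$ yields ${\|v_m(t)-u(t)\|}_{L^p(\mathbb{R}^n)}\to 0$ for every $t\in E$.

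Second, to enforce the bound ${\|u_m(t)\|}_{L^p}\leq {\|u(t)\|}_{L^p}$ without sacrificing finite image, I would introduce the discretized scaling factor
\[
\lambda_m(t):=\begin{cases} 0 & \text{if }v_m(t)=0, \\ \min\!\left(1,\,m^{-1}\lfloor m{\|u(t)\|}_{L^p}{\|v_m(t)\|}_{L^p}^{-1}\rfloor\right) & \text{otherwise},\end{cases}
\]
and set $u_m(t):=\lambda_m(t)v_m(t)$. Then $\lambda_m$ is measurable (composed from ${\|u(t)\|}_{L^p}$, ${\|v_m(t)\|}_{L^p}$, and the measurable set $\{v_m=0\}$), takes values in $\{0,1/m,2/m,\ldots,1\}$, and is constructed precisely so that $\lambda_m(t){\|v_m(t)\|}_{L^p}\leq {\|u(t)\|}_{L^p}$. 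Consequently $u_m$ is measurable with finite image contained in $\{(j/m)f_k:0\leq j,k\leq m\}$ and satisfies the required norm bound.

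It remains to check pointwise convergence $u_m(t)\to u(t)$ in $L^p(\mathbb{R}^n)$. If ${\|u(t)\|}_{L^p}\geq{\|v_m(t)\|}_{L^p}$, then $\lambda_m(t)=1$ and $u_m=v_m$, so convergence is inherited from the first step. Otherwise, the definition of $\lambda_m$ gives $(1-\lambda_m(t)){\|v_m(t)\|}_{L^p}\leq {\|v_m(t)\|}_{L^p}-{\|u(t)\|}_{L^p}+m^{-1}{\|v_m(t)\|}_{L^p}\leq {\|v_m(t)-u(t)\|}_{L^p}+m^{-1}{\|v_m(t)\|}_{L^p}$, whence ${\|u_m(t)-u(t)\|}_{L^p}\leq 2{\|v_m(t)-u(t)\|}_{L^p}+m^{-1}{\|v_m(t)\|}_{L^p}\to 0$; the case $v_m(t)=0$ for infinitely many $m$ forces $u(t)=0$, for which $u_m(t)=0$ trivially along that subsequence. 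The main obstacle is reconciling the norm constraint with the finite-image requirement---a direct minimization over a finite subset of $\{f_k\}$ fails because the nearest dense element to $u(t)$ may have larger norm, and the remedy is precisely the discrete scaling $\lambda_m\in\{0,1/m,\ldots,1\}$.
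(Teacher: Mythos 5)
Your proof is correct. It follows the same overall skeleton as the paper's: both select, for each $m$, a nearest point to $u(t)$ from the first $m+1$ elements of a countable dense family containing $0$, using Lemma \ref{measurable-norms} to get measurability of $t\mapsto{\|u(t)-f_k\|}_{L^{p}(\mathbb{R}^{n})}$ and hence of the selection. The genuine difference is in how the constraint ${\|u_{m}(t)\|}_{L^{p}(\mathbb{R}^{n})}\leq{\|u(t)\|}_{L^{p}(\mathbb{R}^{n})}$ is enforced. The paper enlarges the dense set $A$ to its rational dilates $\mathbb{Q}A$ and performs the minimization only over the indices $j\leq m$ with ${\|f_{j}\|}_{L^{p}(\mathbb{R}^{n})}\leq{\|u(t)\|}_{L^{p}(\mathbb{R}^{n})}$; convergence then rests on the observation that $\mathbb{Q}A$ intersected with the closed ball of radius ${\|u(t)\|}_{L^{p}(\mathbb{R}^{n})}$ still contains points arbitrarily close to $u(t)$. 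You instead minimize without constraint and rescale a posteriori by the discretized factor $\lambda_{m}(t)\in\{0,1/m,\dots,1\}$, paying for this with the (correctly executed) estimate ${\|u_{m}(t)-u(t)\|}_{L^{p}(\mathbb{R}^{n})}\leq2{\|v_{m}(t)-u(t)\|}_{L^{p}(\mathbb{R}^{n})}+m^{-1}{\|v_{m}(t)\|}_{L^{p}(\mathbb{R}^{n})}$. The two mechanisms are of comparable length and both are valid. One cosmetic point: rather than arguing ``along that subsequence'' when $v_{m}(t)=0$ occurs infinitely often, it is cleaner to note that this forces $u(t)=0$, which in turn forces $\lambda_{m}(t)=0$ and hence $u_{m}(t)=0$ for \emph{every} $m$, so full-sequence convergence is never in doubt.
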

\begin{proof}
    Let $A$ be a countable dense subset of $L^{p}(\mathbb{R}^{n})$, let $\mathbb{Q}A=\left\{qa\text{ : }q\in\mathbb{Q},\,a\in A\right\}$, and let ${(f_{j})}_{j\geq0}$ be an enumeration of $\mathbb{Q}A$ with $f_{0}=0$. For each $m\geq0$ and $t\in E$, define the non-empty set
    \begin{equation*}
        K(m,t) := \left\{j\in\{0,\cdots,m\}\text{ : }{\|f_{j}\|}_{L^{p}(\mathbb{R}^{n})}\leq{\|u(t)\|}_{L^{p}(\mathbb{R}^{n})}\right\}
    \end{equation*}
    and the integer
    \begin{equation*}
        k(m,t) := \min\left\{j\in K(m,t)\text{ : }{\|f_{j}-u(t)\|}_{L^{p}(\mathbb{R}^{n})}=\min_{i\in K(m,t)}{\|f_{i}-u(t)\|}_{L^{p}(\mathbb{R}^{n})}\right\}.
    \end{equation*}
    Then $u_{m}(t):=f_{k(m,t)}$ defines a measurable function $u_{m}:E\rightarrow L^{p}(\mathbb{R}^{n})$ with finite image, satisfying ${\|u_{m}(t)\|}_{L^{p}(\mathbb{R}^{n})}\leq{\|u(t)\|}_{L^{p}(\mathbb{R}^{n})}$ and ${\|u_{m}(t)-u(t)\|}_{L^{p}(\mathbb{R}^{n})}\overset{m\rightarrow\infty}{\rightarrow}0$ for all $t\in E$.
\end{proof}
\subsection{The product operator}
The following lemma addresses measurability issues arising from the pointwise product $u\otimes v$ and the Bony product $\mathsf{Bony}(u,v)$.
\begin{lemma}\label{product-lemma}
\begin{enumerate}[label=(\roman*)]
    \item
    If $p\in[2,\infty]$, and $u,v:(0,T)\rightarrow\mathcal{S}'(\mathbb{R}^{n})$ are weakly* measurable functions satisfying $u(t),v(t)\in L^{p}(\mathbb{R}^{n})$ for almost every $t\in(0,T)$, then the pointwise product $u\otimes v$ defines an equivalence class of weakly* measurable functions $(0,T)\rightarrow\mathcal{S}'(\mathbb{R}^{n};\mathbb{R}^{n\times n})$.
    \item
    If $u,v\in\widetilde{L}_{T}^{\alpha}\dot{B}_{\ell,\infty}^{\frac{n}{\ell}-\frac{n}{p}}(\mathbb{R}^{n};\mathbb{R}^{n})$ for some $\alpha\in[2,\infty]$, $p\in(2,\infty)$ and $\ell\in[1,p)$, then the Bony product
    \begin{equation*}
        \mathsf{Bony}(u_{i},v_{j}) := \dot{T}_{u_{i}}v_{j} + \dot{T}_{v_{j}}u_{i} + \dot{R}(u_{i},v_{j})
    \end{equation*}
    converges in $\mathcal{S}'(\mathbb{R}^{n};\mathbb{R}^{n\times n})$ for almost every $t\in(0,T)$. The resulting product $\mathsf{Bony}(u,v)$ defines an equivalence class of weakly* measurable functions $(0,T)\rightarrow\mathcal{S}_{h}'(\mathbb{R}^{n};\mathbb{R}^{n\times n})$.
\end{enumerate}
\end{lemma}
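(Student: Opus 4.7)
For part (i), the plan is to use Pettis' theorem (Lemma \ref{pettis}) to reduce to simple functions. When $p<\infty$, apply Pettis to both $u$ and $v$ on the set where they take values in $L^p$, producing measurable simple-function approximations $u_m,v_m:(0,T)\to L^p(\mathbb{R}^n)$ with $\|u_m(t)\|_{L^p}\le\|u(t)\|_{L^p}$ and $u_m(t)\to u(t)$, $v_m(t)\to v(t)$ in $L^p$ for a.e.\ $t$. Since $p/2\ge 1$, the pointwise product $u_m\otimes v_m$ is a measurable simple function into $L^{p/2}(\mathbb{R}^n;\mathbb{R}^{n\times n})$, and the identity
\begin{equation*}
u_m\otimes v_m - u\otimes v = u_m\otimes(v_m-v) + (u_m-u)\otimes v
\end{equation*}
combined with H\"older's inequality shows $u_m(t)\otimes v_m(t)\to u(t)\otimes v(t)$ in $L^{p/2}$ for a.e.\ $t$. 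Testing against any $\phi\in\mathcal{S}(\mathbb{R}^n)$, the map $t\mapsto\langle u(t)\otimes v(t),\phi\rangle$ is therefore an a.e.\ limit of measurable functions, so $u\otimes v$ is weakly* measurable into $\mathcal{S}'(\mathbb{R}^n;\mathbb{R}^{n\times n})$.

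The case $p=\infty$ does not fall under Pettis because $L^\infty$ is not separable, so my plan is to localize. For $R>0$, let $\chi_R\in C_c^\infty(\mathbb{R}^n)$ satisfy $\chi_R=1$ on $B(0,R)$ and $\mathrm{supp}\,\chi_R\subseteq B(0,2R)$. The restriction $u|_{\overline{B(0,2R)}}$ is weakly measurable into $L^2(\overline{B(0,2R)})$: by density of $\mathcal{D}(B(0,2R))$ in $L^2(\overline{B(0,2R)})$ and weak* measurability of $u$ as an $\mathcal{S}'$-valued function, $t\mapsto\int u(t)\psi\,\mathrm{d}x$ is measurable for every $\psi$ in a dense subset of $L^2(\overline{B(0,2R)})$. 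Applying Pettis in $L^2(\overline{B(0,2R)})$ to both $u$ and $v$ and using Cauchy--Schwarz in place of H\"older shows that $t\mapsto\int u(t)v(t)\chi_R\phi\,\mathrm{d}x$ is measurable for every $\phi\in\mathcal{S}(\mathbb{R}^n)$. Dominated convergence with majorant $\|u(t)\|_{L^\infty}\|v(t)\|_{L^\infty}|\phi|\in L^1(\mathbb{R}^n)$ then lets us send $R\to\infty$ and conclude measurability of $t\mapsto\langle u(t)\otimes v(t),\phi\rangle$.

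For part (ii), Bernstein's inequality \eqref{useful-inequality-2} provides $\dot{S}_{j-1}u(t),\dot{\Delta}_ju(t),\dot{\Delta}_jv(t)\in L^\infty(\mathbb{R}^n)$ for a.e.\ $t$, while weak* measurability of these pieces in $t$ follows from the identity $\langle\dot{\Delta}_ju(t),\phi\rangle=\langle u(t),\dot{\Delta}_j\phi\rangle$ (and similarly for $\dot{S}_{j-1}$), using that $\dot{\Delta}_j\phi,\dot{S}_{j-1}\phi\in\mathcal{S}(\mathbb{R}^n)$. Part (i) applied with $p=\infty$ therefore yields weak* measurability of each product $\dot{S}_{j-1}u_i\,\dot{\Delta}_jv_k$, $\dot{S}_{j-1}v_k\,\dot{\Delta}_ju_i$, $\dot{\Delta}_ju_i\,\dot{\Delta}_{j-\nu}v_k$, and hence of every finite partial sum appearing in $\dot{T}_{u_i}v_k$, $\dot{T}_{v_k}u_i$, $\dot{R}(u_i,v_k)$. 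The Bony estimates \eqref{bony-estimate-1} and \eqref{bony-estimate-3}, combined exactly as in the derivation of \eqref{B-product-estimate-2} (selecting an auxiliary exponent $q$ with $\tfrac{1}{p}>\tfrac{1}{q}>\tfrac{2}{p}-\tfrac{1}{\ell}$), control $\mathsf{Bony}(u,v)$ in $\widetilde{L}_T^{\alpha/2}\dot{B}_{\ell,\infty}^{n/\ell-2n/p}(\mathbb{R}^n;\mathbb{R}^{n\times n})$. Since the triple $(n/\ell-2n/p,\ell,\infty)$ satisfies \eqref{negative-scaling} (as $-2n/p<0$), Lemma \ref{convergence-lemma} ensures that for a.e.\ $t$ the Bony series converges in $\mathcal{S}'(\mathbb{R}^n)$ with limit in $\mathcal{S}_h'(\mathbb{R}^n)$; pointwise a.e.\ convergence of the weakly* measurable partial sums then transfers to weak* measurability of $\mathsf{Bony}(u,v)$.

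I expect the main obstacle to be the $p=\infty$ case in part (i): non-separability of $L^\infty$ forces the detour through local $L^2$-Pettis with a compactly supported cutoff and a dominated-convergence passage to the limit. Once part (i) is available, part (ii) is a routine combination of Bernstein, the Bony estimates already used in Section \ref{estimates-for-the-fixed-point-problem}, and Lemma \ref{convergence-lemma}.
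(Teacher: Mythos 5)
Your proof is correct. Part (ii) follows the paper's argument essentially step for step: Bernstein plus low-frequency summability to place the Littlewood--Paley pieces in $L^{\infty}(\mathbb{R}^{n})$, part (i) with $p=\infty$ for weak* measurability of each dyadic product, and then the estimate \eqref{B-product-estimate-2} together with the convergence criteria to get convergence in $\mathcal{S}'$ with limit in $\mathcal{S}_{h}'$ for almost every $t$. Where you genuinely diverge is part (i). The paper handles all $p\in[2,\infty]$ uniformly by moving one factor onto the test function: writing $\langle u_{i}v_{j},\phi\rangle=\langle u_{i},v_{j}\phi\rangle$ for $\phi\in\mathcal{D}(\mathbb{R}^{n})$, the function $v_{j}\phi$ takes values in $L^{p'}(\mathbb{R}^{n})$ with $p'\leq2<\infty$ (H\"older on the compact support of $\phi$), so Pettis (Lemma \ref{pettis}) applies to $v_{j}\phi$ and Lemma \ref{measurability-duality} finishes the job. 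This sidesteps entirely the non-separability of $L^{\infty}$ that you identify as the main obstacle. Your route --- Pettis applied to $u$ and $v$ themselves when $p<\infty$, and a cutoff/local-$L^{2}$/dominated-convergence argument when $p=\infty$ --- is sound, but it forces a case split and requires transplanting Pettis and the duality lemma to $L^{2}$ of a ball, whereas the paper's duality trick is a one-liner. Two small imprecisions worth tightening: first, in part (ii) the membership $\dot{S}_{j-1}u(t)\in L^{\infty}(\mathbb{R}^{n})$ is not a single application of \eqref{useful-inequality-2} to one block; it needs the convergence of $\sum_{k\leq j-2}2^{kn/p}$ over the low frequencies (the ``infrared convergence'' the paper flags), which holds here precisely because $p<\infty$. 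Second, the remainder terms $\dot{\Delta}_{j}u\,\dot{\Delta}_{j-\nu}v$ are spectrally supported in balls rather than annuli, so the convergence statement you need for $\dot{R}(u,v)$ is the note attached to Lemma \ref{remainder} (requiring $\frac{n}{\ell}+\frac{n}{q}-\frac{2n}{p}>0$, which your choice of $q$ guarantees) rather than Lemma \ref{convergence-lemma} itself.
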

\begin{proof}
\begin{enumerate}[label=(\roman*)]
    \item
    We may modify $u$ and $v$ on a zero-measure subset of $(0,T)$, such that $u(t),v(t)\in L^{p}(\mathbb{R}^{n})$ for all $t\in(0,T)$. We prove weak* measurability of $u\otimes v$ by writing $\langle u_{i}v_{j},\phi\rangle=\langle u_{i},v_{j}\phi\rangle$ for $\phi\in\mathcal{D}(\mathbb{R}^{n})$, and approximating the weakly* measurable function $v_{j}\phi:(0,T)\rightarrow L^{p'}(\mathbb{R}^{n})$ using Lemma \ref{pettis} (by Lemma \ref{measurability-duality}, the resulting functions $\langle u_{i},{(v_{j}\phi)}^{m}\rangle$ of $t$ are measurable, so in the limit the function $\langle u_{i},v_{j}\phi\rangle$ of $t$ is measurable).
    \item
    We have that $(\frac{n}{\ell}-\frac{n}{p},\ell,\infty)$ satisfy the infrared convergence condition \eqref{negative-scaling}, so we may modify $u$ and $v$ on a zero-measure subset of $(0,T)$, such that $\dot{\Delta}_{j}u(t),\dot{S}_{j}u(t),\dot{\Delta}_{j}v(t),\dot{S}_{j}v(t)\in L^{\infty}(\mathbb{R}^{n})$ for all $t\in(0,T)$. (The ability to do this for $\dot{S}_{j}u$ and $\dot{S}_{j}v$ follows from infrared convergence.). Then the Bony product is a sum of terms of the form $\widetilde{u}\otimes\widetilde{v}$, where $\widetilde{u}$ and $\widetilde{v}$ are weakly* measurable functions $(0,T)\rightarrow L^{\infty}(\mathbb{R}^{n})$, so weak* measurability of $\widetilde{u}\otimes\widetilde{v}$ can be proved as in part (i). By the calculation \eqref{B-product-estimate-2}, and the fact that $(\frac{n}{\ell}-\frac{2n}{p},\ell,\infty)$ satisfy \eqref{negative-scaling}, we have that the Bony product converges to an equivalence class of weakly* measurable functions $(0,T)\rightarrow\mathcal{S}_{h}'(\mathbb{R}^{n};\mathbb{R}^{n\times n})$.
\end{enumerate}
\end{proof}


\subsection{The operator $\mathbb{P}\nabla\cdot$}\label{sigma}
The following lemma describes how the operator $\mathbb{P}\nabla\cdot$ is defined on Lebesgue spaces, and how the operator ${(\mathbb{P}\nabla\cdot)}_{\varphi}:=\sum_{j\in\mathbb{Z}}\mathbb{P}\nabla\cdot\dot{\Delta}_{j}$ is defined on Besov spaces of sufficiently low regularity.
\begin{lemma}\label{sigma-lemma}
\begin{enumerate}[label=(\roman*)]
    \item
    For all $p\in[1,\infty]$, the expression
    \begin{equation*}
        \langle (\mathbb{P}\nabla\cdot W)_i,\phi\rangle := -\langle W_{jk},\mathbb{P}_{ij}\nabla_{k}\phi\rangle \qquad \text{for }\phi\in\mathcal{S}(\mathbb{R}^{n})
    \end{equation*}
    defines an operator $\mathbb{P}\nabla\cdot:L^{p}(\mathbb{R}^{n};\mathbb{R}^{n\times n})\rightarrow\mathcal{S}'(\mathbb{R}^{n};\mathbb{R}^{n})$ which commutes with $\dot{\Delta}_{j}$, and satisfies
    \begin{equation*}
        \mathbb{P}\nabla\cdot W = \sum_{j\in\mathbb{Z}}\mathbb{P}\nabla\cdot\dot{\Delta}_{j}W \quad \text{in }\mathcal{S}'(\mathbb{R}^{n}) \qquad \text{for all }W\in L^{p}(\mathbb{R}^{n};\mathbb{R}^{n\times n}).
    \end{equation*}
    \item
    If $W\in\dot{B}_{p,q}^{s}(\mathbb{R}^{n};\mathbb{R}^{n\times n})$ with $(s-1,p,q)$ satisfying \eqref{negative-scaling}, then the series
    \begin{equation*}
        {(\mathbb{P}\nabla\cdot)}_{\varphi} W := \sum_{j\in\mathbb{Z}}\mathbb{P}\nabla\cdot\dot{\Delta}_{j}W
    \end{equation*}
    converges in $\mathcal{S}'(\mathbb{R}^{n};\mathbb{R}^{n})$.
\end{enumerate}
\end{lemma}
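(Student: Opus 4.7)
My plan is to handle the two parts separately; part (i) requires some technical care about regularity of the symbol at $\xi=0$, while part (ii) is a direct application of the convergence criterion in Lemma~\ref{convergence-lemma}. For part (i), the key preliminary is that $\mathbb{P}_{ij}\nabla_{k}\phi\in L^{p'}(\mathbb{R}^{n})$ for every $\phi\in\mathcal{S}(\mathbb{R}^{n})$ and every $p'\in[1,\infty]$, with norm controlled by finitely many Schwartz seminorms of $\phi$. The cleanest route is through the physical-space identity $\mathbb{P}_{ij}\nabla_{k}\phi=\partial_{i}\partial_{j}\partial_{k}(\Delta^{-1}\phi)$ (easily verified at the Fourier level using $\widehat{\Delta^{-1}\phi}=-\hat\phi/|\xi|^{2}$), since for $\phi\in\mathcal{S}$ the function $\Delta^{-1}\phi$ is smooth and admits a Newton-potential far-field expansion: for $n\geq3$ the leading term is of size $|x|^{2-n}$, so three derivatives give $|x|^{-(n+1)}$, which is integrable on $\mathbb{R}^{n}$ and bounded (the low-dimensional cases $n=1,2$ are handled similarly --- in $n=1$, $\partial_{i}\partial_{j}\partial_{k}\Delta^{-1}\phi$ is literally $\phi'$). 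With $\mathbb{P}_{ij}\nabla_{k}\phi\in L^{1}\cap L^{\infty}$ and continuity in Schwartz seminorms in hand, the pairing $\phi\mapsto-\langle W_{jk},\mathbb{P}_{ij}\nabla_{k}\phi\rangle$ defines a tempered distribution $(\mathbb{P}\nabla\cdot W)_{i}$.

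Commutation with $\dot{\Delta}_{j}$ is then immediate from self-adjointness of $\dot{\Delta}_{j}$ (its symbol $\varphi(2^{-j}\cdot)$ is real and even) and commutativity of Fourier multipliers at the Schwartz level: $\langle\dot{\Delta}_{j}(\mathbb{P}\nabla\cdot W)_{i},\phi\rangle=-\langle W_{jk},\dot{\Delta}_{j}\mathbb{P}_{ij}\nabla_{k}\phi\rangle=-\langle\dot{\Delta}_{j}W_{jk},\mathbb{P}_{ij}\nabla_{k}\phi\rangle$. For the series identity, I would use that $\dot{S}_{K}W\to W$ in $\mathcal{S}'$ (Lemma~\ref{littlewood-paley-decomposition}) to write $\mathbb{P}\nabla\cdot W=\mathbb{P}\nabla\cdot\dot{S}_{J}W+\sum_{j\geq J}\mathbb{P}\nabla\cdot\dot{\Delta}_{j}W$, and then show $\mathbb{P}\nabla\cdot\dot{S}_{J}W\to0$ in $\mathcal{S}'$ as $J\to-\infty$. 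By duality this reduces to $\|\mathbb{P}_{ij}\nabla_{k}\dot{S}_{J}\phi\|_{L^{p'}}\to0$, which I would verify by the change of variable $\xi=2^{J}\eta$ in the Fourier integral: one obtains $\mathbb{P}_{ij}\nabla_{k}\dot{S}_{J}\phi(x)=2^{J(n+1)}\tilde{g}_{J}(2^{J}x)$ for a family $\tilde{g}_{J}$ uniformly bounded in every $L^{p'}$ (the $\eta$-integrand is supported in $\mathrm{supp}\,\chi$, and $\hat\phi(2^{J}\eta)$ together with its $\eta$-derivatives is uniformly bounded for $J\leq0$), so that $\|\mathbb{P}_{ij}\nabla_{k}\dot{S}_{J}\phi\|_{L^{p'}}\lesssim 2^{J(n+1-n/p')}\to0$, since $n+1-n/p'\geq1>0$ for every $p'\in[1,\infty]$.

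Part (ii) is a direct application of Lemma~\ref{convergence-lemma}: $\dot{\Delta}_{j}W$ has Fourier support in $2^{j}\mathcal{C}$, so does $\mathbb{P}\nabla\cdot\dot{\Delta}_{j}W$, and \eqref{useful-inequality-2} applied to the smooth, degree-one, positive-homogeneous symbol $i\xi_{i}\xi_{j}\xi_{k}/|\xi|^{2}$ on $\mathbb{R}^{n}\setminus\{0\}$ yields $\|\mathbb{P}\nabla\cdot\dot{\Delta}_{j}W\|_{L^{p}}\lesssim_{\varphi}2^{j}\|\dot{\Delta}_{j}W\|_{L^{p}}$; hence $\{2^{j(s-1)}\|\mathbb{P}\nabla\cdot\dot{\Delta}_{j}W\|_{L^{p}}\}_{j}\in\ell^{q}(\mathbb{Z})$ with norm bounded by $\|W\|_{\dot{B}_{p,q}^{s}}$, and since $(s-1,p,q)$ satisfies \eqref{negative-scaling}, Lemma~\ref{convergence-lemma} at regularity $s-1$ delivers the $\mathcal{S}'$-convergence. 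The main obstacle is the case $p=\infty$ in part (i): there $W$ need not lie in $\mathcal{S}_{h}'(\mathbb{R}^{n})$, so the Littlewood-Paley decomposition can fail to capture constant Fourier modes, and the vanishing $\mathbb{P}\nabla\cdot\dot{S}_{J}W\to0$ must be extracted from the linear vanishing of the symbol at $\xi=0$ --- precisely the decisive factor $2^{J}$ that appears after the rescaling above.
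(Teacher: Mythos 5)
Your part (ii) is correct and is exactly the paper's argument: Bernstein \eqref{useful-inequality-2} for the degree-one homogeneous symbol gives $2^{j(s-1)}\|\mathbb{P}\nabla\cdot\dot{\Delta}_{j}W\|_{L^{p}}\lesssim 2^{js}\|\dot{\Delta}_{j}W\|_{L^{p}}$, and the Note in Lemma \ref{convergence-lemma} at regularity $s-1$ finishes. For part (i) you take a genuinely different route. The paper never leaves the test-function side: it proves $\sum_{j}\|\sigma(-D)\dot{\Delta}_{j}\phi\|_{L^{p'}}\lesssim_{\sigma}\|\phi\|_{\dot{B}_{p',1}^{1}(\mathbb{R}^{n})}$ (controlled by Schwartz seminorms via Lemma \ref{convergence-duality}), obtains $\sigma(-D)\phi=\sum_{j}\sigma(-D)\dot{\Delta}_{j}\phi$ in $L^{p'}$ by dominated convergence on the Fourier side, and then the definition on $L^{p}$, the commutation, and the two-sided series identity all follow by duality from this single absolutely convergent expansion. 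Your physical-space argument that $\mathbb{P}_{ij}\nabla_{k}\phi\in L^{1}\cap L^{\infty}$ via the Newton potential is a legitimate alternative (the $O(|x|^{-n-1})$ far-field decay is standard, though "three derivatives of the leading term" glosses over the need to control derivatives of the remainder of the multipole expansion as well).

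There is, however, a genuine gap in your low-frequency step, and it sits precisely in the case you yourself flag as the main obstacle. Writing $\sigma(\eta)=\mathrm{i}\eta_{i}\eta_{j}\eta_{k}/|\eta|^{2}$, you need $\tilde{g}_{J}=\mathcal{F}^{-1}\bigl[\sigma(\eta)\chi(\eta)\mathcal{F}\phi(2^{J}\eta)\bigr]$ to be bounded in $L^{p'}(\mathbb{R}^{n})$ uniformly in $J\leq0$, and your justification (compact support in $\supp\chi$ plus uniform bounds on $\mathcal{F}\phi(2^{J}\eta)$ and its $\eta$-derivatives) does not deliver this for $p'<\infty$: the symbol $\sigma$ is not differentiable at $\eta=0$ (for $n\geq2$ its second derivatives blow up like $|\eta|^{-1}$), so integration by parts in $\eta$ cannot produce the spatial decay needed to put $\tilde{g}_{J}$ in $L^{p'}$. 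For $p'=1$ (i.e.\ $p=\infty$) the uniform $L^{1}$ bound on $\tilde{g}_{J}$ is the entire content of the step. The repair is to write $\tilde{g}_{J}=\mathcal{F}^{-1}[\sigma\chi]*\bigl(2^{-Jn}\phi(2^{-J}\cdot)\bigr)$ and prove the $J$-independent fact $\mathcal{F}^{-1}[\sigma\chi]\in L^{1}\cap L^{\infty}$, which one gets by summing $\|\mathcal{F}^{-1}[\sigma\varphi(2^{-j}\cdot)\chi]\|_{L^{1}}\lesssim_{\sigma}2^{j}$ over $j\leq1$ --- but that dyadic summation, exploiting the factor $2^{j}$ from degree-one homogeneity, is exactly the paper's argument, so it cannot be elided. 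A smaller point: passing from $W=\dot{S}_{J}W+\sum_{j\geq J}\dot{\Delta}_{j}W$ in $\mathcal{S}'$ to the same identity after applying $\mathbb{P}\nabla\cdot$ means testing against $\mathbb{P}_{ij}\nabla_{k}\phi$, which is not Schwartz; you need $\langle\dot{S}_{K}W,\psi\rangle\rightarrow\langle W,\psi\rangle$ for $\psi\in L^{p'}$ (via $\dot{S}_{K}\psi\rightarrow\psi$ in $L^{p'}$ when $p'<\infty$, or $\dot{S}_{K}W\rightarrow W$ in $L^{1}$ when $p=1$), not merely convergence in $\mathcal{S}'$.
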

\begin{proof}
    Let $\sigma$ be any smooth function on $\mathbb{R}^{n}\setminus\{0\}$ which is positive homogeneous of degree 1.

    The expression $\sigma(-D)=\mathcal{F}^{-1}\sigma(-\xi)\mathcal{F}=\mathcal{F}\sigma(\xi)\mathcal{F}^{-1}$ defines an operator $\sigma(-D):\mathcal{S}(\mathbb{R}^{n})\rightarrow L^{\infty}(\mathbb{R}^{n})$ which commutes with $\dot{\Delta}_{j}$. Applying dominated convergence to the inverse Fourier integral defining $\mathcal{F}^{-1}\sigma(-\xi)\mathcal{F}\phi$, we have
    \begin{equation}\label{sigma-series}
        \sigma(-D)\phi = \sum_{j\in\mathbb{Z}}\sigma(-D)\dot{\Delta}_{j}\phi \quad \text{almost everywhere} \qquad \text{for all }\phi\in\mathcal{S}(\mathbb{R}^{n}).
    \end{equation}
    By Lemma \ref{useful-inequalities}, for $p\in[1,\infty]$ we have
    \begin{equation}\label{sigma-series-estimate}
        \sum_{j\in\mathbb{Z}}{\|\sigma(-D)\dot{\Delta}_{j}\phi\|}_{L^{p'}(\mathbb{R}^{n})} \lesssim_{\sigma} {\|\phi\|}_{\dot{B}_{p',1}^{1}(\mathbb{R}^{n})} \qquad \text{for all }\phi\in\mathcal{S}(\mathbb{R}^{n}),
    \end{equation}
    where ${\|\phi\|}_{\dot{B}_{p',1}^{1}(\mathbb{R}^{n})}$ is controlled by finitely many seminorms in $\mathcal{S}(\mathbb{R}^{n})$ in view of Lemma \ref{convergence-duality}; in particular, it follows that  one not only has convergence almost everywhere in \eqref{sigma-series}, but also in $L^{p'}(\mathbb{R}^{n})$. By \eqref{sigma-series} and \eqref{sigma-series-estimate}, and the fact that $\sigma(-D)$ commutes with $\dot{\Delta}_{j}$, it follows that the expression
    \begin{equation*}
        \langle \sigma(D)w,\phi\rangle := \langle w,\sigma(-D)\phi\rangle \qquad \text{for }w\in L^{p}(\mathbb{R}^{n})\text{ and }\phi\in\mathcal{S}(\mathbb{R}^{n})
    \end{equation*}
    defines an operator $\sigma(D):L^{p}(\mathbb{R}^{n})\rightarrow\mathcal{S}'(\mathbb{R}^{n})$ which commutes with $\dot{\Delta}_{j}$, and satisfies
    \begin{equation*}
        \sigma(D)w = \sum_{j\in\mathbb{Z}}\sigma(D)\dot{\Delta}_{j}w \quad \text{in }\mathcal{S}'(\mathbb{R}^{n}) \qquad \text{for all }w\in L^{p}(\mathbb{R}^{n}).
    \end{equation*}

    On the other hand, by Lemma \ref{useful-inequalities} and Lemma \ref{convergence-lemma}, we have that if $w\in\dot{B}_{p,q}^{s}(\mathbb{R}^{n})$, with $(s-1,p,q)$ satisfying \eqref{negative-scaling}, then the series
    \begin{equation*}
        {\sigma(D)}_{\varphi}w := \sum_{j\in\mathbb{Z}}\sigma(D)\dot{\Delta}_{j}w
    \end{equation*}
    converges in $\mathcal{S}'(\mathbb{R}^{n})$; the use of Lemma \ref{convergence-lemma} is justified by noting that $\dot{\Delta}_{j}w$ is spectrally supported on frequencies $|\xi|\approx2^{j}$, so $\sigma(D)\dot{\Delta}_{j}w$ is spectrally supported on frequencies $|\xi|\approx2^{j}$.

    To deduce Lemma \ref{sigma-lemma}, we take $\sigma(\xi)=\mathrm{i}\frac{\xi\otimes\xi\otimes\xi}{{|\xi|}^{2}}$, $-\mathbb{P}_{ij}\nabla_{k}\phi=\sigma_{ijk}(-D)\phi$, and $\langle{(\mathbb{P}\nabla\cdot W)}_{i},\phi\rangle=\langle\sigma_{ijk}(D)W_{jk},\phi\rangle=\langle W_{jk},\sigma_{ijk}(-D)\phi\rangle$.
\end{proof}
When applying Lemma \ref{sigma-lemma} in the context of the space-time spaces $L_{T}^{\alpha}L^{p}(\mathbb{R}^{n})$ and $\widetilde{L}_{T}^{\alpha}\dot{B}_{p,q}^{s}(\mathbb{R}^{n})$, measurability issues are addressed as follows: with notation as in the proof of Lemma \ref{sigma-lemma}, if $\dot{\Delta}_{j}w:(0,T)\rightarrow L^{p}(\mathbb{R}^{n})$ is a weakly* measurable function, then for all $\phi\in\mathcal{S}(\mathbb{R}^{n})$ we have $\langle\sigma(D)\dot{\Delta}_{j}w,\phi\rangle = \langle\dot{\Delta}_{j}w,\sigma(-D)\phi\rangle$, where \eqref{sigma-series} and \eqref{sigma-series-estimate} tell us that $\sigma(-D)\phi\in L^{p'}(\mathbb{R}^{n})$; by Lemma \ref{measurability-duality}, it follows that $\sigma(D)\dot{\Delta}_{j}w:(0,T)\rightarrow \mathcal{S}'(\mathbb{R}^{n})$ is weakly* measurable.
\subsection{The operator $\langle Gw(t),\phi\rangle = \int_{0}^{t}\langle e^{(t-s)\Delta}w(s),\phi\rangle\,\mathrm{d}s$}
We give the
\begin{proof}[Proof of Lemma \ref{bilinear-G-lemma}]
    For $j\in\mathbb{Z}$ and almost every $s\in(0,T)$ we have
    \begin{equation}\label{G-measurability}
        [e^{t\Delta}\dot{\Delta}_{j}w(s)](x) = \langle\dot{\Delta}_{j}w(s),\widetilde{\Phi}(t,x)\rangle \quad \text{for all }(t,x)\in(0,\infty)\times\mathbb{R}^{n},
    \end{equation}
    where $[\widetilde{\Phi}(t,x)](y):=\Phi(t,x-y)$ defines a weakly* measurable function $\widetilde{\Phi}:(0,\infty)\times\mathbb{R}^{n}\rightarrow L^{1}(\mathbb{R}^{n})$, while $\dot{\Delta}_{j}w\in L_{T}^{\beta}L^{\infty}(\mathbb{R}^{n})$ by Lemma \ref{useful-inequalities}. Approximating $\widetilde{\Phi}$ using Lemma \ref{pettis}, we deduce that the expression \eqref{G-measurability} defines an equivalence class of measurable functions of  ${(s,t,x)\in(0,T)\times(0,\infty)\times\mathbb{R}^{n}}$. This addresses the measurability issues in the derivation of \eqref{Gj-estimate}, as well as in the following calculations.

    For every $t\in(0,T)$, and almost every $s\in(0,t)$, we have $w(s)\in\mathcal{S}_{h}'(\mathbb{R}^{n})$ and hence
    \begin{equation*}
        \langle e^{(t-s)\Delta}w(s),\phi\rangle = \sum_{j\in\mathbb{Z}}\langle e^{(t-s)\Delta}\dot{\Delta}_{j}w(s),\phi\rangle,
    \end{equation*}
    so by two applications of Fubini's theorem we have
    \begin{equation*}
    \begin{aligned}
        \langle Gw(t),\phi\rangle &= \int_{0}^{t}\sum_{j\in\mathbb{Z}}\langle e^{(t-s)\Delta}\dot{\Delta}_{j}w(s),\phi\rangle\,\mathrm{d}s \\
        &\overset{(1)}{=} \sum_{j\in\mathbb{Z}}\int_{0}^{t}\langle e^{(t-s)\Delta}\dot{\Delta}_{j}w(s),\phi\rangle\,\mathrm{d}s \\
        &\overset{(2)}{=} \sum_{j\in\mathbb{Z}}\int_{\mathbb{R}^{n}}\left(\int_{0}^{t}[e^{(t-s)\Delta}\dot{\Delta}_{j}w(s)](x)\,\mathrm{d}s\right)\phi(x)\,\mathrm{d}x.
    \end{aligned}
    \end{equation*}
    The use of Fubini's theorem in $\overset{(1)}{=}$ is justified by
    \begin{equation*}
    \begin{aligned}
        \int_{0}^{t}\sum_{j\in\mathbb{Z}}\left|\langle e^{(t-s)\Delta}\dot{\Delta}_{j}w(s),\phi\rangle\right|\,\mathrm{d}s &\leq \int_{0}^{t}\sum_{j\in\mathbb{Z}}\sum_{|\nu|\leq1}\left|\langle e^{(t-s)\Delta}\dot{\Delta}_{j}w(s),\dot{\Delta}_{j-\nu}\phi\rangle\right|\,\mathrm{d}s \\
        &\leq \int_{0}^{t}\sum_{j\in\mathbb{Z}}\sum_{|\nu|\leq1}{\|e^{(t-s)\Delta}\dot{\Delta}_{j}w(s)\|}_{L^{r}(\mathbb{R}^{n})}{\|\dot{\Delta}_{j-\nu}\phi\|}_{L^{r'}(\mathbb{R}^{n})}\,\mathrm{d}s \\
        &= \sum_{j\in\mathbb{Z}}\sum_{|\nu|\leq1}\widetilde{G}_{j,r}w(t){\|\dot{\Delta}_{j-\nu}\phi\|}_{L^{r'}(\mathbb{R}^{n})} \\
        &\lesssim_{n} T^{1-\frac{1}{\beta}}{\|w\|}_{\widetilde{L}_{T}^{\beta}\dot{B}_{r,q}^{s}(\mathbb{R}^{n})}{\left\|j\mapsto\sum_{|\nu|\leq1}2^{-js}{\|\dot{\Delta}_{j-\nu}\phi\|}_{L^{r'}(\mathbb{R}^{n})}\right\|}_{l^{q'}(\mathbb{Z})} \\
        &\lesssim_{n,r,s} T^{1-\frac{1}{\beta}}{\|w\|}_{\widetilde{L}_{T}^{\beta}\dot{B}_{r,q}^{s}(\mathbb{R}^{n})}{\|\phi\|}_{\dot{B}_{r',q'}^{-s}(\mathbb{R}^{n})},
    \end{aligned}
    \end{equation*}
    where we used \eqref{Gj-estimate}${}_{\gamma=\infty}$ in the fourth line. By Lemma \ref{convergence-duality}, the assumption that $(s,r,q)$ satisfy \eqref{negative-scaling} ensures that ${\|\phi\|}_{\dot{B}_{r',q'}^{-s}(\mathbb{R}^{n})}$ is controlled by finitely many seminorms in $\mathcal{S}(\mathbb{R}^{n})$. The use of Fubini's theorem in $\overset{(2)}{=}$ is justified by
    \begin{equation*}
    \begin{aligned}
        \int_{0}^{t}\left(\int_{\mathbb{R}^{n}}\left|[e^{(t-s)\Delta}\dot{\Delta}_{j}w(s)](x)\phi(x)\right|\,\mathrm{d}x\right)\mathrm{d}s &\leq \int_{0}^{t}{\|e^{(t-s)\Delta}\dot{\Delta}_{j}w(s)\|}_{L^{r}(\mathbb{R}^{n})}{\|\phi\|}_{L^{r'}(\mathbb{R}^{n})}\,\mathrm{d}s \\
        &\lesssim_{n} T^{1-\frac{1}{\beta}}2^{-js}{\|w\|}_{\widetilde{L}_{T}^{\beta}\dot{B}_{r,\infty}^{s}(\mathbb{R}^{n})}{\|\phi\|}_{L^{r'}(\mathbb{R}^{n})},
    \end{aligned}
    \end{equation*}
    where we used \eqref{Gj-estimate}${}_{\gamma=\infty}$ in the second line. We have therefore shown that $Gw(t)=\sum_{j\in\mathbb{Z}}G_{j}w(t)$ in $\mathcal{S}'(\mathbb{R}^{n})$ for all $t\in(0,T)$, where
    \begin{equation*}
        \langle G_{j}w(t),\phi\rangle = \int_{\mathbb{R}^{n}}\left(\int_{0}^{t}[e^{(t-s)\Delta}\dot{\Delta}_{j}w(s)](x)\,\mathrm{d}s\right)\phi(x)\,\mathrm{d}x = \int_{0}^{t}\langle e^{(t-s)\Delta}\dot{\Delta}_{j}w(s),\phi\rangle\,\mathrm{d}s.
    \end{equation*}
    The middle expression in this last equality consists of integrals of measurable functions, so the functions $G_{j}w:(0,T)\rightarrow\mathcal{S}'(\mathbb{R}^{n})$ are weakly* measurable, so $Gw$ is weakly* measurable. We establish \eqref{bilinear-G-projection} by writing
    \begin{equation*}
        \langle \dot{\Delta}_{j}Gw(t),\phi\rangle = \langle Gw(t),\dot{\Delta}_{j}\phi\rangle = \int_{0}^{t}\langle e^{(t-s)\Delta}w(s),\dot{\Delta}_{j}\phi\rangle\,\mathrm{d}s = \langle G_{j}w(t),\phi\rangle.
    \end{equation*}
\end{proof}
\subsection{The fixed point problem}
We give the
\begin{proof}[Proof of Proposition \ref{lorentz-besov-equiv}]
    \begin{enumerate}[label=(\roman*)]
        \item By the estimate \eqref{besov-heat-estimate-1} and the interpolation \eqref{interpolation-geometric}, we have $S[f]\in\widetilde{L}_{T}^{2}\dot{B}_{p_{0},\infty}^{s_{0}+1}(\mathbb{R}^{n})\cap\widetilde{L}_{T}^{2}\dot{B}_{p_{0},\infty}^{s_{0}}(\mathbb{R}^{n})\subseteq\widetilde{L}_{T}^{2}\dot{B}_{p_{0},1}^{0}(\mathbb{R}^{n})$. The estimate \eqref{besov-uniqueness-bilinear-estimate} tells us that if $u\in X_{\ell,\epsilon}^{\alpha}(T)$ then $B[u,u]\in\widetilde{L}_{T}^{\alpha}\dot{B}_{\ell,1}^{s_{\ell}+\epsilon+\frac{2}{\alpha}}(\mathbb{R}^{n})\subseteq\widetilde{L}_{T}^{\alpha}\dot{B}_{p,1}^{0}(\mathbb{R}^{n})$. For any $u,v\in\widetilde{L}_{T}^{2}\dot{B}_{p_{0},1}^{0}(\mathbb{R}^{n})+\widetilde{L}_{T}^{\alpha}\dot{B}_{p,1}^{0}(\mathbb{R}^{n})$, the Bony decomposition $u\otimes v=\mathsf{Bony}(u,v)$ is justified by the remarks preceding Lemma \ref{paraproduct}, and the identity $\mathbb{P}\nabla\cdot(u\otimes v)={(\mathbb{P}\nabla\cdot)}_{\varphi}(u\otimes v)$ is justified by the fact (from Lemma \ref{sigma-lemma}) that $\mathbb{P}\nabla\cdot$ agrees with ${(\mathbb{P}\nabla\cdot)}_{\varphi}$ when acting on Lebesgue spaces.
        \item Let $W\in L_{T}^{1}L^{p}(\mathbb{R}^{n};\mathbb{R}^{n\times n})$. For almost every $s\in(0,T)$ we have
        \begin{equation}\label{besov-lorentz-equiv-meas}
            \int_{\mathbb{R}^{n}}\nabla_{k}\mathcal{T}_{ij}(t,x-y)[W_{jk}(s)](y)\,\mathrm{d}y = \langle W_{jk}(s),\widetilde{\mathcal{T}}_{ijk}(t,x)\rangle \quad \text{for all }(t,x)\in(0,\infty)\times\mathbb{R}^{n},
        \end{equation}
        where $[\widetilde{\mathcal{T}}_{ijk}(t,x)](y):=\nabla_{k}\mathcal{T}_{ij}(t,x-y)$ defines a weakly* measurable function $\widetilde{\mathcal{T}}_{ijk}:(0,\infty)\times\mathbb{R}^{n}\rightarrow L^{p'}(\mathbb{R}^{n})$. (The estimate ${\|\nabla\mathcal{T}(t)\|}_{L^{q}(\mathbb{R}^{n})}\lesssim_{n}t^{-\frac{1}{2}\left(1+\frac{n}{q'}\right)}$ for $q\in[1,\infty]$ is well known; a proof can be found in \cite[Lemma 4.2]{davies-lorentz-blowup} or \cite[Proposition 11.1]{lemarie2002}.). Approximating either $W$ or $\widetilde{\mathcal{T}}_{ijk}$ using Lemma \ref{pettis} (depending on which of $p,p'$ is finite), we deduce that the expression \eqref{besov-lorentz-equiv-meas} defines an equivalence class of measurable functions of $(s,t,x)\in(0,T)\times(0,\infty)\times\mathbb{R}^{n}$. This addresses the measurability issues in the following calculation. For $\phi\in\mathcal{S}(\mathbb{R}^{n})$ we compute
        \begin{equation*}
        \begin{aligned}
            \langle G{(\mathbb{P}\nabla\cdot W)}_{i}(t),\phi\rangle &= \int_{0}^{t}\langle e^{(t-s)\Delta}{(\mathbb{P}\nabla\cdot W(s))}_{i},\phi\rangle\,\mathrm{d}s \\
            &= -\int_{0}^{t}\langle W_{jk}(s),\mathbb{P}_{ij}\nabla_{k}(e^{(t-s)\Delta}\phi)\rangle\,\mathrm{d}s \\
            &= -\int_{0}^{t}\left(\int_{\mathbb{R}^{n}}[W_{jk}(s)](y)\left(\int_{\mathbb{R}^{n}}\nabla_{k}\mathcal{T}_{ij}(t-s,y-x)\phi(x)\,\mathrm{d}x\right)\,\mathrm{d}y\right)\,\mathrm{d}s \\
            &= \int_{0}^{t}\left(\int_{\mathbb{R}^{n}}\left(\int_{\mathbb{R}^{n}}\nabla_{k}\mathcal{T}_{ij}(t-s,x-y)[W_{jk}(s)](y)\,\mathrm{d}y\right)\phi(x)\,\mathrm{d}x\right)\,\mathrm{d}s \\
            &= \int_{\mathbb{R}^{n}}\left(\int_{0}^{t}\left(\int_{\mathbb{R}^{n}}\nabla_{k}\mathcal{T}_{ij}(t-s,x-y)[W_{jk}(s)](y)\,\mathrm{d}y\right)\mathrm{d}s\right)\phi(x)\,\mathrm{d}x,
        \end{aligned}
        \end{equation*}
        where we used the identity $\nabla\mathcal{T}(t,-x)=-\nabla\mathcal{T}(t,x)$ in the fourth line. The use of Fubini's theorem in the above calculation, and the assertion $G\mathbb{P}\nabla\cdot W\in L_{T}^{1}L^{p}(\mathbb{R}^{n};\mathbb{R}^{n})$, are justified by the estimate
        \begin{equation*}
        \begin{aligned}
            &\quad \int_{0}^{T}\int_{0}^{t}\langle|\nabla\mathcal{T}(t-s)|*|W(s)|,|\phi|\rangle\,\mathrm{d}s\,\mathrm{d}t \\
            &\leq \int_{0}^{T}\int_{0}^{t}{\||\nabla\mathcal{T}(t-s)|*|W(s)|\|}_{L^{p}(\mathbb{R}^{n})}{\|\phi\|}_{L^{p'}(\mathbb{R}^{n})}\,\mathrm{d}s\,\mathrm{d}t \\
            &\leq \int_{0}^{T}\int_{0}^{t}{\|\nabla\mathcal{T}(t-s)\|}_{L^{1}(\mathbb{R}^{n})}{\|W(s)\|}_{L^{p}(\mathbb{R}^{n})}{\|\phi\|}_{L^{p'}(\mathbb{R}^{n})}\,\mathrm{d}s\,\mathrm{d}t \\
            &\lesssim_{n} \int_{0}^{T}\left(\int_{s}^{T}{(t-s)}^{-1/2}\,\mathrm{d}t\right){\|W(s)\|}_{L^{p}(\mathbb{R}^{n})}{\|\phi\|}_{L^{p'}(\mathbb{R}^{n})}\,\mathrm{d}s,
        \end{aligned}
        \end{equation*}
        and measurability of the map $s\mapsto{\|W(s)\|}_{L^{p}(\mathbb{R}^{n})}$ comes from Lemma \ref{measurable-norms}.
    \end{enumerate}
\end{proof}

\end{document}